\newtheorem{theorem}{Theorem}[section]
\newtheorem{lemma}[theorem]{Lemma}
\newtheorem{proposition}[theorem]{Proposition}
\newtheorem{corollary}[theorem]{Corollary}
\theoremstyle{definition}
\theoremstyle{remark}
\newtheorem{remark}[theorem]{Remark}
\numberwithin{equation}{section}
\newcommand{\F}{\mathbb{F}}
\newcommand{\Z}{\mathbb{Z}}
\newcommand{\Q}{\mathbb{Q}}
\newcommand{\C}{\mathbb{C}}
\newcommand{\G}{\Gamma_n}
\newcommand{\GL}{\mathrm{GL}}
\newcommand{\SL}{\mathrm{SL}}
\newcommand{\PSL}{\mathrm{PSL}}
\newcommand{\Sp}{\mathrm{Sp}}
\newcommand{\PSp}{\mathrm{PSp}}
\newcommand{\calg}{{\mathcal G}}
\newcommand{\cha}{\mathrm{char} \hspace{.5pt}}
\newcommand{\abk}{\allowbreak}
\begin{document}

\title[Zariski density and computing in arithmetic groups]{Zariski
density and computing in arithmetic groups}

\author{A.~Detinko}

\address{School of Computer Science\\
University of St~Andrews\\
North Haugh\\
St~Andrews\\
KY16 9SX\\
UK}

\author{D.~L.~Flannery}
\address{
School of Mathematics, Statistics and Applied Mathematics\\
National University of Ireland, Galway\\
University Road\\
Galway\\
Ireland}
\thanks{
The first and second authors received support from the Irish Research
Council 
(grants `Mat-GpAlg' and `MatGroups') and Science Foundation Ireland
(grant 11/\abk RFP.1/\abk MTH/3212). The first author is also funded by 
a Marie Sk\l odowska-Curie Individual Fellowship grant under Horizon 2020
(EU 
Framework Programme for Research and Innovation).}
\author{A.~Hulpke}
\address{Department of Mathematics\\
Colorado State University\\
Fort Collins, CO 80523-1874}
\thanks{
The third author was supported 
by Simons Foundation Collaboration Grant~244502}

\subjclass[2010]{20H05,20B40}

\date{}

\begin{abstract}
For $n > 2$, let $\G$ denote either $\SL(n, \Z)$ or $\Sp(n, \Z)$.
We give a practical algorithm to compute the level of the maximal 
principal congruence subgroup in an arithmetic group $H\leq \G$. 
This forms the main component of our methods for computing with 
such arithmetic groups $H$. More generally, we provide 
algorithms for computing with Zariski dense groups in $\G$. 
We use our {\sf GAP} implementation of the algorithms to solve 
problems that have emerged recently for important classes of 
linear groups.
\end{abstract}

\maketitle

\section{Introduction}

This paper is the next phase in our ongoing project to build 
up a new area of computational group theory: computing with linear 
groups given by a finite set of generating matrices over an infinite 
field. Previously we established a uniform approach for handling such 
groups in a computer. This is based on the use of congruence 
homomorphisms, taking advantage of the residual finiteness of finitely 
generated linear groups: a realisation of the `method of finite 
approximation'~\cite{Recog}. 
We verified decidability, and then obtained efficient algorithms for solving 
problems such as testing finiteness and virtual solvability. We also implemented 
a suite of algorithms to perform extensive structural investigation of 
solvable-by-finite linear groups. 

Most finitely generated linear groups, however, are not virtually 
solvable, and computing with those groups is largely unexplored
territory. Obstacles include undecidability of certain algorithmic 
problems, complexity issues (e.g., growth of matrix entries), and a 
dearth of methods. In \cite{Arithm}, we initiated the development of 
practical algorithms for arithmetic subgroups of semisimple algebraic 
groups $\mathcal G$ defined over the rational field $\Q$. We were 
motivated by the pivotal role that these groups play throughout 
algebra and its applications, and the concomitant demand for 
practical techniques and software to work with them. 

At this stage we restrict attention to $\mathcal G$ possessing
the \emph{congruence subgroup property}: each arithmetic group 
$H \leq \mathcal{G}(\Z)$ contains the kernel of the congruence 
homomorphism on $\mathcal{G}(\Z)$ modulo some positive integer $m$, 
the so-called \emph{principal congruence subgroup (PCS) of level $m$}. 
Prominent examples are $\mathcal{G}(\Z)=\SL(n, \Z)$ 
and $\Sp(n, \Z)$ for $n > 2$ (see \cite{Bass}). The congruence 
subgroup property allows us to reduce much of the computing to the 
environment of matrix groups over finite rings; but we first need 
to know (the level of) a PCS in $H$. In \cite{Arithm} we showed 
that construction of a PCS in an arithmetic group $H\leq \SL(n, \Z)$ 
is decidable. As a consequence, this proves that other algorithmic 
questions (e.g., membership testing, orbit-stabilizer problems, 
analyzing subnormal structure) are decidable, and yields algorithms 
for their solution. 

The current paper gives a practical algorithm to compute a 
PCS in an arithmetic subgroup $H\leq \G = \SL(n, \Z)$ or $\Sp(n, \Z)$ 
for degrees $n > 2$. More precisely, we compute the \emph{level} 
$M = M(H)$ of $H$, i.e., the level of the unique maximal PCS in $H$. 
Knowing $M$, we can undertake further computation with $H$ (this 
subsumes all algorithms from \cite{Arithm}). 

In contrast to computing with a virtually solvable linear group, 
computing with an arithmetic group $H\leq \G$ entails reduction 
modulo ideals that may not be maximal. Moreover, we must consider 
images of $H$ modulo all primes. Fortunately, $H$ and $\G$ are 
congruent modulo $p$ for almost all primes $p$. This property holds
in a wider class, namely subgroups of $\G$ that are dense in the 
Zariski topology on $\SL(n,\C)$, respectively $\Sp(n,\C)$. Density 
is weaker than arithmeticity, easier to test, and indeed 
furnishes a preliminary step in arithmeticity testing (see \cite{Sarnak}
for justification of the significance of this problem). 
Dense non-arithmetic subgroups are called \emph{thin} matrix groups. 
If $H$ is dense (either arithmetic or thin),
then by the strong \mbox{approximation} theorem 
$H$ surjects onto $\SL(n, p)$, respectively $\Sp(n, p)$, modulo all 
but a finite number of primes $p$~\cite[p.~391]{LubotzkySegal}. We 
design effective algorithms to compute the set $\Pi(H)$ of these
primes for finitely generated $H \leq \G$ containing a 
transvection. As a by-product, we get a simple algorithm to test 
density of such groups (albeit for odd $n$ only if $\G=\SL(n,\Z)$). 
Computing $\Pi(H)$ when $H$ does not have a known transvection will 
be dealt with in a subsequent paper~\cite{DensityFurther}.
Our next major result shows that the 
algorithm to compute the level of the maximal PCS of an arithmetic 
subgroup also finds the minimal arithmetic overgroup 
$L$ of a finitely generated dense subgroup $H$ of $\G$. Algorithms for 
the arithmetic group $L$ (e.g., as in \cite{Arithm}) 
can thereby be used to study $H$.

When computing with arithmetic groups, the relevant congruence images 
are over finite rings $\Z_m := \Z/ m\Z$ (for virtually solvable groups,
the images are over finite fields). We prove some essential results 
about subgroups of $\GL(n,\Z_m)$ in Subsection~\ref{Subsection12}. These 
underlie Subsection~\ref{Subsection13}, wherein we present our algorithm 
to compute the level $M$ of an arithmetic group in $\G$. 
Section~\ref{Section2} is dedicated to density testing and computing 
$\Pi(H)$ for a finitely generated dense group $H\leq \G$. In 
Section~\ref{Section3}, we use our algorithms to solve  
computational problems that have recently emerged for important 
classes of groups. The experimental results demonstrate the efficiency 
of our algorithms. Finally, in Section~\ref{Miscellaneous} 
we discuss our {\sf GAP}~\cite{Gap} implementation of density testing 
algorithms, including those from \cite{Rivin1}.

\section{The level of an arithmetic subgroup}\label{Section1}

In this section we develop techniques for computing the level of
an arithmetic group in $\Gamma_n$.
\subsection{Setup}
We adhere to the following notation. 
Let $R$ be a commutative unital ring. The symplectic group of 
degree $n=2s$ over $R$ is
\[
\Sp(n,R)=\{ x\in \GL(n,R) \ | \ x J x^\top = J\}
\]
where 
\[
J={
\left(\renewcommand{\arraycolsep}{.17cm}\! 
\begin{array}{rr} 0_{s} & 1_{s}
\\
\vspace*{-9pt}& \\
-1_{s} & 0_{s}
\end{array} \! \right)}. 
\]
Notice that $\Sp(2,R) = \SL(2,R)$.
Let $t_{ij}(m)=1_n + \abk me_{ij}\in \SL(n,R)$,
where $e_{ij}$ has $1$ in position $(i,j)$ and zeros elsewhere.
Define
\[
E_{n,m} = \langle \hspace{.1mm} t_{ij}(m)\, :  \, i\neq j, \, 1 \leq
i, j \leq n \hspace{.1mm} \rangle
\]
if $\G = \SL(n, R)$, and
\begin{align*}
E_{n,m} = & \ \,  \{ t_{i,s+j} (m)\hspace{.75pt}
t_{j,s+i}(m), \hspace{.25pt} t_{s+i,j}(m)\hspace{1pt} t_{s+j,i}(m)
\; | \; 1\leq i < j\leq
s\}\\
& \ \ \cup \hspace{.5pt} \{ t_{i,s+i}(m), t_{s+i,i}(m) \; | \;
1\leq i\leq s\}
\end{align*}
if $\G = \Sp(2s, R)$. The $E_{n,m}$ are
\textit{elementary subgroups of $\, \G$ of level $m$}
(\cite[Section~1.1]{Arithm}, \cite[pp.~223--224]{HahnOMeara}). 
For $R=\abk \Z$ or $\Z_r$ we have $E_{n,1}=\SL(n,R)$ if 
$\Gamma_n = \SL(n,R)$ and $E_{n,1}=\Sp(n,R)$ if 
$\Gamma_n = \Sp(n,R)$. 

The reduction modulo $m$ map $\varphi_m$
on $R = \Z$ or $R= \Z_r$ 
extends entrywise to a homomorphism on 
$\GL(n,R)$, also denoted by $\varphi_m$.
This congruence homomorphism maps $\Gamma_n$ 
onto $\SL(n,\Z_m)$ or $\Sp(n,\Z_m)$ respectively.
For $\G= \SL(n,\Z)$ and $n> 2$, the normal closure 
$E_{n,m}^{\hspace{.2pt} \G}$ is the principal congruence subgroup 
(PCS) of level $m$, i.e., the kernel 
of $\varphi_m$ on $\Gamma_n$, denoted
$\Gamma_{n,m}$~\cite[Proposition~1.6]{Arithm}. Similarly,
$E_{n,m}^{\hspace{.2pt} \G}$ is the kernel
$\Gamma_{n,m}$ of $\varphi_m$ on $\G= \abk \Sp(n,\Z)$
when $n> 2$~\cite[Proposition~13.2]{Bass}. Let $H\leq \Gamma_n$.
As usual $\Pi := \Pi(H)$ is the set of primes $p$ 
such that $\varphi_p(H)\neq\varphi_p(\Gamma_n)$.
If $|\Gamma_n:H|$ is finite then $H$ contains some 
$\Gamma_{n,m}$~\cite{Bass}. Indeed, $H$ contains a unique maximal 
PCS; its level is defined to be the level $M = M(H)$ of $H$. 

\subsection{Decidability}

Let $n> 2$.
Decision problems for arithmetic groups $H$ in $\G = \abk \SL(n,\Z)$ were 
discussed in \cite[Section~3.1]{Arithm}. Here we cover $\G = \Sp(n,\Z)$
as well. 
\begin{lemma}\label{Lemma11}
Suppose that 
$\Gamma_{n,m}\leq H$.
If $k$ is coprime to $m$,
 then $\varphi_k(H) =\abk \varphi_k (\G)$.
\end{lemma}
\begin{proof}
Cf.~\cite[Remark~1.18]{Arithm}.
\end{proof}

Denote the set of prime divisors of $a \in \Z$ by $\pi(a)$.
\begin{corollary}\label{Pivspi}
If $H$ has level $M$ and $\Gamma_{n,m}\leq H$
then $\Pi \subseteq \pi(M) \subseteq \pi(m)$. 
\end{corollary}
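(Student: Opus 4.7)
The plan is to prove the two inclusions $\Pi \subseteq \pi(M)$ and $\pi(M) \subseteq \pi(m)$ separately; both are immediate once one unpacks the definitions, so the main job is to cite the right facts.

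For $\Pi \subseteq \pi(M)$, I would argue by contrapositive. Suppose $p$ is a prime with $p \notin \pi(M)$, i.e., $p$ is coprime to $M$. Since $H$ has level $M$, by definition the maximal PCS in $H$ is $\Gamma_{n,M}$, so in particular $\Gamma_{n,M} \leq H$. Now apply Lemma~\ref{Lemma11} with $M$ in the role of $m$ and $k = p$: because $\gcd(p, M) = 1$, we get $\varphi_p(H) = \varphi_p(\Gamma_n)$, hence $p \notin \Pi$. This gives the first containment.

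For $\pi(M) \subseteq \pi(m)$, the key fact is that the PCSs of $\Gamma_n$ are ordered by reverse divisibility of levels: $\Gamma_{n,a} \leq \Gamma_{n,b}$ iff $b \mid a$. By hypothesis $\Gamma_{n,m} \leq H$, and by maximality of $\Gamma_{n,M}$ among PCSs in $H$ (as recalled just before the corollary), we have $\Gamma_{n,m} \leq \Gamma_{n,M}$. Therefore $M \mid m$, which forces $\pi(M) \subseteq \pi(m)$.

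There is no real obstacle here; the only point that requires a moment's thought is confirming that the uniqueness/maximality of the PCS inside $H$ lets us compare $\Gamma_{n,m}$ with $\Gamma_{n,M}$ by inclusion (and hence by divisibility of levels), which is exactly the property of $M(H)$ invoked at the end of the Setup subsection. Both inclusions then follow in one line each.
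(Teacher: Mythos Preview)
Your argument is correct and matches the paper's intended reasoning: the corollary is stated without proof, as it follows immediately from Lemma~\ref{Lemma11} (for $\Pi\subseteq\pi(M)$) together with the maximality of $\Gamma_{n,M}$ among PCSs in $H$, which forces $M\mid m$ and hence $\pi(M)\subseteq\pi(m)$. There is nothing to add.
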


\begin{proposition}\label{Decidable}
Let $H\leq \G$ be arithmetic. Then computing the level of a PCS in $H$
is decidable.
\end{proposition}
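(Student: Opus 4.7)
The plan is to run a simple generate-and-test procedure: enumerate $m = 1, 2, 3, \ldots$, and for each $m$ decide whether $\Gamma_{n,m} \leq H$; return the first $m$ that succeeds. Termination is immediate from the arithmeticity of $H$, since $[\G : H]$ is finite and the congruence subgroup property (available for both $\SL(n,\Z)$ and $\Sp(n,\Z)$ with $n > 2$, by \cite[Proposition~1.6]{Arithm} and \cite[Proposition~13.2]{Bass}) guarantees that some PCS lies inside $H$.

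The test at level $m$ can be made effective by observing that $\Gamma_{n,m} \leq H$ is equivalent to $H = \varphi_m^{-1}(\varphi_m(H))$, and hence to the numerical condition
\[
[\G : H] \;=\; [\varphi_m(\G) : \varphi_m(H)].
\]
The right-hand index is computable inside the finite group $\varphi_m(\G)$ (namely $\SL(n,\Z_m)$ or $\Sp(n,\Z_m)$), since $\varphi_m(H)$ is generated by the images of the given generators of $H$. The left-hand index is finite by arithmeticity and can be computed by Todd--Coxeter coset enumeration, using a known finite presentation of $\G$ (Steinberg in the $\SL$ case, Behr in the $\Sp$ case) together with the generators of $H$; Todd--Coxeter is guaranteed to terminate precisely when the index is finite.

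The main obstacle is the Todd--Coxeter run itself: it is effective but provides no a priori bound on its cost, so it is only useful for establishing decidability, which is all that is claimed here. A more refined, practical algorithm avoiding blind coset enumeration in $\G$ is the subject of Subsection~\ref{Subsection13}. Beyond that, the symplectic case introduces no new conceptual difficulty compared to the $\SL$ case treated in \cite[Section~3.1]{Arithm}: one only needs the normal generation $\Gamma_{n,m} = E_{n,m}^{\G}$ supplied by \cite[Proposition~13.2]{Bass}, together with a finite presentation of $\Sp(n,\Z)$, in order to transport the argument wholesale.
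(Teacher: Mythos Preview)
Your argument is correct. The equivalence $\Gamma_{n,m}\le H \iff [\G:H]=[\varphi_m(\G):\varphi_m(H)]$ holds because $\varphi_m^{-1}(\varphi_m(H))=H\Gamma_{n,m}$, and with $c=[\G:H]$ computed by Todd--Coxeter the generate-and-test loop terminates by the congruence subgroup property.

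The paper takes a shorter route that avoids the search entirely. It also begins with Todd--Coxeter to obtain $c=|\G:H|$, but then observes that the core $H_{\G}$ has index dividing $m:=c!$ in $\G$, so every $x\in\G$ satisfies $x^{m}\in H_{\G}$. Applying this to the elementary generators gives $t_{ij}(1)^{m}=t_{ij}(m)\in H_{\G}$, hence $E_{n,m}\le H_{\G}$ and, by normality, $\Gamma_{n,m}=E_{n,m}^{\G}\le H_{\G}\le H$. Thus the paper exhibits a valid level $m=c!$ in one stroke. What your approach buys is that the first $m$ passing your test is in fact the level $M$ of $H$ (not merely some multiple), whereas the paper's $c!$ is typically enormous; what the paper's approach buys is an explicit a priori formula and a one-line proof once $c$ is known. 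Both proofs share the same essential ingredient---termination of Todd--Coxeter on a finite-index subgroup---and both defer practicality to Subsection~\ref{Subsection13}.
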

\begin{proof}
We can compute $c=|\G:H|$ by the Todd--Coxeter procedure. 
The core $H_{\G}$ is a normal subgroup of $\G$ contained in $H$, and
$|\G:H_{\G}|$ divides $m:= \abk c!$. So $x^m\in H_{\G}$ for all $x\in \G$.
Thus $E_{n,m}\leq H_{\G}$, and $\Gamma_{n,m}\leq H$.
\end{proof}
\begin{corollary}
\label{corollary15double}
If $H\leq \G$ is arithmetic then
testing membership of $g \in \G$ in $H$ is decidable.
\end{corollary}

Knowing the level $m$ of any PCS in $H$, we can determine $|\G:H|$
and the level $M$ of $H$.
Therefore, computing $M$ is equivalent to computing
$|\G:H|$. According to \cite[pp.~531--532]{GSI}, an arithmetic subgroup 
$H\leq \calg({\Z})$ of an algebraic $\Q$-group $\calg \leq \GL(n,\C)$ 
is \textit{given explicitly} if there is an effective way 
to test membership of elements of $\calg({\Z})$ in $H$, and we have 
an upper bound on $|\calg({\Z}) : H|$. 
By Proposition~\ref{Decidable} and  Corollary~\ref{corollary15double}, 
every arithmetic subgroup of $\calg = \SL(n,\C)$ or $\Sp(n,\C)$ is 
explicitly given. This guarantees decidability of 
key algorithmic 
problems for these groups, as in \cite{GSI} (see also 
\cite[Section~3.1]{Arithm}).

In practice, we would not compute $M$ or $|\G:H|$ as in the 
proof of Proposition~\ref{Decidable}: the runtime of the
Todd--Coxeter procedure may be arbitrarily large. 
Subsection~\ref{Subsection13} gives a practical method for
computing $M$. This requires extra results, to be 
presented in the next subsection. 

\subsection{Existence of supplements of congruence
subgroups over $\Z_m$}
\label{Subsection12}

For specified $m$, denote the kernel of 
the reduction modulo $r$ homomorphism 
$\varphi_{r}$ on 
$\varphi_{m}(\Gamma_n)=\SL(n,\Z_m)$ or 
$\Sp(n,\Z_m)$ by $K_{r}$.
Let $p$ be a prime. Our main objective in this subsection is to prove
the following theorem.
\begin{theorem}
\label{NonSplitLinear}
Let $a$, $n\ge 2$, and $G=\SL(n,\Z_{p^a})$ or $\Sp(2n,\Z_{p^a})$.
Then $K_{p^{a-1}}$ has a proper supplement in $G$ if and only if
$G$ is one of
\begin{equation}\label{Exceptions}
\SL(2,\Z_{4}), \quad \SL(2,\Z_{9}), \quad
\SL(3,\Z_{4}), \quad \SL(4,\Z_{4}).
\end{equation}
\end{theorem}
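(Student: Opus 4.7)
The plan is to reformulate the existence of a proper supplement in module-theoretic terms. Because $(1+p^{a-1}X)(1+p^{a-1}Y)\equiv 1+p^{a-1}(X+Y)\pmod{p^a}$, the group $K_{p^{a-1}}$ is elementary abelian of exponent $p$, and conjugation by $G$ identifies it, as a module for $\overline{G}:=G/K_{p^{a-1}}$, with the adjoint module $\mathfrak{sl}_n(\mathbb{F}_p)$ or $\mathfrak{sp}_{2n}(\mathbb{F}_p)$. If $H$ is a proper supplement, then $N:=H\cap K_{p^{a-1}}$ is a proper $\overline{G}$-submodule of $K_{p^{a-1}}$ (since $H$ surjects onto $\overline{G}$ and $K_{p^{a-1}}$ is abelian), and $H/N$ is a complement to $K_{p^{a-1}}/N$ in $G/N$, so the induced extension
\[
1 \to K_{p^{a-1}}/N \to G/N \to \overline{G} \to 1
\]
splits. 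Conversely, any proper submodule $N$ admitting such a splitting yields a proper supplement by pullback. The theorem therefore reduces to determining exactly when some proper $N\subsetneq K_{p^{a-1}}$ admits a split quotient extension.

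Next I would narrow the candidates for $N$ using the adjoint module's submodule lattice: $\mathfrak{sl}_n(\mathbb{F}_p)$ is irreducible when $p\nmid n$ and otherwise has only the scalar line as a proper nonzero submodule, while $\mathfrak{sp}_{2n}(\mathbb{F}_p)$ is irreducible for odd $p$ (with a small analogous modification at $p=2$). Only a short, explicit list of quotient modules therefore needs to be considered. The main technical step, and the chief obstacle, is then to prove non-splitness of the resulting extension in each non-exceptional case. My approach would be a direct generation argument in $G$: any subgroup mapping onto $\overline{G}$ contains lifts $h_{ij}$ of the elementary generators $t_{ij}(1)$; Chevalley-style commutator and power identities applied to the $h_{ij}$ produce elements of $K_{p^{a-1}}$ whose $\overline{G}$-module span equals all of $K_{p^{a-1}}$ provided enough root vectors and distinct indices are available, forcing $H=G$. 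Reducing to the case $a=2$ along the filtration $K_p\supset K_{p^2}\supset\cdots$ keeps this uniform.

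The four groups of \eqref{Exceptions} are precisely the cases where this generation argument degenerates. For $n=2$ the three-index commutator $[t_{ij}(1),t_{jk}(1)]=t_{ik}(1)$ is unavailable, and the replacement $\SL(2)$ identities cease to generate when $p\in\{2,3\}$. For $n\in\{3,4\}$ with $p=2$ the well-known nontrivial Schur multipliers of $\SL(3,2)\cong\PSL(2,7)$ and $\SL(4,2)\cong A_8$ produce honest splittings of the appropriate quotient extensions. To close the proof I would construct a proper supplement explicitly in each of these four groups --- for $\SL(2,\mathbb{Z}_4)$ via a lift of the commutator subgroup of $\SL(2,2)\cong S_3$, and for the remaining three via matrices realising the relevant central covering groups --- and compare its order with $|G|$ to confirm properness.
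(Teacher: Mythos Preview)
Your module-theoretic framework is correct and the route is genuinely different from the paper's. After the common reduction to $a=2$, the paper does not attempt your direct argument: it \emph{cites} results of Beisiegel and Weigel to the effect that $K_p$ lies in the Frattini subgroup of $G$ in the non-exceptional cases, verifies the four listed exceptions and the residual small symplectic cases ($\Sp(4,\Z_{p^2})$ for $p\le 3$, and $\Sp(6,\Z_4)$) by {\sf GAP} computation, and reduces $\Sp(4,\Z_{p^2})$ for odd $p>3$ to $\SL(2,\Z_{p^2})$ via a dedicated lemma (Lemma~\ref{lemmabuildup}). Your Chevalley-commutator generation argument would, if executed, reprove the cited Frattini results from scratch; that is feasible, but it is exactly the ``chief obstacle'' you name, and your sketch does not yet say why the $\overline G$-span of the discrepancies $[h_{ij},h_{jk}]h_{ik}^{-1}\in K_p$ must be all of $K_p$ rather than a proper submodule. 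The paper's approach buys brevity at the cost of self-containment; yours would be the reverse.

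There is, however, one genuine gap in your treatment of the exceptions. A nontrivial Schur multiplier of $\overline G$ certifies the existence of non-split central extensions of $\overline G$ by a \emph{trivial} module; it does not produce a splitting of the particular extension $1\to K_p\to G\to\overline G\to 1$, whose coefficient module is the adjoint, nor of any of its quotients. What you need for $\SL(3,\Z_4)$, for instance, is that the class of this extension vanishes in $H^2\bigl(\SL(3,2),\mathfrak{sl}_3(\F_2)\bigr)$, and the Schur multiplier (which concerns trivial coefficients) does not decide that. The paper sidesteps this entirely by verifying the four exceptions computationally; on your route you would need either the correct cohomological computation or an explicit supplement in each case, and the Schur-multiplier heuristic as stated does not supply one.
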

Part of the symplectic group case of Theorem~\ref{NonSplitLinear} 
is treated in~\cite{hulpkespgens}.

We need several preparatory lemmas.
\begin{lemma}
\label{KForm} Let $a\geq 2$.
\begin{itemize}
\item[{\rm (i)}] The kernel $K_{p^{a-1}}$ of
$\varphi_{p^{a-1}}\colon \SL(n,\Z_{p^a})\to\SL(n,\Z_{p^{a-1}})$ is
\begin{equation*}
\left\{1_n+p^{a-1}x\; \big| \; x\in \mathrm{Mat}(n,\{0,1,\ldots, p-1\}),
\, \mathrm{trace}(x) \equiv 0 \ \mathrm{mod}\  p \right\}.
\end{equation*}
\item[{\rm (ii)}] Multiplication in $K_{p^{a-1}}$ translates to matrix
addition in $\mathrm{Mat}(n,\Z_{p})\hspace{-2pt} :$
\[
(1_n+p^{a-1}x)(1_n+p^{a-1}y) = 1_n+p^{a-1}z
\]
where $z\equiv x+y$ mod $p$. In particular, $K_{p^{a-1}}$ is an
elementary abelian $p$-group.
\end{itemize}
\end{lemma}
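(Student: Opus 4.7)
The plan is to treat (i) and (ii) in that order, as (ii) is a direct computation once (i) has pinned down the shape of elements of $K_{p^{a-1}}$.

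For (i), I start from the observation that any element of $\SL(n,\Z_{p^a})$ that reduces to $1_n$ modulo $p^{a-1}$ must be of the form $1_n + p^{a-1}x$ for a matrix $x$. Since $p^{a-1}\cdot p = p^a \equiv 0$ in $\Z_{p^a}$, the matrix $x$ is well defined only modulo $p$, so we may (and do) take its entries from $\{0,1,\ldots,p-1\}$. The substantive content is the $\det = 1$ condition. I would expand
\[
\det(1_n + p^{a-1}x) = 1 + p^{a-1}\operatorname{trace}(x) + \sum_{k\geq 2} p^{k(a-1)} c_k(x),
\]
where $c_k(x)$ is the $k$-th elementary symmetric function in the eigenvalues, equivalently a sum of $k\times k$ principal minors of $x$. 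The key arithmetic input is that $a \geq 2$ gives $k(a-1) \geq 2(a-1) \geq a$ for $k\geq 2$, so every such term vanishes modulo $p^a$. Hence $\det(1_n + p^{a-1}x) \equiv 1 + p^{a-1}\operatorname{trace}(x)\pmod{p^a}$, and this equals $1$ in $\Z_{p^a}$ precisely when $\operatorname{trace}(x)\equiv 0 \pmod p$. This proves the claimed parameterisation.

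For (ii), the computation is immediate from the same $2(a-1)\geq a$ inequality:
\[
(1_n + p^{a-1}x)(1_n + p^{a-1}y) = 1_n + p^{a-1}(x+y) + p^{2(a-1)}xy \equiv 1_n + p^{a-1}(x+y) \pmod{p^a}.
\]
So multiplication in $K_{p^{a-1}}$ corresponds to addition in $\operatorname{Mat}(n,\Z_p)$, restricted to the additive subgroup of trace-$0$ matrices. Addition of trace-$0$ matrices is trace-$0$, so closure is automatic; commutativity and the fact that every non-identity element has order $p$ then identify $K_{p^{a-1}}$ as an elementary abelian $p$-group.

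The only step requiring any care is the truncation of the determinant expansion, and the main obstacle there is simply recording the inequality $k(a-1)\geq a$ for all $k\geq 2$ when $a\geq 2$; the rest is bookkeeping. No use is made of any symplectic structure, which is consistent with the statement applying to $\SL(n,\Z_{p^a})$; the symplectic analogue will be handled separately in the subsequent development.
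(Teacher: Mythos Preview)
Your proof is correct and follows essentially the same approach as the paper: both reduce (i) to the identity $\det(1_n+p^{a-1}x)\equiv 1+p^{a-1}\operatorname{tr}(x)\pmod{p^a}$ and declare (ii) to be an immediate computation. The only cosmetic difference is that the paper establishes the determinant identity by induction on $n$, whereas you use the expansion $\det(1_n+tA)=\sum_k t^k e_k(A)$ together with the bound $k(a-1)\ge a$ for $k\ge 2$; these are equivalent routes to the same truncation.
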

\begin{proof}
By induction on $n$,
$\mathrm{det} (1_n+p^{a-1}x) = 1 + p^{a-1}\mathrm{tr}(x)$,
so (i) follows. The other part is obvious.
\end{proof}

\begin{lemma}
\label{lemmanosupplement} 
Let $a\ge 3$ and $G=\SL(n,\Z_{p^a})$
for $n\geq 2$ or $G = \Sp(n,\Z_{p^a})$
 for $n\geq 4$. Then
$K=K_{p^{a-1}}\leq G$ is a central subgroup of 
 $L=K_{p^{a-2}}\leq G$
and has no proper supplement in $L$.
\end{lemma}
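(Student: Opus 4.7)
The plan has two pieces. First, \emph{centrality} follows from a direct expansion: for $u = 1 + p^{a-1}x \in K$ and $v = 1 + p^{a-2}y \in L$,
\[
uv - vu \;=\; p^{2a-3}(xy - yx),
\]
which vanishes mod $p^a$ since $a \geq 3$ gives $2a - 3 \geq a$. Second, \emph{no proper supplement} is equivalent to $K \subseteq \Phi(L)$, the Frattini subgroup: if $SK = L$ with $S$ proper, then $S$ sits in some maximal $M \leq L$, and $K \subseteq \Phi(L) \subseteq M$ would force $L = SK \subseteq M$, a contradiction. Since $L$ lies inside the kernel of reduction modulo $p$, it is a finite $p$-group, so $\Phi(L) = L^p[L,L]$.

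It therefore suffices to exhibit every generator of $K$ as a product of $p$th powers and commutators of elements of $L$. Under the identification of Lemma~\ref{KForm}, $K$ is elementary abelian and corresponds to a Lie subalgebra of $\mathrm{Mat}(n,\F_p)$ ($\mathfrak{sl}$ in the linear case, $\mathfrak{sp}$ in the symplectic case); it is spanned by nilpotent ``root'' generators $e$ satisfying $e^2 = 0$ (such as $e_{ij}$ with $i \neq j$, and their symplectic analogues built from $E_{n,m}$) together with Cartan generators $h$ (of the form $e_{ii} - e_{jj}$ in $\mathfrak{sl}$, or $e_{ii} - e_{s+i,s+i}$ in $\mathfrak{sp}$). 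For each nilpotent generator, the binomial expansion collapses to $(1 + p^{a-2}e)^p = 1 + p^{a-1}e$, placing these in $L^p$. For a Cartan generator, $(1 + p^{a-2}h)^p$ carries a correction $\binom{p}{2}p^{2a-4}h^2$ that vanishes mod $p^a$ whenever $p$ is odd or $a \geq 4$, again landing the generator in $L^p$.

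The remaining case is $p = 2$, $a = 3$, where squaring annihilates the Cartan classes; here I pass to commutators. For opposite root vectors $e_{\pm\alpha}$ with $[e_\alpha, e_{-\alpha}] = h_\alpha$, direct computation gives
\[
[\,1 + p^{a-2}e_\alpha,\; 1 + p^{a-2}e_{-\alpha}\,] \;\equiv\; 1 + p^{2a-4}\,h_\alpha \pmod{p^a},
\]
and the coincidence $2a - 4 = a - 1$ at $a = 3$ converts this to $1 + p^{a-1}h_\alpha$ exactly. Running over the opposite pairs $(e_{ij},e_{ji})$ in $\mathfrak{sl}$ and $(e_{i,s+i},e_{s+i,i})$ in $\mathfrak{sp}$ (available because $n \geq 4$) recovers every Cartan generator inside $[L,L]$. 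The main obstacle lies in this boundary case $p = 2$, $a = 3$: the squaring trick fails outright, and one must rely on the precise coincidence $2a - 4 = a - 1$ to place a single group-theoretic commutator at the correct level $p^{a-1}$, with the hypothesis $n \geq 4$ in the symplectic case supplying the opposite root exponentials needed inside $L$.
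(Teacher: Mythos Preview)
Your centrality argument matches the paper's. Your Frattini strategy is sound, and the commutator computation at the end is correct. However, there is a genuine gap in the middle step.

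You assume that $1_n + p^{a-2}h \in L$ for the Cartan element $h = e_{ii}-e_{jj}$ (resp.\ $h = e_{ii}-e_{s+i,s+i}$). But when $a=3$ this fails for \emph{every} prime $p$, not only $p=2$. In the $\SL$ case,
\[
\det\bigl(1_n + p^{a-2}h\bigr) \;=\; (1+p^{a-2})(1-p^{a-2}) \;=\; 1 - p^{2a-4},
\]
and for $a=3$ this is $1-p^2 \not\equiv 1 \pmod{p^3}$. In the $\Sp$ case one checks analogously that $(1_n+ph)J(1_n+ph)^\top = J + p^2(e_{s+i,i}-e_{i,s+i}) \neq J$ in $\mathrm{Mat}(n,\Z_{p^3})$. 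So for $a=3$ and odd $p$ you cannot take the $p$th power of $1_n+p^{a-2}h$ inside $L$, and your proof as written leaves the Cartan generators unaccounted for in that case. The repair is immediate: your own commutator identity
\[
[\,1_n+p^{a-2}e_\alpha,\;1_n+p^{a-2}e_{-\alpha}\,] \;\equiv\; 1_n + p^{2a-4}h_\alpha \pmod{p^a}
\]
is valid for all $p$, and at $a=3$ the exponent $2a-4=a-1$ puts $1_n+p^{a-1}h_\alpha$ into $[L,L]\le \Phi(L)$ regardless of parity. So the boundary case is $a=3$ for every prime, not just $p=2$. (Your argument for $a\ge 4$ is fine: then $2a-4\ge a$, so $1_n+p^{a-2}h$ does lie in $L$ and the binomial analysis goes through.)

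For comparison, the paper's proof is much shorter and avoids any case split on generators. It observes that $K$ equals the subgroup $L^p$ generated by $p$th powers (since $L/K$ has exponent $p$, one has $L^p\le K$; and conversely every relic of $K$ is hit by the $p$th power of a suitable lift in $L$). Then, given $L=KU$ with $K$ central of exponent $p$, each $l=ku\in L$ satisfies $l^p=k^pu^p=u^p$, whence $K=L^p\le U$ and $U=L$. Your route through $\Phi(L)=L^p[L,L]$ and an explicit root/Cartan basis is more hands-on and makes the structure visible, but it forces you to verify membership in $L$ for each auxiliary element---which is exactly where the slip occurred.
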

\begin{proof}
Since $2a-3\ge a$,
\begin{align*}
(1_n+p^{a-1}x)(1_n+p^{a-2}v)&=1_n+p^{a-1}x+p^{a-2}v\\
&= (1_n+p^{a-2}v)(1_n+p^{a-1}x)
\end{align*}
in $\mathrm{Mat}(n,\Z_{p^{a}})$.
Thus $K\leq Z(L)$.

The subgroup $K$ is generated by $p$th powers of elements of $L$.
If $L=KU$ then $K = L^p = K^p U^p\le U$; hence $U=L$.
\end{proof}

\begin{lemma}
\label{lemmabuildup}
Let $G=\Sp(4,\Z_{p^2})$ and $H=\Sp(2,\Z_{p^2})$, $p$ odd.
Denote by $C$, $D$ the kernel of $\varphi_p$ on $G$, $H$,
respectively. If there is a proper supplement of $C$ in $G$
then there is a proper supplement of $D$ in $H$.
\end{lemma}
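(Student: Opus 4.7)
The plan is to use a module-theoretic argument built on the natural embedding $\iota\colon H \hookrightarrow G$ given in block form by $g \mapsto \mathrm{diag}(g, (g^{\top})^{-1})$ relative to a splitting $\Z_{p^2}^4 = \Z_{p^2}^2 \oplus \Z_{p^2}^2$ into two symplectic planes; this embedding commutes with reduction mod $p$, so $\iota(D) = \iota(H) \cap C$. Given a proper supplement $U$ of $C$ in $G$, since $C$ is abelian (Lemma~\ref{KForm}) the subgroup $W := U \cap C$ is normalized by $UC = G$, hence is a $G/C$-submodule of $C \cong \mathfrak{sp}(4, \F_p)$ under the adjoint action. For odd $p$ the adjoint of $\Sp(4, \F_p)$ is irreducible ($\mathfrak{sp}(4)$ is simple outside characteristic $2$, and scalar matrices do not lie in $\mathfrak{sp}$), so $W \in \{0, C\}$; properness of $U$ excludes $W = C$, whence $U$ is a complement and $G = C \rtimes U$.

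Next set $S := U \cap \iota(H) C$; one checks that $S$ surjects onto $\iota(H) C / C \cong \Sp(2, \F_p)$ and meets $C$ trivially, so $\iota(H) C = C \rtimes S$. The key ingredient is an $S$-invariant decomposition $C = \iota(D) \oplus C'$. Writing $X = \bigl(\begin{smallmatrix} A & B \\ C_0 & -A^{\top} \end{smallmatrix}\bigr) \in \mathfrak{sp}(4, \F_p)$ with $B$ and $C_0$ symmetric, $\iota(D)$ corresponds to $\{A \in \mathfrak{sl}(2),\ B = C_0 = 0\}$. Since $p$ is odd, $\mathfrak{gl}(2) = \mathfrak{sl}(2) \oplus \F_p \cdot I_2$ as $\Sp(2, \F_p)$-modules, so $C' := \F_p \cdot I_2 \oplus \mathrm{Sym}^2_B \oplus \mathrm{Sym}^2_{C_0}$ is an $S$-invariant complement of $\iota(D)$ in $C$.

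Consider the projection $\pi \colon \iota(H) C \twoheadrightarrow \iota(H) C / C' = \iota(D) \rtimes S$. Its restriction to $\iota(H)$ has kernel $\iota(H) \cap C' \subseteq \iota(D) \cap C' = 0$, so $\pi|_{\iota(H)}$ is injective; by order count, $|\iota(H)| = p^4(p^2 - 1) = |\iota(D) \rtimes S|$, so $\pi|_{\iota(H)}$ is an isomorphism. Hence $\iota(D)$ has a complement in $\iota(H)$, which pulls back through $\iota$ to a proper supplement of $D$ in $H$. The main technical obstacle is verifying the two representation-theoretic ingredients -- irreducibility of the adjoint of $\Sp(4, \F_p)$ and the $S$-stable splitting $C = \iota(D) \oplus C'$ -- both of which genuinely use the odd-prime hypothesis (in characteristic $2$, scalars are absorbed into $\mathfrak{sl}$, and the adjoint is no longer irreducible).
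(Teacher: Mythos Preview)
Your argument is correct and follows a genuinely different, more structural route than the paper's proof. One small terminological slip: with the formula $g\mapsto\mathrm{diag}(g,(g^\top)^{-1})$ and the form $J=\bigl(\begin{smallmatrix}0&I\\-I&0\end{smallmatrix}\bigr)$, the two coordinate $2$-planes are \emph{Lagrangian}, not symplectic; the computations you actually carry out (the block description of $\mathfrak{sp}_4$ and the action $A\mapsto gAg^{-1}$, $B\mapsto gBg^{\top}$, $C_0\mapsto (g^{-1})^{\top}C_0g^{-1}$) are exactly those for the Lagrangian splitting, so nothing in the mathematics is affected.

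The paper instead embeds $H$ via a symplectic $2$-plane (the map $\lambda$ placing the $2\times 2$ matrix in rows and columns $1,3$), writes down an explicit $\lambda(H)$-invariant complement $N$ of $\lambda(D)$ in $C$, passes to the quotient map $\kappa\colon C\lambda(H)\to H$ with kernel $N$, and then argues by contradiction: if $\kappa(S\cap W)$ were all of $H$, a sequence of hand computations with commutators and conjugates of explicit relics forces $C\le S$. Your approach replaces that entire calculation by two representation-theoretic facts: irreducibility of the adjoint module $\mathfrak{sp}_4(\F_p)$ under $\Sp(4,\F_p)$ for odd $p$ (so any proper supplement of $C$ is already a \emph{complement}), and the $\SL(2,\F_p)$-splitting $\mathfrak{gl}_2=\mathfrak{sl}_2\oplus\F_p I_2$ (yielding the $S$-stable complement $C'$). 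This is cleaner and yields the stronger intermediate conclusion $U\cap C=1$; the cost is that irreducibility of the adjoint, while standard, is a nontrivial input (for type $C_2$ it holds precisely because $p\neq 2$ is a good prime). The paper's argument is more elementary and self-contained, relying only on $p$ being odd so that halving is legitimate in its relic computations.
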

\begin{proof}
The assignment
\[
\lambda:
{
\left(\begin{array}{cc}a&b\\
c&d\\
\end{array}\right)
\mapsto\left(\begin{array}{cccc}
a & 0 & b & 0\\
0 & 1 & 0 & 0\\
c & 0 & d & 0 \\
0 & 0 & 0 & 1
\end{array}\right)}
\]
defines an embedding $\lambda\colon H \to G$.
Clearly $\lambda(D)\le C$. Let $N$ be the subgroup
of $C$ whose elements are of the form $1_4+pr$ where
\begin{equation}
\label{symkmat2}
r={
\left(\! \!
\begin{array}{rcrr}
0    & \ \ v_1 &   0   & w_1 \\
-w_3 & \ \ v_2 &   w_1 & w_2 \\
0    & \ \ v_3 &   0   & w_3 \\
v_3  & \ \ v_4 &  -v_1 & -v_2
\end{array}
\right)}.
\end{equation}
Then $N$ complements $\lambda(D)$ in $C$, and is normalized 
by $\lambda(H)$. Therefore $N$ is a normal subgroup of 
$W=C \lambda(H)= N\lambda(H)$.

The natural epimorphism $\kappa\colon W\to H$ with kernel
$N$ maps $C$ to $D$.
Suppose that $S$ is a supplement of $C$ in $G$.
Then $\kappa(S\cap W)$ supplements $D$ in $H$; if it does
not do so properly then $\kappa(S\cap C) =D$.
Assuming this, we prove that $C\le S$, i.e., $S=G$.

Note that $S\cap C \unlhd G$. As Lemma~\ref{KForm}
indicates, we may view $C$ as an additive subspace of
$\mathrm{Mat}(4,\Z_{p})$, replacing $1_4+px$
by its {\em relic} $x$ with entries
in $\{0,1,\abk \ldots, \abk p-1\}$. Since $S\cap C$ surjects
onto $D$, it must contain a relic $a=r+\abk e_{31}$
with $r$ as in \eqref{symkmat2}. Let $b_1=1_4+e_{13}\in G$
and $a_1=\abk a-\abk a^{b_1}=\abk
e_{11}+e_{13}-e_{33}+v_{3}(e_{12}-e_{43})+
w_{3}(e_{23}+e_{14})$.
Then $a_2=\frac{1}{2}(a_1^{b_1}-a_1)=e_{13}\in S\cap C$.

Since $G$ contains a permutation matrix $t$
for $(1,2)(3,4)$, $e_{24}=e_{13}^t \in S\cap C$.
We construct more elements in $S\cap C$.
Let $d_1=1_4+e_{31}$, $d_2=1_4+e_{14}+e_{23}$, and
$d_3=1_4+e_{32}+e_{41}$. Then
\begin{itemize}
\item[] $\frac{1}{2}(a_2^{d_1}-a_2^{d_1^{-1}})=e_{11}-e_{33}$;
conjugating by $t$ gives $e_{22}-e_{44}$,

\item[] $\frac{1}{2}(a_2^{d_3}-a_2^{d_3^{-1}})=e_{12}-e_{43}$;
conjugating by $t$ gives $e_{21}-e_{34}$,

\item[] $\frac{1}{2}(-a_2^{d_1}+a_2^{d_1^{-1}}+
a_2^{d_1d_2}-a_2^{d_1^{-1}d_2})=e_{14}+e_{23}$,

\item[] $\frac{1}{2}(2a_2-a_2^{d_1}-a_2^{d_1^{-1}})=e_{31}$;
conjugating by $t$ gives $e_{42}$,

\item[] $-a_2+a_2^{d_1}+a_2^{d_3}-a_2^{d_1d_3}=e_{32}+e_{41}$.
\end{itemize}
Modulo $p$, the relics of $C$ are exactly those matrices $x$ such 
that $Jx$ is symmetric. Thus $C$ has $\Z_p$-dimension $10$.
It is readily checked that $e_{13}$, $e_{24}$, and the eight other 
elements of $S\cap C$ just listed are linearly independent.
So they comprise a basis of $C$. Therefore $C\le S$ as claimed.
\end{proof}

Everything is in place to prove Theorem~\ref{NonSplitLinear}.
\begin{proof}
If there were a proper supplement of $K_{p^{a-1}}$ in
$G$ then there would be a proper supplement of $K_{p^{a-1}}$
in $K_{p^{a-2}}$. So fix $a=2$ by 
Lemma~\ref{lemmanosupplement}.

We appeal to \cite{Beisiegel}, \cite[Theorem~1]{Weigel1},
and \cite{Weigel2}. If $G$ is not one of the groups in \eqref{Exceptions}, 
or if $G=\Sp(n,\Z_{p^2})$ for $n\geq 6$ but $G\neq \Sp(6, \Z_{4})$, then 
these results imply that $K_p$ lies in the Frattini
subgroup of $G$. Therefore $G$ does not have a proper supplement. 

A standard {\sf GAP} computation reveals that if $G$ is one of 
the groups in \eqref{Exceptions} then $K_p$ is properly supplemented.

Since $\Sp(2,R) = \SL(2,R)$, it remains to prove non-supplementation 
of $K_p$ in $\Sp(6,\Z_{4})$ and $\Sp(4,\Z_{p^2})$. The latter  
follows from Lemma~\ref{lemmabuildup} for $p>3$; the other parts 
may be verified by {\sf GAP} computations.
\end{proof}

\subsection{Computing the level}
\label{Subsection13}

We now develop an algorithm to compute the level $M$
of an arithmetic group $H\leq \G$, provided that the set 
$\pi(M)$ of primes dividing $M$ is known. To fulfil this requirement,
we determine the precise relationship between $\pi(M)$ and $\Pi(H)$.

\subsubsection{}
By Corollary~\ref{Pivspi}, $\Pi(H)\subseteq \pi(M)$. We will prove 
conversely that the odd part of $\pi(M)$ coincides with $\Pi(H)$. 
Furthermore, we show how to decide whether $M$ is even.

Below, $\Sigma$ stands for either $\SL$ or $\Sp$ 
(if $\Sigma = \Sp$ then of course the degree is even).
\begin{lemma}
\label{solradlemma}
Let $n\ge 3$, $p$ be a prime, $a\geq 1$, and $G=\Sigma(n,\Z_{p^a})$. 
Every proper normal subgroup of $G$ lies in the solvable radical 
$R$ of $G$, unless $G=$ $\Sp(4,\Z_{2^a})$, which has a subgroup of 
index $2$ containing $R= \ker \, \varphi_2$ (the only proper normal 
subgroup of $G$ not in $R$).
\end{lemma}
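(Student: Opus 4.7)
The plan is to reduce modulo $p$ and exploit the restricted normal subgroup structure of the finite classical group $\bar G:=\Sigma(n,\F_p)$. Set $K:=\ker\varphi_p$; since $|K|$ is a power of $p$, $K$ is a solvable $p$-group, so $K\le R$. For a proper $N\lhd G$ it thus suffices to control $\varphi_p(N)\lhd\bar G$.

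In the non-exceptional cases---namely $\SL(n,\F_p)$ for $n\ge 3$ or $\Sp(n,\F_p)$ for $n\ge 4$ with $(n,p)\ne(4,2)$---standard structure theorems for finite classical groups give that $\bar G$ is perfect and its only proper normal subgroups lie in $Z(\bar G)$, which equals the solvable radical of $\bar G$ (since $\bar G/Z(\bar G)$ is simple). If $\varphi_p(N)\le Z(\bar G)$, then $N\le\varphi_p^{-1}(Z(\bar G))$, an abelian-by-$p$-group, hence solvable, so $N\le R$. If instead $\varphi_p(N)=\bar G$, then $G=NK$, so $G/N$ is a quotient of the $p$-group $K$; but $G$ itself is perfect, because for $n\ge 3$ each transvection satisfies $t_{ij}(\lambda)=[t_{ik}(\lambda),t_{kj}(1)]$ for a third index $k$ (and the analogous commutator identities hold for the $\Sp$-root elements in $E_{n,m}$), so $G/N$ is a perfect $p$-group and hence trivial. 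This forces $N=G$, contradicting properness.

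For the exceptional case $G=\Sp(4,\Z_{2^a})$, $\bar G\cong S_6$ has normal subgroup lattice $\{1,A_6,S_6\}$ and trivial centre, so $R=\varphi_2^{-1}(1)=K$. The preimage $L:=\varphi_2^{-1}(A_6)$ is a normal subgroup of $G$ of index $2$ containing $R$, proving existence. For the uniqueness assertion, any proper $N\lhd G$ with $N\not\le R$ has $\varphi_2(N)\in\{A_6,S_6\}$, and the goal reduces to showing $K\le N$, whence $N=\varphi_2^{-1}(\varphi_2(N))\in\{L,G\}$ and properness forces $N=L$.

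The main obstacle is this final containment $K\le N$ in the exceptional case. One route is to descend the filtration $K=K_2\ge K_4\ge\cdots\ge K_{2^{a-1}}\ge 1$ step by step, using Theorem~\ref{NonSplitLinear} (which forbids proper supplements of $K_{2^{a-1}}$ in $\Sp(4,\Z_{2^a})$, a group absent from the list \eqref{Exceptions}) together with an analysis of the $A_6$-module structure of each successive elementary abelian layer $K_{2^b}/K_{2^{b+1}}$ to rule out proper $G$-invariant complements of $N\cap K_{2^b}$. A cleaner alternative is to appeal to the Klingenberg--Bass sandwich classification of normal subgroups of $\Sigma(n,R)$ over the local ring $R=\Z_{2^a}$, which directly restricts the shape of any normal subgroup of $G$ to the form $E(n,(2^b))\le N\le\varphi_{2^b}^{-1}(Z(\Sp(4,\Z_{2^b})))$ for some $b$, reducing the problem to inspection of a short list of candidates.
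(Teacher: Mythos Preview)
Your treatment of the non-exceptional cases is correct and takes a genuinely different route from the paper. The paper argues by induction on $a$: if $N\not\subseteq R$ then $\varphi_{p^{a-1}}(N)$ cannot lie in the radical of $\Sigma(n,\Z_{p^{a-1}})$, hence by induction it equals all of $\Sigma(n,\Z_{p^{a-1}})$; thus $N$ supplements $K_{p^{a-1}}$, and Theorem~\ref{NonSplitLinear} forces $N=G$ except possibly for $\SL(3,\Z_4)$ and $\SL(4,\Z_4)$, which are then dispatched by a {\sf GAP} computation. Your argument bypasses both Theorem~\ref{NonSplitLinear} and the computer check: once $G$ is perfect, $G/N$ being a $p$-group immediately gives $N=G$, and this covers $\SL(3,\Z_4)$ and $\SL(4,\Z_4)$ uniformly. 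The trade-off is that your justification of perfectness for $\Sp$ is understated. The identity $t_{ij}(\lambda)=[t_{ik}(\lambda),t_{kj}(1)]$ has a direct analogue only for the \emph{short} root elements of $C_s$ when $s\ge 3$; for long root elements one needs the Chevalley relation $[x_{2e_j}(a),x_{e_i-e_j}(b)]=x_{e_i+e_j}(\pm ab)\,x_{2e_i}(\pm ab^2)$, and for $\Sp(4,\Z_{p^a})$ with $p$ odd one relies on $2$ being a unit. These relations do yield perfectness in every case you need, but ``the analogous commutator identities hold'' hides the case analysis.

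For $\Sp(4,\Z_{2^a})$ you correctly isolate the residual obstacle ($K\le N$) and sketch two plausible attacks without executing either. The paper is no more forthcoming here, declaring the remainder ``a straightforward exercise'', so on this point your proposal is not less complete than the paper's own proof. Your first route is essentially how the paper's inductive scheme would extend: by induction $\varphi_{2^{a-1}}(N)$ is either contained in the radical (impossible since $\varphi_2(N)\ne 1$) or equals the index-$2$ subgroup or the whole group; Theorem~\ref{NonSplitLinear} then applies since $\Sp(4,\Z_{2^a})$ is absent from \eqref{Exceptions}. What remains is to rule out a proper supplement of $K_{2^{a-1}}$ inside $L=\varphi_2^{-1}(A_6)$, which amounts to showing $L$ is perfect; this is the ``module-structure'' step you allude to and is indeed routine once set up.
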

\begin{proof}
It is easily seen that $R$ is the full preimage in $G$ of the center 
of $\Sigma(n,p)$ under $\varphi_p$. If $G=\Sp(4,\Z_{2^a})$ then 
$G/R\cong \mathrm{Sym}(6)$. Excluding this case, $G/R$ is simple. 
By Theorem~\ref{NonSplitLinear} and induction, if 
$N\not \subseteq R$ is a proper normal subgroup of $G$ then $n=3$ or 
$4$ and $p=a=2$. 
{\sf GAP} computations confirm that such an $N$ does not exist in 
$\SL(3,\Z_4)$ or $\SL(4,\Z_4)$. The remainder of the proof is consigned
as a straightforward exercise. 
\end{proof}

We thank Derek Holt for sharing the next lemma and its proof with us. 
Recall that a {\em section} of a group $G$ is a quotient of 
a subgroup of $G$.
\begin{lemma} {\rm (D.~F.~Holt.)}
\label{sectionthensub}
Let $S$ be a finite nonabelian simple group that is not a section of 
$\PSp(4,2)$, i.e., $S$ is not isomorphic to $\mathrm{Alt}(5)$ or 
$\mathrm{Alt}(6)$. Suppose that $S$ is a section of a finite classical 
group $G$ of degree $n$ in characteristic $p$.

Then there exists a finite classical group $\hat G$ of degree
$n$ in characteristic $p$, or of degree less than $n$, such that 
a subgroup of $\hat G/Z(\hat G)$ is isomorphic to $S$. 
\end{lemma}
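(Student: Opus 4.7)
The plan is to argue by induction on $n$, the degree of $G$. Write $S\cong K/N$ with $N\trianglelefteq K\leq G$, and let $V=\F_q^n$ be the natural module for $G$ (with $q$ a power of $p$). First I would dispose of the case where $K$ acts reducibly on $V$: then $K$ is contained in the stabilizer of a proper nonzero subspace $W$, and modulo a unipotent (hence soluble) normal subgroup $K$ embeds into a product of two classical groups acting on $W$ and $V/W$, each of degree strictly less than $n$ and in characteristic $p$. Since $S$ is nonabelian simple, $S$ is a section of one of these smaller classical groups, and the inductive hypothesis supplies the required $\hat G$.

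If $K$ acts irreducibly, I would appeal to Aschbacher's theorem to place $K$ in one of the geometric classes $\mathcal{C}_2,\ldots,\mathcal{C}_8$ of maximal subgroups of $G$, or in the almost-simple class $\mathcal{S}$. Each geometric class identifies a normal subgroup of $K$ whose quotient is built out of classical groups of strictly smaller degree in characteristic $p$ (possibly defined over an extension field of $\F_p$, but of smaller degree), together with at most a symmetric-group factor $\mathrm{Sym}(t)$ with $t\leq n$: for example, in the imprimitive class $\mathcal{C}_2$ one has $K\leq (\GL(m,q)\wr\mathrm{Sym}(t))\cap G$ with $mt=n$ and $m<n$, and the tensor, subfield, and extraspecial classes are treated analogously. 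Sections $S$ arising from the smaller classical-group factors are dispatched by induction. A section $S\cong\mathrm{Alt}(t)$ that comes from an $\mathrm{Sym}(t)$-factor is realised inside $\PSL(t-1,p)$ or $\PSL(t-2,p)$ via the deleted permutation module; since $t\leq n$, for $t\geq 7$ this yields a $\hat G$ of degree $<n$ in characteristic $p$, while the residual cases $t\in\{5,6\}$ produce exactly the excluded exceptions $\mathrm{Alt}(5)$ and $\mathrm{Alt}(6)$.

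In the almost-simple case $K\in\mathcal{S}$, the group $K$ has a unique nonabelian composition factor, namely its socle $T$, so $S\cong T\hookrightarrow K\leq G$; because $S$ is nonabelian simple, $S\cap Z(G)\leq Z(S)=1$, and $S$ embeds into $G/Z(G)$, so we may take $\hat G=G$. The main obstacle I anticipate will be the careful bookkeeping in the $\mathcal{C}_6$ class (normalizers of extraspecial $r$-groups) together with a short list of low-rank exceptional configurations, where the only non-inductive nonabelian simple sections that appear are copies of $\mathrm{Alt}(5)$ and $\mathrm{Alt}(6)$ sitting inside $\PSp(4,2)\cong\mathrm{Sym}(6)$; by hypothesis these are excluded from $S$, and ruling them out is exactly what makes the reduction close.
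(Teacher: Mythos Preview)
Your strategy is the same as the paper's: induction combined with Aschbacher's theorem, with the $\mathrm{C}_6$ class producing the $\PSp(4,2)$ exceptions. The overall architecture is right, but there is one real gap and one smaller oversight.

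\textbf{The real gap: classes $\mathrm{C}_5$ and $\mathrm{C}_8$.} Your blanket claim that ``each geometric class identifies \ldots\ classical groups of strictly smaller degree'' is false for the subfield class $\mathrm{C}_5$ and the classical-subgroup class $\mathrm{C}_8$. In both cases the ambient classical group has the \emph{same} degree $n$ (over a smaller field, or of a different type), so your induction on $n$ alone does not apply, and you cannot simply take the smaller group as $\hat G$ because $S$ is still only a section of it, not a subgroup. The paper closes this by setting up a minimal counterexample with $n$ minimal and then $|G|$ minimal for that $n$; the $\mathrm{C}_5$ and $\mathrm{C}_8$ reductions then contradict minimality of $|G|$. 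You need either that secondary induction on $|G|$, or an explicit iteration argument.

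\textbf{The smaller oversight: sections of $\mathrm{Sym}(t)$.} You only treat the case $S\cong\mathrm{Alt}(t)$ coming from a $\mathrm{Sym}(t)$ factor, but $S$ could be a proper nonabelian simple section of $\mathrm{Alt}(t)$ (for instance $\mathrm{Alt}(5)$ inside $\mathrm{Alt}(7)$ --- though that particular one is excluded, larger $t$ give non-excluded examples). The paper avoids this case-split entirely: since $\mathrm{Sym}(t)$ has a faithful representation of degree $t-1$ in any characteristic, every section of $\mathrm{Sym}(t)$ is a section of a classical group of degree $t-1\le n-1$, and the induction on $n$ finishes it uniformly. Also note that the $K=G$ case needs to be separated out before invoking Aschbacher (which classifies proper subgroups); the paper does this explicitly, handling $O_4^+(q)$ as a side case.
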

\begin{proof}
Suppose that $G$ is a counterexample with $n$ minimal and $|G|$ minimal 
for this $n$, with $S$ a quotient of $H\le G$ and $|H|$ minimal as well. 
Since $S$ is simple, it must be a section of $G/Z(G)$. We may therefore 
assume that $Z(G)\le H$. When $H=G$, the non-solvable group $G/Z(G)$ 
is either simple (then the lemma holds for 
$\hat G=G$); or, if it is $O^+_4(q)$, a direct product of two 
copies of $\PSL(2,q)$ (then we can take $\hat G=\PSL(2,q)$). 
So suppose that $H\not=G$. We apply Aschbacher's theorem~\cite{Aschbacher84} 
to $H$.

If $H$ is in class $\mathrm{C}_1$, $\mathrm{C}_3$, or $\mathrm{C}_4$, 
then $S$ is a section of a classical group of smaller degree in 
characteristic $p$, contrary to the minimality of $n$.
If $H\in \mathrm{C}_2\cup \mathrm{C}_7$ then either the same is true 
or $S$ is a section of $\mathrm{Sym}(n)$ with $n\ge 5$. However, 
$\mathrm{Sym}(n)$ has a faithful representation of degree $n-1$ (in any 
characteristic).

If $H\in \mathrm{C}_5\cup \mathrm{C}_8$ then $S$ is a 
section of a smaller classical group of degree 
$n$ in characteristic $p$, contradicting minimality of $|G|$.

If $H\in \mathrm{C}_6$ then $n=r^k>2$ is an odd prime power and 
$P$ is a section of $\PSp(2k,r)$. Unless $k=r=2$ we have $2k<n$.
In the remaining case $k=\abk r=\abk 2$, $S$ would have to be a 
section of $\PSp(4,2)$; which was ruled out from the beginning.
\end{proof}

\begin{lemma}
\label{noncommfactor}
Let $n\ge 3$ and $p$, $q$ be distinct primes.
If $P=\mathrm{P}\Sigma(n,q)$ is a section of $\, \Sigma(n,p)$,
then $P=\abk \mathrm{PSL}(3,2)$ or $\mathrm{PSp}(4,2)$.
\end{lemma}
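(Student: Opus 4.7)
The plan is to feed $P$ into Lemma~\ref{sectionthensub} and then combine Landazuri--Seitz lower bounds on cross-characteristic faithful projective representations with a small amount of order-divisibility analysis to pin $P$ down.

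I would first reduce to the case that $P$ is a nonabelian finite simple group, not isomorphic to $\mathrm{Alt}(5)$ or $\mathrm{Alt}(6)$. For $n\ge 3$, $\mathrm{P}\Sigma(n,q)$ is simple except for $\PSp(4,2)\cong\mathrm{Sym}(6)$, which is one of the two allowed conclusions; and no $\mathrm{P}\Sigma(n,q)$ with $n\ge 3$ has order $60$ or $360$, so $P$ would be neither $\mathrm{Alt}(5)$ nor $\mathrm{Alt}(6)$.

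I would then apply Lemma~\ref{sectionthensub} to $P$ as a section of $\Sigma(n,p)$, embedding $P$ into $\hat G/Z(\hat G)$ for a finite classical group $\hat G$ either of degree $n$ in characteristic $p$ or of degree strictly less than $n$. Either way $P$ has a faithful projective representation of degree at most $n$ over some finite field. The natural module realises the minimum faithful projective degree for a classical simple group of degree $n\ge 3$, so the case ``degree $<n$ in defining characteristic $q$'' is ruled out. In any cross characteristic the Landazuri--Seitz lower bound for $P=\mathrm{P}\Sigma(n,q)$ grows polynomially in $q$ of degree roughly $n-1$ (linear case) or $n/2$ (symplectic case), so the constraint ``$\le n$'' would force $(n,q,\Sigma)$ into a short finite list whose only survivors are $\PSL(3,2)$, $\PSL(4,2)\cong\mathrm{Alt}(8)$, $\PSp(6,2)$, and $\PSp(4,3)$.

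Of these, $\mathrm{Alt}(8)$ and $\PSp(6,2)$ are excluded immediately because their minimum faithful cross-characteristic projective degree is $7$, exceeding $n=4$ and $n=6$ respectively. The delicate case will be $\PSp(4,3)$, whose Weil representation realises the Landazuri--Seitz bound exactly at degree $n=4$. I would close this case by combining a $3$-part analysis of $|\Sp(4,p)|$ (which restricts candidate primes to $p\equiv\pm 1\pmod{9}$) with a second application of Lemma~\ref{sectionthensub}, reducing a hypothetical embedding of $\PSp(4,3)$ into $\Sp(4,p)/Z$ to a classical group of strictly smaller degree in which no $4$-dimensional Weil module can arise. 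The $\PSp(4,3)$ exclusion is expected to be the main obstacle, since Landazuri--Seitz is tight there and neither bare order divisibility nor a single invocation of Lemma~\ref{sectionthensub} is likely to suffice on its own.
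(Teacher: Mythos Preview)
Your overall strategy matches the paper's: apply Lemma~\ref{sectionthensub}, then invoke the Landazuri--Seitz bounds together with the fact that the natural module realises the minimal faithful projective degree in defining characteristic. Your reduction to a short exceptional list and the immediate exclusion of $\PSL(4,2)$ and $\PSp(6,2)$ are fine (the paper's bounds actually exclude these two before they ever make it onto a list, but that is cosmetic).

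The genuine divergence, and the gap, is in your handling of $\PSp(4,3)$. The paper dispatches this case in one line by inspecting the maximal-subgroup tables for $\PSp(4,q)$ in \cite{braydougalholt}. Your proposed substitute does not close the case as written. The $3$-part condition $p\equiv\pm 1\pmod 9$ is correct but leaves infinitely many primes, so it cannot finish on its own, and you do not explain how the restriction interacts with the rest of the argument. More seriously, a ``second application of Lemma~\ref{sectionthensub}'' does not force a degree drop: the lemma's conclusion explicitly permits $\hat G$ of the \emph{same} degree $n$ in characteristic $p$, which is precisely the configuration you are trying to eliminate. To salvage your approach you would have to either re-run the Aschbacher case analysis for a proper subgroup of $\PSp(4,p)$ by hand (taking care with classes $\mathrm{C}_8$ and $\mathcal{S}$, which do not automatically reduce the degree), or show directly that the $4$-dimensional Weil representation of $\Sp(4,3)$ has Frobenius--Schur indicator $0$ and hence preserves no symplectic form. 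Either completion is feasible, but neither is the argument you sketched; the paper sidesteps the whole issue with the table lookup.
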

\begin{proof}
Suppose that $P=\mathrm{P}\Sigma(n,q)$ is a section of the classical group
$G=\Sigma(n,p)$. By Lemma~\ref{sectionthensub}, $P$ will be
isomorphic to a subgroup of $\hat G/Z(\hat G)$ for a classical group $\hat
G$ of degree less than $n$, or of degree $n$ and in the same characteristic 
$p$ as $G$. This implies that $P$ has a faithful projective representation 
$\rho$ of degree less than $n$, or of degree $n$ in characteristic $p\not= q$.

Let $\Sigma = \SL$. By \cite[p.~419]{landazuriseitz}, the smallest
degree of a non-trivial projective representation of $\PSL(n,q)$ 
in characteristic $p\not=q$ and for $(n,q)\neq (3,2)$ is at least 
$q^{n-1}-1>n$. In characteristic $q$, the minimal degree is 
$n$ by \cite[p.~201]{kleidmanliebeck}. So the existence of $\rho$ 
disposes of this option.

Now let $\Sigma = \Sp$.
The same references as above give the smallest degree $d_p(n,q)$ of
a faithful projective representation  of $\PSp(n,q)$ in characteristic $q$ as
$d_q(n,p)=n$, in coprime characteristic $p\not=q$ as
$d_p(n,q)=\frac{1}{2}(q^{\frac{n}{2}}-1)$ for odd
$q$, $d_p(n,2)=2^{\frac{n}{2}-2}(2^{\frac{n}{2}-1}-1)$ for $n>6$, and
$d_p(6,2)=7>6$. Unless $(n,q)=(4,3)$, the existence of $\rho$ once more gives 
a contradiction.

The only remaining case is $P=\PSp(4,3)$ as
a section of $\PSp(4,q)$ for $q\not=3$.  Inspection of the maximal subgroups
of $\PSp(4,q)$ (see \cite[Tables~8.12 and 8.13]{braydougalholt}) shows that 
this is impossible.
\end{proof}

By happenstance the exceptions of Lemma~\ref{noncommfactor}
are close to those of Theorem~\ref{NonSplitLinear} and 
Lemma~\ref{solradlemma} (degree at most $4$ and characteristic
a power of $2$). So the prime $p=2$ will be treated as exceptional if the 
degree $n$ is $3$ or $4$. If $n>4$ or $p$ is an odd prime then we call 
the pair $(n,p)$ {\em unexceptional}.
\begin{lemma}
\label{kernelcor}
Let $n\ge 3$, $a\ge 1$, $q>1$, and $p$ be a prime not dividing $q$
such that $(n,p)$ is unexceptional.
Suppose that $U\le\Sigma(n,\Z_{p^aq})=\Sigma(n,\Z_{p^a})\times\Sigma(n,\Z_q)$ 
maps onto $\Sigma(n,\Z_{p^a})$ under natural projection of the
entire direct product onto its first factor. 
Then $U$ contains $\Sigma(n,\Z_{p^a})$.
\end{lemma}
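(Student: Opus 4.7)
The plan is to combine a Goursat-type argument with the composition-factor information provided by Lemmas~\ref{solradlemma} and~\ref{noncommfactor}. Write $\pi_1,\pi_2$ for the two coordinate projections of $\Sigma(n,\Z_{p^a})\times\Sigma(n,\Z_q)$, set $K=U\cap\ker\pi_2=U\cap(\Sigma(n,\Z_{p^a})\times\{1\})$, and let $A=\pi_1(K)$. Since $\pi_1(U)=\Sigma(n,\Z_{p^a})$ normalizes $K$, we have $A\trianglelefteq\Sigma(n,\Z_{p^a})$, and the conclusion $\Sigma(n,\Z_{p^a})\le U$ is equivalent to the equality $A=\Sigma(n,\Z_{p^a})$. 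So I would argue by contradiction and assume $A$ is proper.

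Goursat's lemma then produces a normal subgroup $B$ of $\pi_2(U)\le\Sigma(n,\Z_q)$ together with an isomorphism $\Sigma(n,\Z_{p^a})/A\cong\pi_2(U)/B$; in particular $\Sigma(n,\Z_{p^a})/A$ is a section of $\Sigma(n,\Z_q)$. Because $(n,p)$ is unexceptional, Lemma~\ref{solradlemma} forces $A$ to sit inside the solvable radical $R$ of $\Sigma(n,\Z_{p^a})$, so the nonabelian simple group $\Sigma(n,\Z_{p^a})/R\cong\mathrm{P}\Sigma(n,p)$ is a quotient of $\Sigma(n,\Z_{p^a})/A$, and hence a composition factor of $\Sigma(n,\Z_q)$.

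Next I would apply the Chinese remainder decomposition $\Sigma(n,\Z_q)\cong\prod_{\ell\mid q}\Sigma(n,\Z_{\ell^{b_\ell}})$. Iterating Lemma~\ref{KForm}, each successive congruence kernel inside $\Sigma(n,\Z_{\ell^{b_\ell}})$ is an elementary abelian $\ell$-group, so the only nonabelian composition factors contributed by that factor are the composition factors of $\Sigma(n,\ell)$, namely $\mathrm{P}\Sigma(n,\ell)$. Consequently $\mathrm{P}\Sigma(n,p)\cong\mathrm{P}\Sigma(n,\ell)$ for some prime $\ell\mid q$, and by hypothesis $\ell\ne p$.

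The main obstacle is precisely the small-degree coincidences between classical simple groups in distinct characteristics, and this is exactly what Lemma~\ref{noncommfactor} is designed to rule out. Applying it (with the roles of $p$ and $q$ interchanged) to the isomorphism $\mathrm{P}\Sigma(n,p)\cong\mathrm{P}\Sigma(n,\ell)$ forces $\mathrm{P}\Sigma(n,p)$ to be $\PSL(3,2)$ or $\PSp(4,2)$, each of which demands $n\in\{3,4\}$ and $p=2$, contradicting unexceptionality of $(n,p)$. Therefore $A=\Sigma(n,\Z_{p^a})$, and $U$ contains $\Sigma(n,\Z_{p^a})$ as desired.
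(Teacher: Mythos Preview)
Your approach is essentially the paper's: Goursat, then Lemma~\ref{solradlemma} to push the proper normal subgroup into the radical, then Lemma~\ref{noncommfactor} for the contradiction; the paper first treats prime-power $q$ and inducts, while you handle composite $q$ in one pass via the Chinese remainder theorem, which is only a cosmetic difference.

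One small slip to tighten: from Goursat you obtain that $\mathrm{P}\Sigma(n,p)$ is a \emph{section} of $\Sigma(n,\Z_q)$, not a composition factor (a nonabelian simple section of a finite group need not be a composition factor, e.g.\ $\mathrm{Alt}(5)$ inside $\mathrm{Alt}(7)$), so the sentence ``hence a composition factor of $\Sigma(n,\Z_q)$'' and the consequent isomorphism $\mathrm{P}\Sigma(n,p)\cong\mathrm{P}\Sigma(n,\ell)$ are not justified as stated. This is harmless for the argument: your own analysis of the congruence filtration (the solvable kernel $K_\ell$ inside each $\Sigma(n,\Z_{\ell^{b_\ell}})$, together with the fact that a simple section of a direct product is a section of a factor) already shows that any nonabelian simple section of $\Sigma(n,\Z_q)$ is a section of some $\Sigma(n,\ell)$ with $\ell\mid q$, and Lemma~\ref{noncommfactor} only requires ``section''. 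Replacing ``composition factor'' by ``section'' throughout that paragraph fixes the write-up.
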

\begin{proof}
The group $U$ is a subdirect product of its projections $A$, $B$ into 
$\Sigma(n,\Z_{p^a})$ and $\Sigma(n,\Z_q)$, respectively, where
$A$ is all of $\Sigma(n,\Z_{p^a})$. Assuming first that $q=\abk r^b$ is 
a prime power, we claim that $B\leq U$, i.e., $U=A\times B$. If not, then 
$A$ and $B$ have isomorphic non-trivial quotients. 
By Lemma~\ref{solradlemma}, any non-trivial quotient of $A$ has 
a quotient isomorphic to $A/R\cong {\rm P}\Sigma(n,p)$, where 
$R$ is the solvable radical of $A$. In turn, $A/R$ must be a section 
of the radical quotient ${\rm P}\Sigma(n,r)$ of $\Sigma(n,\Z_q)$: 
but this contradicts Lemma~\ref{noncommfactor}.

Suppose now that $q=r^b s$ with $r$ prime and $\gcd(r,s)=1$.
By the preceding paragraph, $\Sigma(n,\Z_{p^a})\leq \varphi_{p^ar^b}(U)$.
Thus $U\cap \Sigma(n,\Z_{p^as})$ projects onto $\Sigma(n,\Z_{p^a})$ 
modulo $p^a$. The lemma follows by induction.
\end{proof}

\begin{theorem}\label{SurjectiveImpliesCoprime}
Let $n> 2$ and $H\leq \Gamma_n$ be arithmetic of 
level $M>1$. Suppose that $(n,p)$ is unexceptional 
and $\varphi_p(H)=\Sigma(n,p)$. Then $p\nmid M$.
\end{theorem}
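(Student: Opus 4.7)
I argue by contradiction: assume $p \mid M$ and write $M = p^a q$ with $a \geq 1$ and $\gcd(p,q) = 1$. The goal is to show $\Gamma_{n,q} \leq H$; since $q$ is a proper divisor of $M$, this contradicts the definition of $M$ as the level of the unique maximal PCS in $H$ (which would force $M \mid q$).

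The first step is to upgrade $\varphi_p(H) = \Sigma(n,p)$ to $\varphi_{p^a}(H) = \Sigma(n,\Z_{p^a})$ by iterated application of Theorem~\ref{NonSplitLinear}. The hypothesis says that $\varphi_{p^a}(H)$ supplements the kernel $K_p$ of $\Sigma(n,\Z_{p^a}) \to \Sigma(n,p)$. Given $\varphi_{p^a}(H) \cdot K_{p^\ell} = \Sigma(n,\Z_{p^a})$ for some $1 \leq \ell < a$, passing to the quotient $\Sigma(n,\Z_{p^{\ell+1}})$ produces a supplement of $\ker(\Sigma(n,\Z_{p^{\ell+1}}) \to \Sigma(n,\Z_{p^\ell}))$, which must be improper by Theorem~\ref{NonSplitLinear} since $(n,p)$ being unexceptional excludes the list~\eqref{Exceptions}. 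Iterating yields $\varphi_{p^a}(H) = \Sigma(n,\Z_{p^a})$.

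Next, set $U = \varphi_M(H)$, viewed via the Chinese remainder decomposition as a subgroup of $\Sigma(n,\Z_{p^a}) \times \Sigma(n,\Z_q)$; the previous step ensures $U$ projects onto the first factor. The target inclusion $\Sigma(n,\Z_{p^a}) \times \{1\} \leq U$ is equivalent to $\Gamma_{n,q} \leq H$. If $q = 1$ then $U$ is the whole group, forcing $H = \Gamma_n$ and $M = 1$, contradicting $M > 1$. If $q \geq 3$ is odd, the inclusion is a direct application of Lemma~\ref{kernelcor}.

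The main obstacle is the remaining case, $q$ even (which forces $p$ odd), since Lemma~\ref{kernelcor} is not stated for this. I would re-run its Goursat-style argument: if $U$ did not contain $\Sigma(n,\Z_{p^a})$, then Lemma~\ref{solradlemma} would force the non-abelian simple quotient $\mathrm{P}\Sigma(n,p)$ of $\Sigma(n,\Z_{p^a})$ to appear as a section of $\Sigma(n,\Z_q)$, hence (inducting on the prime factorisation of $q$) of some factor $\Sigma(n,\Z_{r^b})$ with $r \mid q$. For $r$ odd, and also for $r = 2$ with $(\Sigma,n) \neq (\Sp,4)$, the usual radical-quotient identification from Lemma~\ref{solradlemma} combined with Lemma~\ref{noncommfactor} rules this out. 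The one truly new sub-case is $r = 2$ with $(\Sigma,n) = (\Sp,4)$, the exceptional case of Lemma~\ref{solradlemma}, where $\Sp(4,\Z_{2^b})$ admits the extra simple section $\mathrm{Sym}(6)$; but then $\mathrm{P}\Sigma(n,p) = \PSp(4,p)$ with $p$ odd would be a section of $\mathrm{Sym}(6)$, which the order bound $|\PSp(4,3)| = 25920 > 720 = |\mathrm{Sym}(6)|$ immediately precludes.
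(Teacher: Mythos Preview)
Your proof is correct and follows the same two-step strategy as the paper: first lift surjectivity from $\Sigma(n,p)$ to $\Sigma(n,\Z_{p^a})$ via Theorem~\ref{NonSplitLinear}, then invoke the Goursat-type Lemma~\ref{kernelcor} to obtain $\Gamma_{n,q}\le H$. You frame this as a contradiction (assuming $a\ge 1$), whereas the paper writes $M=p^aq$ and concludes $a=0$; this is purely cosmetic.

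The one substantive difference is your handling of even $q$. The paper cites Lemma~\ref{kernelcor} without comment, even though that lemma is stated only for $q\ge 3$ odd; you correctly flag this and fill the gap by re-running the Goursat argument, dealing with the $(\Sp,4,2)$ exception of Lemma~\ref{solradlemma} via the order bound $|\PSp(4,3)|>|\mathrm{Sym}(6)|$. In fact the oddness hypothesis in Lemma~\ref{kernelcor} is never used in its proof (the argument needs only that $(n,p)$ is unexceptional, so that $\mathrm{P}\Sigma(n,p)$ avoids the exceptions of Lemma~\ref{noncommfactor}), so the paper's citation is imprecise rather than genuinely gapped; but your version is the more careful one.
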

\begin{proof}
Assume that $\varphi_{p^{k-1}}(H)=\Sigma(n,\Z_{p^{k-1}})$ for some
$k\geq 2$. 
Then $\varphi_{p^k}(H)$ is a supplement of $\ker \varphi_{p^{k-1}}$ in
$\Sigma(n,\Z_{p^k})$, so $\varphi_{p^k}(H)=\Sigma(n,\Z_{p^k})$ by
Theorem~\ref{NonSplitLinear}.  
Hence $\varphi_{p^k}(H)=\SL(n,\Z_{p^k})$ for all $k\geq 1$ by induction.

If $M=p^a q$ with $\mathrm{gcd}(p, q)=1$ then 
$\Gamma_{n,q}\leq H$ by Lemma~\ref{kernelcor}. 
Since $H$ has level $M$, this forces $a=0$.
\end{proof}

\begin{corollary}
Suppose that $H\leq \G$ is arithmetic, $n> 2$, and 
$\varphi_p(H) = \varphi_p(\G)$ for all odd primes $p$. Then the 
level of $H$ is a $2$-power. If additionally $n\ge 5$ and 
$\varphi_2(H)=\varphi_2(\G)$ then $H=\G$.
\end{corollary}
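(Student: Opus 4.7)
The plan is to derive both statements as direct consequences of Theorem~\ref{SurjectiveImpliesCoprime}, which already does the heavy lifting. Since $H$ is arithmetic, it has a well-defined level $M = M(H)$, and by Corollary~\ref{Pivspi} we have $\Pi(H)\subseteq \pi(M)$, so we only need to rule out primes $p$ one at a time by checking that $\varphi_p(H) = \varphi_p(\G)$ forces $p\nmid M$.

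For the first claim, I would take any odd prime $p$. Because $n>2$ and the definition of ``unexceptional'' only excludes $p=2$ together with $n\in\{3,4\}$, the pair $(n,p)$ is unexceptional for every odd $p$. The hypothesis $\varphi_p(H)=\varphi_p(\G) = \Sigma(n,p)$ therefore allows us to invoke Theorem~\ref{SurjectiveImpliesCoprime}, giving $p\nmid M$. As this holds for every odd prime, $M$ must be a power of $2$.

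For the second claim, assume in addition that $n\ge 5$ and $\varphi_2(H)=\varphi_2(\G)$. Now the pair $(n,2)$ is also unexceptional since $n\ge 5$, so Theorem~\ref{SurjectiveImpliesCoprime} again applies, this time with $p=2$, forcing $2\nmid M$. Combined with the first part, $M=1$. But a level of $1$ means the unique maximal PCS in $H$ is $\Gamma_{n,1}=\G$ itself, so $\G\le H\le \G$, i.e. $H=\G$.

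There is essentially no obstacle here: the only thing to be careful about is verifying the ``unexceptional'' bookkeeping in each case (odd $p$ with $n>2$, and $p=2$ with $n\ge 5$), and then correctly interpreting $M=1$ as saying the maximal PCS in $H$ equals $\G$. Both are immediate from the definitions set up earlier in Section~\ref{Section1}.
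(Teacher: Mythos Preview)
Your proposal is correct and matches the paper's intended approach: the paper states this corollary without proof, treating it as an immediate consequence of Theorem~\ref{SurjectiveImpliesCoprime} once one checks that $(n,p)$ is unexceptional for odd $p$ with $n>2$, and for $p=2$ with $n\ge 5$. The only cosmetic point is that Theorem~\ref{SurjectiveImpliesCoprime} is stated for $M>1$, so you should note that if $M=1$ both conclusions hold trivially; otherwise your argument goes through verbatim.
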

\begin{remark} 
There are finitely generated subgroups $H$ of $\G$ with infinite
index such that $\Pi(H) = \emptyset$; see \cite{Humphreys,VenkySoifer}.
\end{remark}

Let $H\le \Gamma_n$ for $n> 2$. Define
\[
\delta_H(m)=|\Gamma_n:\Gamma_{n,m}H|.
\]
That is, $\delta_H(m)=|\varphi_m(\Gamma_n):\varphi_m(H)|$. 
We record a few properties of the delta function.
\begin{lemma}\label{BasicDeltaProperties} 
Let $m$, $m'$ be positive integers.
\begin{itemize} 
\item[{\rm (a)}] If $m \hspace{1.4pt} | \hspace{1.4pt} m'$ then
$\delta_H(m) \hspace{1.4pt} | \hspace{1.4pt} \delta_H(m')$.
\item[{\rm (b)}] Suppose that $H$ is arithmetic of level $M$, so
$\delta_H(M) =|\Gamma_n:H|$. Then 
\begin{itemize}
\item[{\rm (i)}] 
$\delta_H(m) \hspace{1.4pt} | \hspace{1.4pt}\delta_H(M)$
\item[{\rm (ii)}]  $\delta_H(m)=\delta_H(M)$ if and only if 
$M \hspace{1.4pt} | \hspace{1.4pt} m$.
\end{itemize}
\end{itemize}
\end{lemma}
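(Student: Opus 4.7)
The plan is to work throughout with the formulation $\delta_H(m)=|\Gamma_n:\Gamma_{n,m}H|$, since the relevant indices in $\Gamma_n$ behave cleanly with respect to the lattice of principal congruence subgroups. The single underlying observation is that $m\mid m'$ implies $\Gamma_{n,m'}\le\Gamma_{n,m}$ (reduction mod $m$ factors through reduction mod $m'$), and hence $\Gamma_{n,m'}H\le\Gamma_{n,m}H$. Note also that $|\Gamma_n:\Gamma_{n,k}|$ is finite for every $k$, so each $\delta_H(k)$ is a finite positive integer and ordinary Lagrange-type factorisations are available.

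For (a), I would apply the tower of indices to the chain $\Gamma_{n,m'}H\le\Gamma_{n,m}H\le\Gamma_n$, obtaining
\[
\delta_H(m')=\delta_H(m)\cdot|\Gamma_{n,m}H:\Gamma_{n,m'}H|,
\]
which yields the divisibility. For (b)(i), arithmeticity of $H$ with level $M$ gives $\Gamma_{n,M}\le H$, so $\Gamma_{n,M}H=H$ and $\delta_H(M)=|\Gamma_n:H|$ as asserted; since $\Gamma_{n,m}H\supseteq H$, its index in $\Gamma_n$ divides $|\Gamma_n:H|=\delta_H(M)$.

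For (b)(ii), the $(\Leftarrow)$ direction follows from $M\mid m \Rightarrow \Gamma_{n,m}\le\Gamma_{n,M}\le H$, giving $\Gamma_{n,m}H=H$ and the claimed equality. For $(\Rightarrow)$, I would exploit the ratio $\delta_H(M)/\delta_H(m)=|\Gamma_{n,m}H:H|$; equality of the two delta values forces $\Gamma_{n,m}\le H$, so $\Gamma_{n,m}$ is a PCS contained in $H$. By the defining maximality of $\Gamma_{n,M}$ we conclude $\Gamma_{n,m}\le\Gamma_{n,M}$, which is equivalent to $M\mid m$.

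There is no real obstacle: the entire argument is routine once one recasts $\delta_H(m)$ as an index in $\Gamma_n$. The only thing to beware of is working instead with images $\varphi_m(H)\le\varphi_m(\Gamma_n)$, where changing the modulus changes the ambient group, making the tower-of-indices identity and the appeal to maximality considerably more awkward.
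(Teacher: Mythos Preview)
Your argument is correct and entirely standard. The paper in fact states this lemma without proof, treating it as a routine observation about indices; your write-up supplies exactly the expected details, and the one nontrivial point you flag --- that $\Gamma_{n,m}\le\Gamma_{n,M}$ forces $M\mid m$ --- follows since $t_{12}(m)\in\Gamma_{n,m}$ lies in $\Gamma_{n,M}$ only when $M\mid m$.
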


The next theorem gives a criterion to test whether $M$ is even
(when $2\not \in \Pi$).
\begin{theorem}
\label{evenMtest}
Let $n> 2$ and $H\leq \Gamma_n$ be arithmetic of 
level $M>1$. Let $q$ be the product of all odd primes in $\Pi(H)$.
Then $M$ is even  if and only if $\delta_H(q)<\delta_H(4q)$.
\end{theorem}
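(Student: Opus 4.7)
The key step is to translate the inequality into a statement about the image of $H \cap \Gamma_{n,q}$ modulo $4$. Since $q$ is odd, the Chinese Remainder Theorem identifies $\varphi_{4q}(\Gamma_n)$ with $\Sigma(n,\Z_4) \times \Sigma(n,\Z_q)$. A short index computation (the kernel of the projection of $\varphi_{4q}(H)$ to the $q$-factor is naturally isomorphic, via the $4$-projection, to $\varphi_4(H \cap \Gamma_{n,q})$) gives
\[
\delta_H(4q) \;=\; \delta_H(q) \cdot [\Sigma(n,\Z_4) : \varphi_4(H \cap \Gamma_{n,q})].
\]
So the theorem becomes: $M$ is even if and only if $\varphi_4(H \cap \Gamma_{n,q}) \subsetneq \Sigma(n,\Z_4)$.

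For the 'if $M$ is odd' direction I would first show $q \mid M$. Corollary \ref{Pivspi} gives $\Pi(H) \subseteq \pi(M)$, and conversely Theorem \ref{SurjectiveImpliesCoprime} applied at each odd prime $p \in \pi(M)$ (for which $(n,p)$ is automatically unexceptional) yields $\pi(M) \setminus \{2\} \subseteq \Pi(H)$; when $M$ is odd these force $\pi(q) = \pi(M)$, so $q \mid M$. Then $\Gamma_{n,M} \le H \cap \Gamma_{n,q}$, and since $\gcd(M,4) = 1$ the CRT yields $\varphi_4(\Gamma_{n,M}) = \Sigma(n,\Z_4)$, giving the required surjectivity.

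For the converse I would argue contrapositively: assuming $\varphi_4(H \cap \Gamma_{n,q}) = \Sigma(n,\Z_4)$, deduce $M$ is odd. Reducing mod $2$, $\varphi_2(H) \supseteq \varphi_2(H \cap \Gamma_{n,q}) = \Sigma(n,2)$, so when $(n,2)$ is unexceptional Theorem \ref{SurjectiveImpliesCoprime} at $p = 2$ immediately yields $2 \nmid M$. The main obstacle is the exceptional pairs $(n,\Sigma) \in \{(3,\SL),(4,\SL),(4,\Sp)\}$, where Lemma \ref{kernelcor} is unavailable. Here one first disposes of the case $\varphi_4(H) \ne \Sigma(n,\Z_4)$ trivially (from $\varphi_4(H \cap \Gamma_{n,q}) \subseteq \varphi_4(H)$), and otherwise uses that all exceptions in Theorem \ref{NonSplitLinear} occur at $a=2$ to lift $\varphi_4(H) = \Sigma(n,\Z_4)$ inductively to $\varphi_{2^k}(H) = \Sigma(n,\Z_{2^k})$ for every $k \ge 2$. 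Writing $M = 2^e M_o$ with $e \ge 1$, one then analyzes the subdirect product $\varphi_M(H) \le \Sigma(n,\Z_{2^e}) \times \Sigma(n,\Z_{M_o})$ by Goursat's lemma: failure of $\Gamma_{n,M_o} \le H$ would require a non-trivial quotient of $\Sigma(n,\Z_{2^e})$ to appear as a section of $\Sigma(n,\Z_{M_o})$, which Lemmas \ref{solradlemma} and \ref{noncommfactor} together exclude outside sections of the form $\PSL(3,2)$ or $\PSp(4,2)$; these residual cases can be dispatched by direct inspection.
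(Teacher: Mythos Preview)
Your reformulation $\delta_H(4q)=\delta_H(q)\cdot[\Sigma(n,\Z_4):\varphi_4(H\cap\Gamma_{n,q})]$ is clean, and the ``$M$ odd'' direction is correct. The gap lies in the exceptional cases $(3,\SL)$ and $(4,\Sp)$ of the converse. Your Goursat analysis of $\varphi_M(H)$ uses only the consequence $\varphi_4(H)=\Sigma(n,\Z_4)$, which is strictly weaker than the hypothesis $\varphi_4(H\cap\Gamma_{n,q})=\Sigma(n,\Z_4)$, and the residual cases cannot be ``dispatched by direct inspection'': $\PSL(3,2)\cong\PSL(2,7)$ genuinely occurs as a subgroup of $\PSL(3,p)$ for infinitely many odd~$p$, and in the $\Sp(4)$ case the Goursat quotient could even be $C_2$ (via the index-$2$ normal subgroup in Lemma~\ref{solradlemma}), which is certainly a section of $\Sp(4,\Z_{M_o})$ whenever $M_o>1$. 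So from $\varphi_4(H)=\Sigma(n,\Z_4)$ alone one cannot force the Goursat quotient for $\varphi_M(H)$ to be trivial.

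The repair is to run Goursat on $H':=H\cap\Gamma_{n,q}$ rather than on $H$ (note that $H'$ also has level $M$, since $\Gamma_{n,M}\le H'$ and $q\mid M$). Because $q\mid M_o$ and $\pi(M_o)=\pi(q)$ consists of odd primes, $\varphi_{M_o}(H')$ lies in $\ker\big(\varphi_q\colon\Sigma(n,\Z_{M_o})\to\Sigma(n,\Z_q)\big)$, a group of odd order. Your lifting argument gives $\varphi_{2^e}(H')=\Sigma(n,\Z_{2^e})$, and every nontrivial quotient of $\Sigma(n,\Z_{2^e})$ has even order by Lemma~\ref{solradlemma}. The parity mismatch forces the Goursat quotient for $H'$ to be trivial, whence $\Gamma_{n,M_o}\le H'\le H$, contradicting $e\ge 1$. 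This odd-order trick is precisely the engine of the paper's own proof (the clause ``$K$ has odd order'' in its final paragraph); the paper simply organizes the reduction differently, via a minimal choice of $k$ and $r$, rather than working directly at level $M$.
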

\begin{proof}
By Theorem~\ref{SurjectiveImpliesCoprime}, $q$ is the product of 
all odd primes dividing $M$.

If $M$ is odd then $\Gamma_{n,4q}\Gamma_{n,M}=\Gamma_{n,q}$, so
$\delta_H(4q)=\delta_H(q)$.

For the rest of the proof, suppose that $M$ is even: say $M=2^ls$, 
$l\geq 1$, $s\ge 1$ odd. By Lemma~\ref{BasicDeltaProperties}, 
$\delta_H(s)<\delta_H(2^ks)$ for $1\le k\le l$. So choose the 
least $k$ and least multiple $r$ of $q$ dividing $s$ such that 
$\delta_H(r)<\delta_H(2^k r)$. Then let $m=2^kr$, 
$A=\abk \varphi_{2^k}(H)$, $B=\abk \varphi_r(H)$, 
and $N=\varphi_m(\Gamma_{n,r}\cap H)$.

If $A\not=\Sigma(n,\Z_{2^k})$ then by the same argument 
as in the first paragraph of the proof of 
Theorem~\ref{SurjectiveImpliesCoprime} (here avoiding the 
exceptions for $p=2$ in small degrees), 
$\varphi_4(H)\not=\Sigma(n,\Z_4)$. 
So $\delta_H(q)<\delta_H(4q)$.

Henceforth $A=\Sigma(n,\Z_{2^k})$. Then $\varphi_{2^k}(N)$ is a 
proper (normal) subgroup of $A$: otherwise, 
$\Gamma_{n,r}\leq \Gamma_n = (\Gamma_{n,r}\cap H) \Gamma_{n,2^k}
\Rightarrow \Gamma_{n,r}\leq 
(\Gamma_{n,r}\cap H)(\Gamma_{n,2^k}\cap \Gamma_{n,r})
\Rightarrow \Gamma_{n,r}\leq (\Gamma_{n,r}\cap H) \Gamma_{n,m}
\Rightarrow \delta_H(r)=\delta_H(m)$. By Lemma~\ref{solradlemma}, 
$\varphi_{2^k}(N)R\neq \abk A$ where $R$ is the solvable
radical of $A$. Since $R$ contains the kernel of $\varphi_2$ 
on $A$, $\varphi_2(N) \neq \Sigma(n,2)$. We conclude that 
$\delta_H(r)<\delta_H(2r)$. 
Therefore $k=1$.

Let $L=\varphi_m(\Gamma_{n,2}\cap H)$.
Then $L\neq \varphi_m(H)$ and $A/\varphi_2(N)\cong B/\varphi_r(L)$. 
Let $K$ be the kernel of $\varphi_q$ on $\varphi_r(H)$, i.e., 
$K= \varphi_r(H)\cap \varphi_r(\Gamma_{n,q})=\varphi_r(H\cap \Gamma_{n,q})$. 
We show that $K\varphi_r(L)\not=B$. This will imply that $\varphi_{2q}(H)$
is a proper subdirect product of $A=\varphi_2(H)$ and $\varphi_q(H)$, so 
$\delta_H(q)<\abk \delta_H(2q)\leq \delta_H(4q)$ as desired.

If $A/\varphi_2(N)$ is solvable then $|A:\varphi_2(N)|=2$ by 
Lemma~\ref{solradlemma}. Thus  $K\varphi_r(L)\not=B$ because $K$ has 
odd order. If $A/\varphi_2(N)$ is not solvable then neither is 
$B/\varphi_r(L)$, and the result again follows. 
\end{proof}

For $n> 2$ and any $H\leq \Gamma_n$, define 
\begin{equation}\label{TildePiDefinition}
\tilde{\Pi}(H) = \left\{\begin{array}{cl} \{2\}\cup\Pi(H) &
\text{ if $n\le 4$, $2\not\in\Pi(H)$, 
and } \delta_H(4q)>\delta_H(q)\\
\Pi(H) &\text{ otherwise}
\end{array}\right.
\end{equation}
where $q$ is the product of all odd primes in $\Pi(H)$.
Combining Theorems~\ref{SurjectiveImpliesCoprime} and~\ref{evenMtest} 
yields the next theorem.
\begin{theorem}
\label{BothPisAreClose}
If $H$ is arithmetic of level $M>1$ then $\pi(M)=\tilde{\Pi}(H)$.
\end{theorem}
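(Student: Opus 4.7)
The plan is to establish both containments $\pi(M) \subseteq \tilde{\Pi}(H)$ and $\tilde{\Pi}(H) \subseteq \pi(M)$ by unpacking the definition of $\tilde{\Pi}(H)$ in \eqref{TildePiDefinition} and invoking Theorems~\ref{SurjectiveImpliesCoprime} and~\ref{evenMtest}. Since $\Pi(H) \subseteq \pi(M)$ always holds by Corollary~\ref{Pivspi}, the real work lies in analysing the prime $2$ in the small degrees $n\in\{3,4\}$, which is precisely the range where Theorem~\ref{SurjectiveImpliesCoprime} does not apply.

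For the inclusion $\pi(M) \subseteq \tilde{\Pi}(H)$, I would argue contrapositively. Take $p \notin \tilde{\Pi}(H)$; then certainly $p \notin \Pi(H)$, so $\varphi_p(H) = \varphi_p(\Gamma_n)$. If $p$ is odd, then $(n,p)$ is unexceptional (as $n\ge 3$), and likewise if $p=2$ and $n\ge 5$; in either case Theorem~\ref{SurjectiveImpliesCoprime} gives $p \nmid M$. The remaining possibility is $p=2$ with $n \le 4$: here $2\notin\tilde{\Pi}(H)$ together with $2\notin\Pi(H)$ forces the condition $\delta_H(4q)>\delta_H(q)$ from \eqref{TildePiDefinition} to fail, so $\delta_H(q)=\delta_H(4q)$, and Theorem~\ref{evenMtest} yields that $M$ is odd.

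For the reverse inclusion, let $p \in \tilde{\Pi}(H)$. If $p\in\Pi(H)$, then $p\in\pi(M)$ by Corollary~\ref{Pivspi}. Otherwise the definition of $\tilde{\Pi}(H)$ forces $p=2$, $n\le 4$, $2\notin\Pi(H)$, and $\delta_H(4q)>\delta_H(q)$; now Theorem~\ref{evenMtest} says directly that $M$ is even, so $2\in\pi(M)$.

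No genuine obstacle is expected: the theorem is essentially a bookkeeping combination of the two preceding results. The only subtlety worth flagging is that Theorem~\ref{SurjectiveImpliesCoprime} is unavailable at the exceptional pairs $(n,2)$ with $n\in\{3,4\}$, and this is exactly why the defining formula for $\tilde{\Pi}(H)$ carries the supplementary $\delta$-based branch that Theorem~\ref{evenMtest} was designed to certify.
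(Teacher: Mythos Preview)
Your proposal is correct and follows exactly the approach the paper intends: the paper's own proof consists of the single sentence ``Combining Theorems~\ref{SurjectiveImpliesCoprime} and~\ref{evenMtest} yields the next theorem,'' and you have simply spelled out that combination carefully via the two inclusions, handling the exceptional pairs $(n,2)$ with $n\in\{3,4\}$ through Theorem~\ref{evenMtest} and all other primes through Theorem~\ref{SurjectiveImpliesCoprime}.
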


We leave the problem of finding $\Pi(H)$ aside for the moment,
returning to it in Section~\ref{Section2}.

\subsubsection{}  \label{algosection}
Now we aim for our promised algorithm to compute $M$ from $\pi(M)$. 

\begin{lemma}\label{deltaGen} \ 
\begin{itemize}
\item[{\rm (i)}]
Suppose that $\delta_H(kp^{a})=\delta_H(kp^{a+1})$ for some prime 
$p$, positive integer $a$, and $k$ coprime to $p$.
Then $\delta_H(kp^{b})=\delta_H(kp^{a})$ for all $b\geq a$.
\item[{\rm (ii)}]  Let $p$, $a$, and $k$ be as in {\rm (i)}. 
Then $\delta_H(lp^b)=\delta_H(lp^{a})$ for all $b\geq a$ and any
multiple $l$ of $k$ such that $\pi(l)=\pi(k)$.
\end{itemize}
\end{lemma}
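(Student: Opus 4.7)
My plan is as follows. First I would reformulate the conclusion: for $m\mid m'$, the equality $\delta_H(m)=\delta_H(m')$ is equivalent to the kernel of the natural surjection $\varphi_{m'}(\Gamma_n)\twoheadrightarrow\varphi_m(\Gamma_n)$ being contained in $\varphi_{m'}(H)$. For $m=kp^b$, $m'=kp^{b+1}$, the CRT identification $\varphi_{kp^{b+1}}(\Gamma_n)\cong\varphi_k(\Gamma_n)\times\varphi_{p^{b+1}}(\Gamma_n)$ turns this kernel into $\{1\}\times K^{(b+1)}$, where $K^{(b+1)}$ is the elementary abelian kernel $\ker(\varphi_{p^b})\le\varphi_{p^{b+1}}(\Gamma_n)$ from Lemma~\ref{KForm}. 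The analogous identification works for $l$.

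For (i), I would induct on $b\geq a$, the base case being the hypothesis. For the step from $b$ to $b+1$, let $L$ be the preimage of $K^{(b+1)}$ under $\varphi_{p^{b+1}}$ applied to $\varphi_{p^{b+2}}(\Gamma_n)$; this is $\ker(\varphi_{p^b})$ on $\varphi_{p^{b+2}}(\Gamma_n)$, with $L/K^{(b+2)}\cong K^{(b+1)}$. By Lemma~\ref{lemmanosupplement} applied in the $p$-factor (whose level $b+2\geq 3$), $K^{(b+2)}$ has no proper supplement in $L$. The inductive hypothesis gives $\{1\}\times K^{(b+1)}\le\varphi_{kp^{b+1}}(H)$, so $\varphi_{kp^{b+2}}(H)\cap(\{1\}\times L)$ maps onto $\{1\}\times K^{(b+1)}$ and hence supplements $\{1\}\times K^{(b+2)}$ in $\{1\}\times L$. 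No-supplement forces equality, producing $\{1\}\times K^{(b+2)}\le\varphi_{kp^{b+2}}(H)$, which is the desired $\delta_H(kp^{b+2})=\delta_H(kp^{b+1})$.

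For (ii), I reduce to the single equality $\delta_H(lp^a)=\delta_H(lp^{a+1})$: once this is proved, applying (i) with $l$ in place of $k$ (which is legitimate since $\pi(l)=\pi(k)$ and hence $\gcd(l,p)=1$) gives the full conclusion. By Lemma~\ref{KForm} and the descriptions of $E_{n,m}$ from the setup, the quotient $\Gamma_{n,lp^a}/\Gamma_{n,lp^{a+1}}$ is generated as a $\Gamma_n$-module under conjugation by the images of the transvections $t_{ij}(lp^a)$, $i\neq j$ (together with their symplectic counterparts in the $\Sp$ case). So it suffices to show $t_{ij}(lp^a)^g\in H\cdot\Gamma_{n,lp^{a+1}}$ for every $g\in\Gamma_n$.

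For the key computation, the hypothesis of (i) applied to $t_{ij}(kp^a)^g\in\Gamma_{n,kp^a}$ yields $h\in H$ and $\gamma\in\Gamma_{n,kp^{a+1}}$ with $t_{ij}(kp^a)^g=h\gamma$. Setting $Y=g^{-1}e_{ij}g$, one has $Y^2=0$, $h\in\Gamma_{n,k}$, and $h\equiv 1+kp^aY\pmod{kp^{a+1}}$. I would then compute $h^{l/k}$ modulo two coprime moduli. Modulo $kp^{a+1}$: writing $h=1+kp^aY+kp^{a+1}R$ and using $Y^2=0$, the binomial expansion collapses so that every term of degree $\geq 2$ is divisible by $kp^{a+1}$, giving $h^{l/k}\equiv 1+lp^aY\pmod{kp^{a+1}}$. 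Modulo $l$: the image $\bar h\in\varphi_l(\Gamma_n)$ lies in $M:=\ker(\varphi_l\to\varphi_k)$; decomposing $M$ via CRT into its $p_i$-primary parts and using that the kernel of $\varphi_{p_i^{d_i}}$ inside $\Sigma(n,\Z_{p_i^{c_i}})$ has exponent dividing $p_i^{c_i-d_i}$ shows that the exponent of $M$ divides $l/k$, so $h^{l/k}\equiv 1\pmod l$. Combining both congruences via CRT (here $\gcd(l,p)=1$) gives $h^{l/k}\equiv 1+lp^aY=t_{ij}(lp^a)^g\pmod{lp^{a+1}}$, placing $t_{ij}(lp^a)^g$ in $H\cdot\Gamma_{n,lp^{a+1}}$. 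The main obstacle I expect is the exponent estimate on $M$, which crucially uses the hypothesis $\pi(l)=\pi(k)$: without it, primes of $l/k$ outside $\pi(k)$ would not be absorbed by the $p_i^{c_i-d_i}$-bounds on the $p_i$-parts of $M$.
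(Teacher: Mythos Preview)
Your argument for part (i) is correct and essentially the same as the paper's: both reduce to Lemma~\ref{lemmanosupplement}, yours by direct induction and the paper's by a minimal-counterexample phrasing, but the content is identical.

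Your proof of part (ii) is also correct, but the route differs genuinely from the paper's. The paper argues by contradiction: assuming $\delta_H(lp^b)\neq\delta_H(lp^{b+1})$ for some $b\geq a$, it sets $\bar H=\Gamma_{n,kp^b}\cap\Gamma_{n,lp^{b+1}}H$ and analyses $\bar H/\Gamma_{n,lp^{b+1}}$ inside the internal direct product
\[
\Gamma_{n,kp^{b}}/\Gamma_{n,lp^{b+1}}=\Gamma_{n,lp^{b}}/\Gamma_{n,lp^{b+1}}\times\Gamma_{n,kp^{b+1}}/\Gamma_{n,lp^{b+1}},
\]
whose factors have coprime orders (one a $p$-group, the other a $p'$-group because $\pi(l)=\pi(k)$). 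A short coprimality argument then forces $\Gamma_{n,kp^{b+1}}\bar H\neq\Gamma_{n,kp^b}$, contradicting $\delta_H(kp^b)=\delta_H(kp^{b+1})$ from (i). Your approach is constructive instead: you exhibit each $t_{ij}(lp^a)^g$ as $h^{l/k}$ modulo $\Gamma_{n,lp^{a+1}}$ by a direct binomial computation (using $Y^2=0$) together with the exponent bound on $\ker(\varphi_l\to\varphi_k)$. The paper's version is shorter and stays purely at the level of subgroup lattices; yours is more explicit and in fact produces a witness in $H$ for each target element, which could be useful if one ever wanted an effective version. Both arguments hinge on $\pi(l)=\pi(k)$ at the same point: in the paper it makes the second direct factor a $p'$-group, in yours it guarantees $d_i\geq 1$ so that the exponent of each $p_i$-primary piece of $M$ divides $p_i^{c_i-d_i}$.
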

\begin{proof} 
(i) If $b>a+1$ is minimal subject to
$\delta_H(kp^b)\ne \delta_H(kp^{a})$
then $\delta_H(kp^{b-2})=\delta_H(kp^{b-1})=\delta_H(kp^a)$. 
So $\Gamma_{n,kp^{b-2}} \leq \Gamma_{n,kp^{b-1}}H$, implying 
that $\Gamma_{n,kp^{b-2}}\cap H$ is a proper supplement
of  $\Gamma_{n,kp^{b-1}}$ in $\Gamma_{n,kp^{b-2}}$. Since
\[
\Gamma_{n,kp^{b-2}}/\Gamma_{n,kp^b} 
\cong \Gamma_{n,p^{b-2}}/\Gamma_{n,p^b},
\]
with $\Gamma_{n,kp^{b-1}}/\Gamma_{n,kp^b}$ corresponding to 
$\Gamma_{n,p^{b-1}}/\Gamma_{n,p^b}$ under the isomorphism, this 
contradicts Lemma~\ref{lemmanosupplement}.

(ii) Suppose that there are $b\geq a$, and $l$ divisible by $k$ with 
$\pi(l) = \pi(k)$, such that $\delta_H(lp^{b})\ne \delta_H(lp^{b+1})$. 
By (i), $\delta_H(kp^{b})=\delta_H(kp^{b+1})$. 
Define $\bar{H} = \abk \Gamma_{n,kp^b} \cap \abk \Gamma_{n,lp^{b+1}}H$. 
We observe that $\Gamma_{n,kp^b} = \Gamma_{n,kp^{b+1}} \bar{H}$
and $\bar{H}/\Gamma_{n,lp^{b+1}}$ is a proper subgroup of
\[
\Gamma_{n,kp^{b}}/\Gamma_{n,lp^{b+1}}= \Gamma_{n,lp^{b}}/\Gamma_{n,lp^{b+1}}
\times \Gamma_{n,kp^{b+1}}/\Gamma_{n,lp^{b+1}}.
\]
The factors of this direct product have coprime orders: one is isomorphic 
to the $p$-group $\Gamma_{n,p^{b}}/\Gamma_{n,p^{b+1}}$; the other
is isomorphic to $\Gamma_{n,k}/\Gamma_{n,l}$, which is a $p'$-group
because $\pi(l)=\pi(k)$. Hence 
$\Gamma_{n,kp^{b+1}} \bar{H}= \Gamma_{n,kp^{b+1}} K$
for some $K < \Gamma_{n,lp^{b}}$ in $\bar{H}$ such that
$K\cap \Gamma_{n,kp^{b+1}}=\Gamma_{n,lp^{b+1}}$.
But then $\Gamma_{n,kp^{b+1}} \bar{H}\neq \Gamma_{n,kp^b}$.
\end{proof}

The following procedure computes the level of an arithmetic group $H$. 
The idea is to add higher powers of prime divisors of the level while 
$\delta_H$ increases, until $\delta_H$ reaches a stabilized value as 
dictated by Lemma~\ref{deltaGen}.
(We keep the specification of input and output completely
general at this stage.)

\vspace{16pt}

${\tt LevelMaxPCS}(S, \Omega)$

\vspace{7pt}

Input: a generating set $S$ for a subgroup $H\leq \Gamma_n$; 
a finite set $\Omega$ of primes.

Output: an integer $N$.

\vspace{2pt}

\begin{itemize}
\item[] For each $p\in\Omega$ let $\mu_p=1$ and 
$z_p=\prod_{q\in\Omega, q\neq p} q$.

\item[] While $\exists \, p\in\Omega$ such that
$\delta_H(p^{\mu_p+1} \cdot z_p)>\delta_H(p^{\mu_p} \cdot z_p)$
\begin{itemize} 
\item[] increment $\mu_p$ by $1$ and repeat.
\end{itemize}
\item[] Return  $N=\prod_{p\in\Omega}p^{\mu_p}$.
\end{itemize}

\vspace{5pt}

\begin{remark} 
The test for even $M$ in Theorem~\ref{evenMtest} (which is 
invoked only when $n\le 4$ and we have discovered that 
$2\not \in \Pi(H)$) makes a similar comparison of indices 
$\delta_H$, and can be implemented using the same subroutines 
as above.
\end{remark} 

\begin{remark} 
A reader might ask whether ${\tt LevelMaxPCS}$ is unduly complicated:
perhaps the least $p^a$ such that $\delta_H(p^a)=\delta_H(p^{a+1})$ is 
the $p$-part of the level of an input arithmetic group? This supposition 
is false, as the following example (constructed from a subdirect product 
of $\Gamma_{3,3}/\Gamma_{3,9}\cong C_3^8$ with a subgroup of 
$\PSL(3,5)$ of order $3$) illustrates. Let
\begin{eqnarray*}
&H=\Big \langle \, \Gamma_{3,45},
{\footnotesize
\left(\begin{array}{rrr}
1&30&0\\ 
0&1&0\\ 
0&0&1\\ 
\end{array}\right)}, 
{\footnotesize
\left(\begin{array}{rrr} 
-29&0&-30\\ 
0&1&0\\ 
30&0&31\\ 
\end{array}\right)}, 
{\footnotesize
\left(\begin{array}{rrr} 
-29&-45&15\\ 
30&1&30\\ 
30&0&31\\ 
\end{array}\right)}, 
\\
&\phantom{H=}
\hspace{20pt} {\footnotesize
\left(\begin{array}{rrr} 
1&0&0\\ 
15&-29&-30\\ 
30&30&31\\ 
\end{array}\right)}, 
{\footnotesize
\left(\begin{array}{rrr} 
16&15&0\\ 
-255&-239&0\\ 
0&0&1\\ 
\end{array}\right)}, 
{\footnotesize
\left(\begin{array}{rrr} 
16&15&30\\ 
-255&-239&15\\ 
0&0&1\\ 
\end{array}\right)}, 
\\
&\phantom{H=}
\hspace{10pt} 
{\footnotesize
\left(\begin{array}{rrr} 
1&0&30\\ 
0&1&30\\ 
0&0&1\\ 
\end{array}\right)}, 
{\footnotesize
\left(\begin{array}{rrr} 
10&0&9\\ 
36&-137&66\\ 
-99&-453&22\\ 
\end{array}\right)} 
\Big \rangle .
\end{eqnarray*}
Then $\delta_H(3)=\delta_H(9)=5616$, $\delta_H(5)=\delta_H(25)=124000$, 
$\delta_H(15)=696384000$, and $\delta_H(45)=2089152000$. Hence 
$H$ has level $45$, not $15$.

This example is a phenomenon of mixed primes: if the level of 
$H$ is a power $p^r$ for $p$ prime and unexceptional $(n,p)$, and
$\delta_H(p^a)=\delta_H(p^{a+1})$ for $a\geq 1$, then $r\leq a$. 
\end{remark}

The next theorem justifies correctness of ${\tt LevelMaxPCS}$ in our 
main situation.
\begin{theorem}\label{LevelMaxPCSJustified}
If $H=\langle S\rangle$ is arithmetic of level $M$ then 
${\tt LevelMaxPCS}$ with input $S$ and $\Omega = \pi(M)$ terminates, 
returning $M$. 
\end{theorem}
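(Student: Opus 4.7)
The plan is to show that \texttt{LevelMaxPCS} terminates and that on exit $\mu_p = M_p$ for every $p \in \sigma$, where $M_p$ denotes the exponent of $p$ in $M$, so that $N = \prod_{p \in \sigma} p^{\mu_p} = M$. Termination is immediate: for each fixed $p \in \sigma$ the sequence $b \mapsto \delta_H(p^b z_p)$ is non-decreasing by Lemma~\ref{BasicDeltaProperties}(a), bounded above by $\delta_H(M) = |\Gamma_n : H| < \infty$, and eventually constant by Lemma~\ref{deltaGen}(i). So the test ``$\delta_H(p^{\mu_p+1} z_p) > \delta_H(p^{\mu_p} z_p)$'' fails for $p$ from some point on, and since $|\sigma| < \infty$ the while loop performs only finitely many iterations.

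For correctness I treat the two inequalities separately. The easy direction is $\mu_p \ge M_p$: suppose at termination $\mu_p < M_p$, so that $\delta_H(p^{\mu_p} z_p) = \delta_H(p^{\mu_p + 1} z_p)$. Apply Lemma~\ref{deltaGen}(ii) with $a = \mu_p$, $k = z_p$, $b = M_p$, and $l := \prod_{q \in \sigma,\, q \ne p} q^{M_q}$, which is a multiple of $z_p$ with $\pi(l) = \pi(z_p)$. The lemma yields $\delta_H(l p^{M_p}) = \delta_H(l p^{\mu_p})$, that is $\delta_H(M) = \delta_H(M/p^{M_p - \mu_p})$. Lemma~\ref{BasicDeltaProperties}(b)(ii) then forces $M \mid M/p^{M_p - \mu_p}$, a contradiction.

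The main content is the upper bound $\mu_p \le M_p$, for which I aim to show $\delta_H(p^{M_p} z_p) = \delta_H(p^{M_p+1} z_p)$ so that the test for $p$ fails as soon as $\mu_p$ reaches $M_p$. Write $\tau$ for the reduction $\Sigma(n, \Z_{p^{M_p+1} z_p}) \to \Sigma(n, \Z_{p^{M_p} z_p})$; under the Chinese Remainder decomposition $\Sigma(n, \Z_{p^{M_p+1} z_p}) \cong \Sigma(n, \Z_{p^{M_p+1}}) \times \prod_{q \in \sigma \setminus \{p\}} \Sigma(n, \Z_q)$, the kernel of $\tau$ is $\ker\bigl(\Sigma(n, \Z_{p^{M_p+1}}) \to \Sigma(n, \Z_{p^{M_p}})\bigr) \times \{1\}$. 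My plan is to prove $\varphi_{p^{M_p+1} z_p}(\Gamma_{n,M}) = \ker \tau$: the inclusion $\subseteq$ is immediate from $g \equiv 1 \pmod{M}$ for $g \in \Gamma_{n,M}$, while for $\supseteq$ I use strong approximation (surjectivity of $\Gamma_n \twoheadrightarrow \Sigma(n, \Z_{pM})$) to lift an arbitrary $(k, 1, \ldots, 1) \in \ker \tau$ to $g \in \Gamma_n$ whose image in $\Sigma(n, \Z_{pM})$ is $(k, 1, \ldots, 1)$; such $g$ automatically lies in $\Gamma_{n,M}$. Since $\Gamma_{n,M} \le H$, this gives $\ker \tau \subseteq \varphi_{p^{M_p+1} z_p}(H)$, so $\varphi_{p^{M_p+1} z_p}(H)$ is the full $\tau$-preimage of $\varphi_{p^{M_p} z_p}(H)$ and the two indices coincide. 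The main obstacle is this upper-bound step, which appeals to the Chinese Remainder decomposition of $\Sigma(n, \Z_m)$ and strong approximation---standard but not stated explicitly in the preceding lemmas---whereas termination and the lower bound fall out directly from Lemmas~\ref{BasicDeltaProperties} and~\ref{deltaGen}.
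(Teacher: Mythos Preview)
Your proof is correct and follows essentially the same route as the paper's: termination via boundedness of $\delta_H$, then $\mu_p\le M_p$ (the paper's ``$N\mid M$'') and $\mu_p\ge M_p$ (the paper's ``$M\mid N$'') via Lemmas~\ref{BasicDeltaProperties} and~\ref{deltaGen}. The only difference is that you spell out the step $\delta_H(p^{M_p+1}z_p)=\delta_H(p^{M_p}z_p)$ explicitly via CRT and surjectivity of $\varphi_{pM}$, whereas the paper simply asserts it (implicitly using $\Gamma_{n,r}\Gamma_{n,s}=\Gamma_{n,\gcd(r,s)}$, which is an equivalent consequence of the same surjectivity).
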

\begin{proof}
The values of $\delta_H$ encountered in the while-loop are bounded,
because $\delta_H(m)$ divides $\delta_H(M)$ for all $m$. 
Thus ${\tt LevelMaxPCS}$ terminates. 

If $p^a$ is the $p$-part of $M$, and $q \hspace{1.4pt} | \hspace{1.4pt} M$ 
is coprime to $p$, then $\delta_H(p^{a+1}q) = \delta_H(p^aq)$.
So the output $N$ of ${\tt LevelMaxPCS}$ must divide $M$.  
Since $\pi(N) = \pi(M)$, this implies that $\delta_H(M)= \delta_H(N)$
by Lemma~\ref{deltaGen}. Then $M\hspace{1.4pt} | \hspace{1.4pt} N$ by 
Lemma~\ref{BasicDeltaProperties}. 
\end{proof}

\section{Computing with Zariski dense subgroups}
\label{Section2}

Let $n> 2$ and $H$ be a finitely generated subgroup of 
$\Gamma_n= \SL(n,\Z)$ or $\Sp(n,\Z)$. We describe how to compute 
$\Pi(H)$ when $H$ is arithmetic, or, more generally, dense.
This relies on knowing a transvection $t \in H$ (which is true 
of all groups in Section~\ref{Section3}); and we restrict to odd
degree $n$ for $\G=\SL(n,\Z)$. Note that if $H$ is 
arithmetic then it contains transvections, whereas if $H$ is dense 
then it need not even contain a unipotent 
element~\cite[Proposition~5.3]{Venky87}. 
 
We also provide a simple algorithm to test density of $H$. Here 
again $H$ should contain a known transvection, and $n$ is odd if 
$\G=\SL(n,\Z)$.
Less restricted density testing algorithms are discussed in 
Subsection~\ref{Subsection22}. Then 
Subsection~\ref{ComputingPiForDenseInput} 
extends $\tt LevelMaxPCS$ to dense input groups.

\subsection{Density and transvections}\label{Subsection21}

We formulate various conditions for density. The first result is truly 
fundamental (see \cite{Lubotzky97}, 
\cite{Weisfeiler}, and \cite[Theorem~2.4]{Rivin}).
\begin{theorem}\label{Corollary2}
$H$ is dense if and only if $\varphi_p(H) = \varphi_p(\G)$ for 
some prime $p > 3$.
\end{theorem}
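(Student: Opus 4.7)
The plan is to handle the two directions separately. For the forward implication, assume $H$ is Zariski dense in $\calg(\C)$, where $\calg=\SL(n,\C)$ or $\Sp(n,\C)$. Because $\calg$ is simply connected, absolutely almost simple, and defined over $\Q$, the strong approximation theorem of Matthews--Vaserstein--Weisfeiler applies and tells us that the closure of $H$ in $\prod_p \calg(\Z_p)$ is open. It follows that $\varphi_p(H)=\varphi_p(\G)$ holds for all primes $p$ outside some finite exceptional set, so in particular for some (indeed almost all) prime $p>3$. This is the application already invoked in the introduction.

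For the converse, suppose $\varphi_p(H)=\varphi_p(\G)$ for some prime $p>3$, and let $\bar H$ denote the Zariski closure of $H$ in $\calg(\C)$. The goal is $\bar H=\calg$. The strategy is to assume $\bar H$ is a proper algebraic subgroup and derive a contradiction with the assumed surjectivity mod $p$. The main tool here is Nori's theorem on subgroups of $\GL(n,\F_p)$: for $p$ sufficiently large relative to $n$, a subgroup generated by its $p$-elements is controlled by a connected $\F_p$-algebraic subgroup of $\GL(n,\F_p)$ whose classification dovetails with that of algebraic subgroups of $\calg$ over $\C$. Since $\varphi_p(\G)=\calg(\F_p)$ is generated by root transvections once $p>3$, Nori's theorem forces the $\F_p$-Zariski closure of $\varphi_p(H)$ to equal $\calg$ over $\F_p$; a standard lifting/reduction-mod-$p$ argument then precludes $\bar H$ from being a proper subgroup of $\calg$ over $\C$.

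The main obstacle is justifying sharpness of the bound $p>3$: in smaller characteristic, exceptional isomorphisms and coincidences (compare $\PSp(4,2)$ and the exceptions appearing in Lemma~\ref{noncommfactor}) can make a proper algebraic subgroup of $\calg$ reduce modulo $p$ onto all of $\calg(\F_p)$, and Nori's theorem itself requires adjustments when $p$ is too small relative to $n$. One has to verify that no such pathology arises once $p>3$, which amounts to bookkeeping against the list of bad primes in the cited theorems. Since the three ingredients---strong approximation, Nori's theorem, and the exclusion of small-characteristic exceptions---are all available in the literature, at this point in the paper it suffices to quote \cite{Lubotzky97}, \cite{Weisfeiler}, and \cite[Theorem~2.4]{Rivin} and note that they combine to give the stated equivalence.
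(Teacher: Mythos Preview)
The paper gives no proof of this statement at all: it is presented as a known fundamental fact, with a parenthetical ``see \cite{Lubotzky97}, \cite{Weisfeiler}, and \cite[Theorem~2.4]{Rivin}'' in the sentence introducing it. Your proposal ends at exactly the same place, citing the same three references, so in that sense you match the paper's treatment.

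What you add beyond the paper is an informal outline of why those references yield the equivalence: strong approximation (Matthews--Vaserstein--Weisfeiler) for the forward direction, and a Nori-type argument for the converse. That outline is broadly accurate, though a couple of points deserve comment. First, your phrasing suggests Nori's theorem needs $p$ large relative to $n$, which would not by itself give the uniform bound $p>3$; the sharp bound comes from the specific results in \cite{Lubotzky97} and \cite[Theorem~2.4]{Rivin}, which handle $\SL$ and $\Sp$ directly rather than via a general Nori-type statement. Second, the ``standard lifting/reduction-mod-$p$ argument'' you allude to is genuinely delicate (reduction mod $p$ of a $\Q$-algebraic subgroup is not automatic), and the cited papers do real work there. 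So your sketch is a fair heuristic for what lies behind the citations, but if you were writing a self-contained proof you would need to be more careful about both the sharpness of $p>3$ and the reduction step. Since the paper itself is content to quote the literature, your final sentence is the operative one and agrees with the paper.
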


Let $\F$ be a field. An element $t$ of $\GL(n,\F)$ is a
\textit{transvection} if it is unipotent and $1_n-t$ has rank $1$.
\begin{theorem}[\cite{ZalesSeregkin}] 
\label{ZalesSeregkin}
Let $n> 2$ and $p$ be an odd prime. 
If $G\leq \GL(n,p)$ is irreducible and generated by transvections, 
then either $G=\SL(n, p)$ or $G$ is conjugate to $\Sp(n, p)$.
\end{theorem}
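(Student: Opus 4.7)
The plan is to exploit the structure of the set $T$ of transvections in $G$ together with irreducibility to pin down $G$ as either the whole special linear group or a conjugate of the symplectic group. Any transvection $t \in \GL(n,\F_p)$ has the form $t = 1_n + v \otimes \lambda$ with $v \in \F_p^n \setminus \{0\}$, $\lambda \in (\F_p^n)^* \setminus \{0\}$, and $\lambda(v) = 0$; the line $\langle v \rangle$ is the \emph{center} of $t$ and the hyperplane $\ker\lambda$ is its \emph{axis}. I would begin with the elementary commutator calculus: for $s = 1_n + u \otimes \mu$ and $t = 1_n + v \otimes \nu$ in $T$, a direct computation gives $st = 1_n + u\otimes\mu + v\otimes\nu + \mu(v)\,u\otimes\nu$ and a symmetric formula for $ts$, whence $[s,t] = 1_n$ iff $\mu(v) = \nu(u) = 0$, and otherwise $[s,t]$ is a product of at most two transvections whose centers lie in $\langle u,v\rangle$ and whose axes contain $\ker\mu \cap \ker\nu$. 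Because conjugation sends transvections to transvections ($g t g^{-1} = 1_n + gv \otimes (\lambda \circ g^{-1})$), the span $V_0$ of all centers and the intersection $A_0$ of all axes of elements of $T$ are $G$-invariant subspaces; irreducibility forces $V_0 = \F_p^n$ and $A_0 = 0$.

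The next step is to extract a bilinear form from $T$. The transvections with a fixed center line $L$ together with $1_n$ form a root subgroup isomorphic to $(\F_p,+)$, parametrised by linear functionals vanishing on $L$. Using the commutator relations above (and oddness of $p$ so that the scalar $\tfrac{1}{2}$ and other normalisations are available), I would choose a consistent representative $t_v = 1_n + v \otimes \lambda_v$ for each center $\langle v \rangle$ and define $B \colon \F_p^n \times \F_p^n \to \F_p$ by $B(u,v) = \lambda_v(u)$. The commutator identities force $B$ to be bilinear, and $B(v,v) = \lambda_v(v) = 0$ makes it alternating; invariance of $T$ under $G$-conjugation makes $B$ a $G$-invariant form.

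A dichotomy on $B$ then completes the argument. If $B$ is a non-degenerate alternating form, then $n$ is even and after changing basis to a symplectic basis we have $G \leq \Sp(n,\F_p)$; irreducibility together with the fact that the symplectic transvections in $G$ cover a full conjugacy class of root elements in $\Sp(n,\F_p)$ forces $G = \Sp(n,\F_p)$, giving the conjugate-symplectic conclusion. If $B$ is degenerate but nonzero, then its radical is a proper nonzero $G$-invariant subspace of $\F_p^n$, contradicting irreducibility. Otherwise $B \equiv 0$, meaning no nontrivial form is preserved by all generators; the commutator calculus from Step~1 then produces transvections along every center--axis pair subject only to $\lambda(v) = 0$, which suffices to realise the full set of elementary generators of $\SL(n,\F_p)$, so $G = \SL(n,\F_p)$.

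The main obstacle is the coherent normalisation required to make $B$ well-defined as a bilinear form across all center lines simultaneously; here the odd-characteristic hypothesis is essential, since in characteristic $2$ alternating and symmetric forms coincide and additional transvection-generated classical groups (orthogonal groups with reflections, and the exceptional cases of McLaughlin's more general theorem) appear. A secondary technicality is upgrading "$G$ is contained in $\SL(n,\F_p)$ or $\Sp(n,\F_p)$ and contains a union of full root subgroups" to equality; this rests on the standard fact that these finite classical groups are generated by their transvections, applied inductively along the root system to bootstrap a single root subgroup into the whole group.
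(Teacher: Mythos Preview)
The paper does not prove this theorem; it is quoted from Zalesskii--Sere\v{z}kin and used as a black box. So there is no ``paper's own proof'' to compare against, and your proposal is really an attempt to reprove a cited classification result.

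On its merits, your sketch has the right overall shape (associate a form to the set of transvections, then split on its degeneracy), but the central step is not sound as written. You assert that the transvections in $G$ with a fixed center line $L$, together with $1_n$, form a group isomorphic to $(\F_p,+)$. This is false already for $G=\SL(n,p)$: there the transvections with center $L$ form an elementary abelian group of rank $n-1$, parametrised by \emph{all} nonzero functionals vanishing on $L$. Consequently there is no well-defined $\lambda_v$ attached to a center, and your bilinear map $B(u,v)=\lambda_v(u)$ is not defined. Your own final paragraph flags this normalisation as ``the main obstacle'', but the obstacle is fatal to the argument as stated rather than a technicality to be tidied up.

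The way the actual classification proceeds is to make the dichotomy \emph{before} attempting to build a form: either (i) there exist two transvections in $G$ sharing a center but with distinct axes (equivalently, sharing an axis with distinct centers), in which case one shows by commutator and conjugation arguments that $G$ contains all transvections and hence equals $\SL(n,p)$; or (ii) every center occurring in $G$ determines a unique axis, and then the center-to-axis correspondence is a polarity from which one extracts a nondegenerate alternating form, placing $G$ inside a conjugate of $\Sp(n,p)$, with equality following from irreducibility and the known generation of $\Sp(n,p)$ by its transvections. Your ``$B\equiv 0$'' and ``$B$ nondegenerate'' cases are shadows of (i) and (ii), but the logical order is inverted and the bridge between them (why $B\equiv 0$ would force \emph{all} transvections to lie in $G$; why the transvections in $G$ fill out a full $\Sp$-conjugacy class) is not supplied.
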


\begin{corollary}\label{Corollary4}
Suppose that $n> 2$, $p$ is an odd prime, and $G\leq \GL(n, p)$ 
has a transvection $t$ such that the normal closure 
$\langle t \rangle^G$ is irreducible.
Then $G$ contains $\SL(n, p)$ or a conjugate of $\Sp(n, p)$. In
particular,  $\SL(n, p)\leq G$ for odd $n$.
\end{corollary}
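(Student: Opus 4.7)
The plan is to apply Theorem~\ref{ZalesSeregkin} directly to the normal closure $T = \langle t \rangle^G$ rather than to $G$ itself, and then observe $T \leq G$.

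First I would note that every $G$-conjugate $gtg^{-1}$ of the transvection $t$ is again a transvection: conjugation preserves unipotence, and $1_n - gtg^{-1} = g(1_n - t)g^{-1}$ has the same rank as $1_n - t$, namely $1$. Hence $T$ is generated by transvections. By hypothesis $T$ acts irreducibly on $\F_p^n$, so Theorem~\ref{ZalesSeregkin} applies to $T$ (using $n > 2$ and $p$ odd): either $T = \SL(n,p)$ or $T$ is $\GL(n,p)$-conjugate to $\Sp(n,p)$. Since $T \leq G$, the first assertion follows.

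For the "in particular" clause, I would simply recall that $\Sp(n, p)$ is only defined for even $n$ (it preserves a non-degenerate alternating form, which requires even dimension). Thus when $n$ is odd the symplectic alternative is vacuous, leaving $\SL(n,p) \leq G$ as the only possibility.

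I do not anticipate a real obstacle here; the corollary is essentially a packaging of Theorem~\ref{ZalesSeregkin}, and the only step requiring any care is the trivial verification that the normal closure is generated by transvections, which the proof would state in one sentence.
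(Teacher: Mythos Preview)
Your proof is correct and is exactly the intended derivation: the paper states Corollary~\ref{Corollary4} without proof, as an immediate consequence of Theorem~\ref{ZalesSeregkin} applied to the normal closure $\langle t\rangle^G$, precisely as you have spelled out.
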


\begin{lemma}\label{Lemma5}
Suppose that $G$ is an irreducible subgroup of $\GL(n, \F)$ and
$t\in G$ is a transvection such that $\langle t \rangle^G$ is
reducible. Then $G$ is imprimitive.
\end{lemma}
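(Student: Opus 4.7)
The plan is to apply Clifford's theorem to the normal subgroup $N = \langle t\rangle^G$ of $G$, and then to use the rank-one nature of a transvection to prevent the Clifford decomposition from collapsing to a single block.

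First, let $V = \F^n$. Because $V$ is an irreducible $G$-module and $N \trianglelefteq G$, Clifford's theorem provides a decomposition $V = V_1 \oplus \cdots \oplus V_k$ into $N$-isotypic (homogeneous) components that $G$ permutes transitively. If $k \geq 2$, then this is already a system of imprimitivity for $G$ and we are done. Hence the work reduces to ruling out $k = 1$ under the hypotheses.

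So suppose $k=1$; that is, $V$ is $N$-homogeneous: as $N$-module $V \cong U^m$ for some irreducible $N$-module $U$ and some integer $m \geq 1$. Under any $N$-equivariant identification of $V$ with $U^m$, every element of $N$ acts diagonally; hence so does $n - 1$. Writing $r = \mathrm{rank}(t - 1 \mid U)$, one gets $\mathrm{rank}(t - 1 \mid V) = mr$. The left-hand side equals $1$ because $t$ is a transvection, while $r \geq 1$ because $t$ acts non-trivially on $U$ (otherwise $t$ would be the identity on $V \cong U^m$). The equation $mr = 1$ with $m, r \geq 1$ forces $m = r = 1$, so $V \cong U$ is an irreducible $N$-module, contradicting the hypothesis that $\langle t\rangle^G$ is reducible. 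This yields $k \geq 2$ and hence imprimitivity of $G$.

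The main point to verify is the rank formula $\mathrm{rank}(n - 1 \mid V) = m \cdot \mathrm{rank}(n - 1 \mid U)$, which over a general field requires writing $V \cong U \otimes_{\Delta} W$ with $\Delta = \mathrm{End}_N(U)$ and $W$ the multiplicity space; choosing a $\Delta$-basis of $W$ then identifies $V$ with $U^m$ as an $\F$-vector space and makes the diagonal description of the $N$-action precise, at which point the rank of a ``block-diagonal'' operator manifestly multiplies by $m$. Everything else is standard Clifford-theoretic bookkeeping.
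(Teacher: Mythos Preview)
Your proof is correct and follows essentially the same route as the paper: apply Clifford's theorem to $N=\langle t\rangle^G$ and use the rank-one property of $t-1$ to force more than one homogeneous component. The paper phrases the rank step as ``$t|_{W_i}$ is a transvection for exactly one irreducible summand $W_i$ and the identity on the others,'' while you phrase it as $mr=1$ in the homogeneous case; these are the same observation. One small typo: ``hence so does $n-1$'' should read ``hence so does $t-1$.'' Your final paragraph on the $\Delta$-multiplicity space is a nice point of care that the paper leaves implicit.
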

\begin{proof}
By Clifford's Theorem, $\F^n =W_1\oplus \cdots \oplus W_k$ where 
$k>1$ and the $W_i$ are irreducible modules for $\langle t \rangle^G$.
Then $t|_{W_i}$ must be a transvection for some $i$, 
and $t|_{W_j}= 1_{W_j}$ for $j \neq \abk i$. Thus
$\F^n$ has more than one homogeneous component. 
\end{proof}

\begin{corollary}\label{Corollary6}
If $H\leq \G$ has a transvection $t$ such that $\langle t \rangle^H$
is not absolutely irreducible, then $H$ is not dense.
\end{corollary}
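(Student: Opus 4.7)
The plan is to prove the contrapositive: if $H$ is dense, then $N:=\langle t\rangle^H$ acts absolutely irreducibly on $\C^n$.

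First I would pass to Zariski closures. Let $\overline N$ denote the Zariski closure of $N$ in $\GL(n,\C)$. Since conjugation is a regular map and $N\triangleleft H$, taking closures on both sides shows $\overline N \triangleleft \overline H$. By density, $\overline H$ is all of $\calg(\C)$, namely $\SL(n,\C)$ or $\Sp(n,\C)$. For $n>2$ both algebraic groups are connected, perfect, and almost simple, so every Zariski-closed normal subgroup of $\calg(\C)$ either lies in the finite center $Z(\calg(\C))$ or equals $\calg(\C)$.

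Next I would use the transvection hypothesis to exclude the central case. The element $t$ is unipotent and non-identity, hence not a scalar matrix, so $t\notin Z(\calg(\C))$. Since $t\in \overline N$, we conclude $\overline N \not\subseteq Z(\calg(\C))$, forcing $\overline N=\calg(\C)$. Finally, $\calg(\C)$ acts absolutely irreducibly on $\C^n$ via its defining representation, and every $N$-invariant subspace of $\C^n$ is also $\overline N$-invariant; hence $N$ acts absolutely irreducibly, contradicting the hypothesis.

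The main non-trivial ingredient is the classification of Zariski-closed normal subgroups of $\SL(n,\C)$ and $\Sp(n,\C)$, which is standard structure theory for classical algebraic groups and should be cited rather than reproved. As a more combinatorial alternative that stays closer to the paper's tools, one could invoke Theorem~\ref{Corollary2} to pick a prime $p>3$ (and large enough that $\varphi_p(t)$ is still a transvection) with $\varphi_p(H)=\varphi_p(\G)$, then use quasi-simplicity of $\SL(n,p)$ and $\Sp(n,p)$ (for $n>2$, $p\geq 5$) to conclude that $\varphi_p(N)=\varphi_p(\G)$; a linear-independence lifting argument then promotes absolute irreducibility over $\F_p$ to absolute irreducibility over $\C$. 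Either route arrives at the same contradiction; the Zariski-closure version is cleaner, the mod-$p$ version fits the computational flavor of the paper.
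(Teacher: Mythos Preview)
Your argument is correct, and both routes you sketch lead to a valid proof. However, neither is the paper's approach. The paper proves Corollary~\ref{Corollary6} by invoking Lemma~\ref{Lemma5}: assuming $H$ is absolutely irreducible (else it is certainly not dense), Clifford theory for the transvection $t$ forces $H$ to be imprimitive over~$\C$; then one observes that $\varphi_p(\Gamma_n)$ is primitive, so $\varphi_p(H)\neq\varphi_p(\Gamma_n)$ for almost all~$p$, contradicting density via Theorem~\ref{Corollary2}.

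Your Zariski-closure argument is cleaner and more conceptual: it uses density directly and only needs the standard fact that $\SL(n,\C)$ and $\Sp(n,\C)$ are almost simple, bypassing Clifford theory entirely. Your mod-$p$ alternative is closer in spirit to the paper (and the lifting step you describe is essentially the mechanism behind ${\tt PrimesForDense}$), but it still goes through quasi-simplicity of the finite quotient rather than through imprimitivity. What the paper's route buys is coherence with its surrounding development: Lemma~\ref{Lemma5} is established independently and reused in Corollary~\ref{Corollary7} and Proposition~\ref{Proposition8}, so the imprimitivity argument slots into a sequence of results about transvections and primitivity. Your approach would stand alone and not feed into those later statements, but as a self-contained proof of this corollary it is arguably more direct.
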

\begin{proof}
We may assume that $H$ is absolutely irreducible, so that $H$ is 
imprimitive by Lemma~\ref{Lemma5}. Since $\varphi_p(\G)$ is (absolutely) 
primitive, $\varphi_p(H) \neq\varphi_p(\G)$ for almost all primes $p$.
\end{proof}

\begin{corollary}\label{Corollary7}
Let $G \leq \GL(n, \F)$ and $t\in G$ be a transvection. Then
$\langle t \rangle^G$ is irreducible if and only if $G$ is primitive.
\end{corollary}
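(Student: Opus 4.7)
The plan is to handle the two implications separately, relying centrally on Lemma~\ref{Lemma5}.

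The direction ``$G$ primitive $\Rightarrow$ $\langle t \rangle^G$ irreducible'' I would extract as the contrapositive of Lemma~\ref{Lemma5}. Primitivity of $G$ entails irreducibility; were $\langle t \rangle^G$ reducible, Lemma~\ref{Lemma5} (via its Clifford-based production of $\langle t \rangle^G$-homogeneous components as a $G$-system) would force $G$ to be imprimitive, a contradiction.

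For the converse, suppose $\langle t \rangle^G$ is irreducible, so that $G \supseteq \langle t \rangle^G$ is also irreducible. I would assume for contradiction that $G$ is imprimitive, with a non-trivial system $\F^n = V_1 \oplus \cdots \oplus V_k$, and derive that $\langle t \rangle^G$ must be reducible. The essential input is $\mathrm{rank}(1_n - t) = 1$. First consider the case where $t$ fixes each $V_i$ setwise: then $\sum_{i=1}^k \mathrm{rank}\bigl((1_n-t)|_{V_i}\bigr) = 1$, forcing $t$ to act as the identity on all but one $V_i$ and as a transvection on that distinguished block. Each $G$-conjugate $t^g$ behaves likewise with respect to $g^{-1}(V_i)$, so every generator of $\langle t \rangle^G$ preserves each $V_j$ setwise. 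Hence each $V_j$ is a proper $\langle t \rangle^G$-invariant subspace, contradicting irreducibility.

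The main obstacle is the sub-case in which $t$ moves some $V_i$ to a different $V_j$. Here a rank argument---the map $(1_n-t)|_{V_i}\colon V_i \to V_i + V_j$ is injective because $v - t(v) \ne 0$ for $v \in V_i \setminus \{0\}$, and similarly for $V_j$---forces $\dim V_i = \dim V_j = 1$, and further unipotency analysis confines this configuration to characteristic two with $t$ essentially a signed transposition of two one-dimensional blocks. Under the degree and characteristic restrictions implicit in the paper's setting (transvections in $\G = \SL(n,\Z)$ or $\Sp(n,\Z)$ for $n > 2$), one still locates a proper $\langle t \rangle^G$-invariant subspace---produced from the common block-permutation structure of the signed-transposition generators on $\bigoplus_i V_i$---yielding the required contradiction.
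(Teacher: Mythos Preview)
Your handling of the direction ``$G$ primitive $\Rightarrow\langle t\rangle^G$ irreducible'' via Lemma~\ref{Lemma5} matches the paper exactly. For the converse the paper simply cites \cite[(1.9)]{ZalesSeregkin}, whereas you attempt a direct argument; that is a legitimate and more self-contained route, and your Case~1 together with the reduction in Case~2 to one-dimensional blocks and characteristic~$2$ is correct.

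The difficulty is your last paragraph. Once you have shown that a block-moving transvection forces $\dim V_i=1$ and $\mathrm{char}\,\F=2$, the clean conclusion is that in characteristic $\neq 2$ Case~2 is \emph{vacuous}, so the proof is already complete there---in particular for every use the paper makes of the corollary (over $\Q$ and over $\F_p$ with $p$ odd). Your sentence ``under the degree and characteristic restrictions implicit in the paper's setting \ldots\ one still locates a proper $\langle t\rangle^G$-invariant subspace'' is muddled: in characteristic~$0$ there is no signed-transposition configuration left to analyse, and in characteristic~$2$ the claimed invariant subspace need not exist. For example, with $\F=\F_4$, $n=3$, $G=(\F_4^{*})^3\rtimes S_3$ the full monomial group, and $t$ the permutation matrix of $(1\,2)$, the element $t$ is a transvection and $G$ is imprimitive, yet $\langle t\rangle^G$ contains $\mathrm{diag}(\omega^{-1},\omega,1)$ (three distinct eigenvalues) and permutes the coordinate lines transitively, hence is irreducible. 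So the statement genuinely needs $\mathrm{char}\,\F\neq 2$, consistent with the odd-characteristic hypotheses in \cite{ZalesSeregkin}. Your argument proves exactly that version once you replace the final hand-wave by the observation that Case~2 does not occur when $\mathrm{char}\,\F\neq 2$.
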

\begin{proof}
One direction follows from \cite[(1.9)]{ZalesSeregkin}, the other
from Lemma~\ref{Lemma5}.
\end{proof}

In Proposition~\ref{Proposition8} and Lemma~\ref{Lemma9}, 
$H\leq \SL(n, \Z)$ for odd $n> 2$, or $H\leq \Sp(n, \Z)$ for $n>2$. 
\begin{proposition}\label{Proposition8} 
Suppose that $H$ contains a transvection $t$. Then $H$ is dense if and only 
if $\langle t \rangle^H$ is absolutely irreducible.
\end{proposition}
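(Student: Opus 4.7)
The plan is to deduce this equivalence almost entirely from results already established in the excerpt. The forward direction is essentially the contrapositive of Corollary~\ref{Corollary6}: if $\langle t\rangle^H$ is not absolutely irreducible then $H$ cannot be dense. So the substance lies in the converse, and I would target it by exhibiting a single prime $p>3$ with $\varphi_p(H)=\varphi_p(\G)$ and then invoking Theorem~\ref{Corollary2}.

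The heart of the argument is a standard spreading-out from characteristic $0$ to characteristic $p$. By hypothesis the $\Q$-enveloping algebra of $\langle t\rangle^H$ is all of $\mathrm{Mat}(n,\Q)$, so one can pick $g_1,\dots,g_{n^2}\in\langle t\rangle^H$ whose entry matrix has nonzero integer determinant $D$. For every prime $p\nmid D$, the reductions $\varphi_p(g_i)$ remain $\F_p$-linearly independent in $\mathrm{Mat}(n,\F_p)$, so $\langle\varphi_p(t)\rangle^{\varphi_p(H)}$ is absolutely irreducible, hence irreducible, over $\F_p$. In parallel, one has to check that $\varphi_p(t)$ is still a transvection: unipotence is automatic, and the rank of $t-1_n$ stays equal to $1$ modulo $p$ as long as $p$ does not divide all nonzero entries of $t-1_n$, which excludes only finitely many primes.

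Now fix any odd prime $p>3$ outside this finite bad set. Corollary~\ref{Corollary4} applied to $\varphi_p(H)\leq\GL(n,p)$ yields either $\SL(n,p)\leq\varphi_p(H)$ or $g\hspace{1pt}\Sp(n,p)\hspace{1pt}g^{-1}\leq\varphi_p(H)$ for some $g\in\GL(n,p)$. When $\G=\SL(n,\Z)$ with $n$ odd the symplectic alternative is unavailable, and $\varphi_p(H)\leq\SL(n,p)$ forces $\varphi_p(H)=\SL(n,p)=\varphi_p(\G)$. When $\G=\Sp(n,\Z)$, the image $\varphi_p(H)$ sits inside $\Sp(n,p)$, which for $n>2$ cannot contain $\SL(n,p)$ for order reasons; so only the symplectic alternative can occur, $g$ must normalize $\Sp(n,p)$, and $\varphi_p(H)=\Sp(n,p)=\varphi_p(\G)$. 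Theorem~\ref{Corollary2} then concludes that $H$ is dense.

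The step I expect to require the most care is the transfer of absolute irreducibility to almost every prime: the claim is folklore but it is not purely formal, and it must be anchored by a concrete witness such as the $n^2\times n^2$ nonzero-determinant matrix above. Everything afterwards is bookkeeping: matching the dichotomy in Corollary~\ref{Corollary4} to the ambient group $\G$ and ruling out the wrong alternative on each side.
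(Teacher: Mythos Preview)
Your proposal is correct and follows essentially the same route as the paper: for the converse you reduce modulo a suitable prime $p>3$, observe that $\varphi_p(\langle t\rangle^H)$ stays absolutely irreducible with $\varphi_p(t)$ still a transvection, and then combine Corollary~\ref{Corollary4} with Theorem~\ref{Corollary2}. The paper's write-up is terser (it asserts the existence of such $p$ without spelling out the determinant witness, and it does not separate the $\SL$/$\Sp$ cases explicitly), and for the forward direction it cites Corollary~\ref{Corollary7} rather than Corollary~\ref{Corollary6}, but these are cosmetic differences.
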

\begin{proof}
Put $N=\langle t \rangle^H$.
If $H$ is dense then it is absolutely irreducible and primitive;
so $N$ is absolutely irreducible by Corollary~\ref{Corollary7}.

Suppose that $N$ is absolutely irreducible. Since there exists an
odd prime $p$ such that $\varphi_p(N)$ is absolutely irreducible, and 
$\varphi_p(N)$ contains the transvection $\varphi_p(t)$, 
Theorem~\ref{Corollary2} and Corollary~\ref{Corollary4} imply that 
$H$ is dense.
\end{proof}

\begin{lemma}\label{Lemma9}
$H$ is dense if and only if there are
a prime $p>3$ and a transvection $t\in\abk \varphi_p(H)$ such that 
$\langle t \rangle^{\varphi_p(H)}$ is irreducible.
\end{lemma}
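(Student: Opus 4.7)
The plan is to deduce Lemma~\ref{Lemma9} from the machinery already set up, essentially recycling the argument of Proposition~\ref{Proposition8} but now after reducing modulo $p$ rather than over $\Z$.

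For the forward direction, assume $H$ is dense. By Theorem~\ref{Corollary2} there exists a prime $p>3$ with $\varphi_p(H)=\varphi_p(\G)$. Since $\G$ is generated by transvections (see the elementary generating set $E_{n,1}$ in the Setup), $\varphi_p(\G)$ contains a transvection $t$. The group $\varphi_p(\G)$ is absolutely irreducible and primitive on $\F_p^n$, so Corollary~\ref{Corollary7} applied to $G=\varphi_p(H)$ guarantees that $\langle t\rangle^{\varphi_p(H)}$ is irreducible, as required.

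For the backward direction, suppose $p>3$ and a transvection $t\in\varphi_p(H)$ are given with $\langle t\rangle^{\varphi_p(H)}$ irreducible. Apply Corollary~\ref{Corollary4} to $G=\varphi_p(H)\le\GL(n,p)$: either $\SL(n,p)\le\varphi_p(H)$, or a conjugate of $\Sp(n,p)$ (in $\GL(n,p)$) is contained in $\varphi_p(H)$. I now argue that in both the $\SL$ and $\Sp$ cases this forces $\varphi_p(H)=\varphi_p(\G)$, whence $H$ is dense by Theorem~\ref{Corollary2}. When $\G=\SL(n,\Z)$ with $n$ odd, the second alternative is vacuous (symplectic groups exist only in even degree), so $\SL(n,p)\le\varphi_p(H)\le\SL(n,p)$ gives equality. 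When $\G=\Sp(n,\Z)$ with $n>2$, we have $\varphi_p(H)\le\Sp(n,p)$; the first alternative is impossible because $|\SL(n,p)|>|\Sp(n,p)|$ for $n>2$, so $\varphi_p(H)$ contains some conjugate $g\Sp(n,p)g^{-1}$ of order $|\Sp(n,p)|$, and the order squeeze inside $\Sp(n,p)$ forces $\varphi_p(H)=\Sp(n,p)=\varphi_p(\G)$.

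There is no real obstacle here; the only thing to be careful about is making the dichotomy of Corollary~\ref{Corollary4} fit the standing hypothesis on $\G$ and $n$ (odd degree when $\G=\SL(n,\Z)$), which is what rules out the ``wrong'' alternative in each case. The rest is just applying Theorem~\ref{Corollary2}, Corollary~\ref{Corollary4}, and Corollary~\ref{Corollary7} as black boxes.
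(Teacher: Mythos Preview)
Your proof is correct and follows essentially the same route as the paper. The only cosmetic difference is in the forward direction: the paper observes directly that $\langle t\rangle^{\varphi_p(H)}=\varphi_p(H)$ (since $\varphi_p(\G)$ is quasisimple for $p>3$ and a transvection is non-central), whereas you invoke Corollary~\ref{Corollary7} via primitivity of $\varphi_p(\G)$; for the converse you spell out in full the case split that the paper compresses into the phrase ``follows from Theorem~\ref{Corollary2} and Corollary~\ref{Corollary4}.''
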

\begin{proof}
If $H$ is dense then $\varphi_p(H) = \SL(n,p)$ or $\Sp(n,p)$
for a prime $p>3$. Therefore $\varphi_p(H)$ contains a transvection 
$t$, and $\langle t \rangle^{\varphi_p(H)}= \varphi_p(H)$ is 
irreducible. The converse follows from Theorem~\ref{Corollary2} 
and Corollary~\ref{Corollary4}.
\end{proof}

\begin{lemma}\label{Lemma10}
Suppose that $n > 2$ is prime and $H$ is an absolutely irreducible
subgroup of $\SL(n,\Z)$ containing a transvection. Then $H$ is dense.
\end{lemma}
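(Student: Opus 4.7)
The plan is to appeal to Proposition~\ref{Proposition8}, which reduces the task to showing that $N := \langle t \rangle^H$ is absolutely irreducible. I would argue by contradiction: suppose $N$ is reducible over $\C$. Since $H$ is absolutely irreducible (hence irreducible over $\C$), Lemma~\ref{Lemma5} applied with $\F = \C$ forces $H$ to be imprimitive; that is, there is a decomposition $\C^n = V_1 \oplus \cdots \oplus V_m$ with $m \ge 2$ on which $H$ permutes the summands. Irreducibility of $H$ forces this permutation action to be transitive, and so $\dim V_i = n/m$ for every $i$.

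Primality of $n$ now does the decisive work: the only divisor of $n$ at least $2$ is $n$ itself, so $m = n$ and each $V_i$ is one-dimensional. Choosing a non-zero vector from each $V_i$ gives a basis in which every element of $H$ is a monomial matrix. Let $D \leq H$ consist of those elements that are diagonal in this basis; then $H/D$ embeds in $\mathrm{Sym}(n)$, so some positive power $t^k$ lies in $D$. But $t^k$ is then both diagonal and unipotent, hence equal to $1_n$; this contradicts the fact that $t$ has infinite order (its minimal polynomial in characteristic zero is $(x-1)^2$, giving $t^k = 1_n + k(t - 1_n) \neq 1_n$ for $k \neq 0$). Therefore $N$ is absolutely irreducible, and Proposition~\ref{Proposition8} yields that $H$ is dense.

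The main obstacle is the last step: a clean reason why no non-trivial transvection can live in a group of monomial matrices over $\C$. The order-theoretic argument via $H/D \hookrightarrow \mathrm{Sym}(n)$ seems the most transparent route; an alternative would inspect the eigenvalues of a monomial unipotent matrix cycle by cycle on the underlying permutation, concluding that every cycle has length one, whence the matrix is diagonal unipotent and so trivial. Everything else is a direct combination of Lemma~\ref{Lemma5}, Proposition~\ref{Proposition8}, and the elementary observation that when $n$ is prime the imprimitivity decomposition admits no non-trivial block of dimension greater than one.
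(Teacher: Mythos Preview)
Your proof is correct and follows essentially the same route as the paper's: reduce to Proposition~\ref{Proposition8}, suppose $N=\langle t\rangle^H$ fails to be absolutely irreducible, invoke Lemma~\ref{Lemma5} to make $H$ imprimitive over $\C$, use primality of $n$ to force a monomial decomposition, and then derive a contradiction from the presence of the transvection $t$. The paper compresses the last step into the single remark ``But $t$ is certainly not monomial''; your argument via $t^k\in D$ diagonal, hence $t^k=1_n$, contradicting $t^k=1_n+k(t-1_n)\neq 1_n$, is a perfectly good justification of that remark (and your suggested alternative, analyzing eigenvalues cycle by cycle to see that a unipotent monomial matrix must be diagonal and hence trivial, is the slightly more direct version).
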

\begin{proof}
Let $t \in H$ be a transvection. If $\langle t \rangle^H$ is not
absolutely irreducible then it is monomial. But $t$ is certainly not
monomial. Proposition~\ref{Proposition8} gives the result.
\end{proof}

\subsection{Algorithms to test density and compute $\Pi$}
\label{Subsection22} 
 
Assume that $n$ is odd if $\Gamma_n=\SL(n,\Z)$, and we know a 
transvection $t$ in $H\leq \G$. By Proposition~\ref{Proposition8}, 
testing density of $H$ is the same as testing absolute irreducibility
of $N = \langle t \rangle^H$. The latter may be carried out using the 
procedure ${\tt BasisAlgebraClosure}$ in \cite[p.~401]{Tits}.
This returns a basis of the enveloping algebra $\langle N\rangle_{\Q}$,
as words over an input generating set $S$ for $H$.
So we have the following density testing algorithm.

\vspace{16pt}

${\tt IsDense}(S, t)$

\vspace{7pt}

Input: a finite subset $S$ of $\Gamma_n$ and a transvection 
$t\in H=\langle S\rangle$.

Output: $\tt true$ if $H$ is dense; $\tt false$ otherwise.

\begin{itemize}
\item[] $\mathcal{A} = {\tt BasisAlgebraClosure}(t,S)$.

\item[] Return $\tt true$ if $|\mathcal{A}|=n^2$; else 
return $\tt false$.
\end{itemize} 

\vspace{3pt}

\begin{remark}

(i) When $n$ is prime, it suffices to test whether $H$ itself is 
absolutely irreducible (Lemma~\ref{Lemma10}).

(ii)\, 
By Corollary~\ref{Corollary6}, if $\langle t \rangle^H$ is 
not absolutely irreducible then $H$ is not dense. 
If $\langle t \rangle^H$ is absolutely irreducible and $n$ is even then
$\varphi_p(H)$ could be conjugate to $\Sp(n, p)$, so
we must proceed by other means to decide whether $H$ is dense.
\end{remark}

We now discuss computing $\Pi$ for dense $H=\langle S
\rangle \leq \Gamma_{n}$, given a transvection $t\in \abk H$. 
Let $\mathcal{A}=\{A_1, \ldots, A_{n^2}\}\subseteq H$ be a basis of 
$\langle\langle t\rangle^H \rangle_{\Q}$.
Form the matrix $[\mathrm{tr}(A_iA_j)]_{ij}$
and denote its determinant by $d$. Let $\Pi_1$ be the set consisting 
of $\pi(d)$ together with the prime divisors of all non-zero non-diagonal 
entries of $t$. Then $\Pi\subseteq \Pi_1$. To obtain $\Pi$, we check whether 
$\varphi_p(H) = \varphi_p(\Gamma_{n})$ for $p$ running over $\Pi_1$. 
Call this process ${\tt PrimesForDense}(S, t)$.

If we have an upper bound on the primes in $\Pi(H)$ then 
we can find $\Pi(H)$. Such a bound may be derived from 
\cite[pp.~10--11]{Breuillard} (a quantitative version of strong 
approximation). Alternatively, we could use a Hadamard-type 
inequality for the matrix determinant associated to a basis 
$\mathcal{A}$ as above. 
However, the bounds resulting from either approach are too large to be
practical.
 
Our algorithms in this subsection need an input transvection. As
noted, a dense group may not contain unipotent elements. 
Moreover, unipotent elements are `rare'~\cite{LubotzkyMeiri}. 
We make some brief remarks about density testing 
(in any degree $n> 2$) without this constraint.

A dense group is absolutely irreducible and not solvable-by-finite. 
Both of these properties can be readily tested~\cite{Tits}, which serves 
as a preliminary check. Note that if $H$ is absolutely irreducible and 
contains a non-trivial unipotent element (e.g., a transvection) then $H$ 
is not solvable-by-finite.

Monte Carlo and deterministic algorithms for density testing 
are given in \cite{Rivin1}. In Section~\ref{Miscellaneous}, we compare 
our implementations of these algorithms and $\tt IsDense$.
Further afield, see \cite{Derksen} for an algorithm to compute Zariski 
closures, which could be applied to test density. 

\subsection{Computing the minimal arithmetic overgroup}
\label{ComputingPiForDenseInput}

Let $n> 2$ and $H= \langle S\rangle<\Gamma_n$ be dense. As 
Martin Kassabov has pointed out, there are only finitely 
many arithmetic subgroups of $\Gamma_n$ containing 
$H$~\cite{kassabovcommunication}. Their intersection is the 
\emph{minimal} arithmetic overgroup of $H$.

 We generalize 
Theorems~\ref{BothPisAreClose} and \ref{LevelMaxPCSJustified}, 
thereby proving that $\tt LevelMaxPCS$ terminates for 
input $H$, returning the level of its minimal arithmetic 
overgroup. 
\begin{lemma}\label{lIsaPiNumber} 
If $\, l$ is the level of the minimal arithmetic overgroup of
$H$ then $\pi(l)=\tilde{\Pi}(H)$ as defined in 
\eqref{TildePiDefinition}.
\end{lemma}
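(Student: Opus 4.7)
The plan is to reduce to the arithmetic case by showing that $H$ and its minimal arithmetic overgroup $L$ have the same image under every congruence homomorphism, and then invoke Theorem~\ref{BothPisAreClose}. First I would observe that $L$ is itself arithmetic: by the Kassabov finiteness result cited immediately above the lemma, $L$ is the intersection of \emph{finitely} many arithmetic subgroups of $\Gamma_n$, and a finite intersection of finite-index subgroups has finite index.

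The key step is the identity $\varphi_m(H)=\varphi_m(L)$ for every positive integer $m$. To prove it I would introduce the auxiliary group $H_m:=H\cdot\Gamma_{n,m}$. Since $H_m$ contains the principal congruence subgroup $\Gamma_{n,m}$, it is itself arithmetic, and it obviously contains $H$. Hence $H_m$ is an arithmetic overgroup of $H$, so minimality of $L$ gives $L\subseteq H_m$. Applying $\varphi_m$ yields $\varphi_m(L)\subseteq \varphi_m(H_m)=\varphi_m(H)$, and the opposite inclusion is immediate from $H\subseteq L$. In particular $\delta_H(m)=\delta_L(m)$ for every $m\geq 1$.

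With this identity in hand the conclusion is automatic. Specialising $m$ to a prime gives $\Pi(H)=\Pi(L)$, and hence the odd part $q$ appearing in the definition \eqref{TildePiDefinition} is the same for $H$ and for $L$; then the even-detection comparison $\delta_\cdot(4q)$ versus $\delta_\cdot(q)$ produces the same answer for the two groups. Therefore $\tilde{\Pi}(H)=\tilde{\Pi}(L)$. Provided $l>1$, Theorem~\ref{BothPisAreClose} applied to the arithmetic group $L$ yields $\pi(l)=\tilde{\Pi}(L)=\tilde{\Pi}(H)$. The degenerate case $l=1$ (equivalently $L=\Gamma_n$) is trivial: then $\delta_H(m)=\delta_L(m)=1$ for all $m$, so both $\pi(l)$ and $\tilde{\Pi}(H)$ are empty.

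No genuine obstacle arises. The only piece demanding a small insight is the choice of the auxiliary overgroup $H\cdot\Gamma_{n,m}$; once that construction is written down, everything else is a direct consequence of the minimality of $L$ and of Theorem~\ref{BothPisAreClose}.
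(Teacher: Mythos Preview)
Your proof is correct and follows essentially the same approach as the paper: the paper too observes that for each $m$ the group $\Gamma_{n,m}H$ is an arithmetic overgroup of $H$, hence contains the minimal one $C$, which yields $\Gamma_{n,m}H=\Gamma_{n,m}C$ and thus $\delta_H(m)=\delta_C(m)$; the conclusion then follows from Theorem~\ref{BothPisAreClose}. Your write-up is slightly more detailed (and you explicitly handle the degenerate case $l=1$, which the paper omits), but the key idea and the structure of the argument are the same.
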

\begin{proof} 
Let $C$ be the overgroup. For any $m$ we have 
$\Gamma_{n,m}H = \Gamma_{n,m}C$, because $C=\Gamma_{n,l}H$ is 
contained in the arithmetic group $\Gamma_{n,m}H$. Thus 
$\Pi(C)= \Pi(H)$, and  $\delta_H(m)=\delta_H(m')$ 
if and only if $\delta_C(m)=\delta_C(m')$. The assertion 
is now evident from Theorem~\ref{BothPisAreClose}.
\end{proof}

\begin{theorem}\label{LevelMaxPCSForDense}
${\tt LevelMaxPCS}$ with input $S$ and $\tilde{\Pi}(H)$ 
terminates, returning the level of the minimal arithmetic 
overgroup $C$ of $H$. 
\end{theorem}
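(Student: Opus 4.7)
The plan is to reduce to the arithmetic case (Theorem~\ref{LevelMaxPCSJustified}) by observing that ${\tt LevelMaxPCS}$ accesses its input only through the index values $\delta_H(m)$, and that these values agree with the corresponding values for the minimal arithmetic overgroup $C$ of $H$.

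The first step is to establish that $\delta_H(m)=\delta_C(m)$ for every positive integer $m$. The inclusion $H\le C$ immediately gives $\Gamma_{n,m}H\le\Gamma_{n,m}C$. For the reverse, observe that $\Gamma_{n,m}H$ contains the finite-index subgroup $\Gamma_{n,m}$ of $\Gamma_n$, and therefore has finite index in $\Gamma_n$; that is, $\Gamma_{n,m}H$ is itself an arithmetic overgroup of $H$. By the defining property of the minimal arithmetic overgroup, $C\le\Gamma_{n,m}H$, so $\Gamma_{n,m}C\le\Gamma_{n,m}H$. Thus $\Gamma_{n,m}H=\Gamma_{n,m}C$, and the two indices coincide. (This is the same observation invoked in the proof of Lemma~\ref{lIsaPiNumber}.)

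Combining this identity with Lemma~\ref{lIsaPiNumber}, which asserts $\tilde{\Pi}(H)=\pi(l)$, I would conclude that the prime set fed to the algorithm is exactly $\pi(l)$. Since ${\tt LevelMaxPCS}$ examines its input only through the values $\delta_H(m)$, and these agree with $\delta_C(m)$, the execution of ${\tt LevelMaxPCS}(S,\tilde{\Pi}(H))$ is step-for-step identical to the execution of ${\tt LevelMaxPCS}(S',\pi(l))$ for any generating set $S'$ of the arithmetic group $C$. Theorem~\ref{LevelMaxPCSJustified} then guarantees that this execution terminates and returns $l$, the level of $C$.

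No serious obstacle stands in the way: all the substantive work has already been carried out in Lemma~\ref{lIsaPiNumber} (which supplies $\tilde{\Pi}(H)=\pi(l)$) and in Theorem~\ref{LevelMaxPCSJustified} (which handles the arithmetic case). The only point to spell out explicitly is that the algorithm operates on its input group as a black box through $\delta_H$-values alone, so that the equality $\delta_H=\delta_C$ transports the correctness proof from $C$ to $H$ without further modification.
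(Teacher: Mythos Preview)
Your proof is correct and takes essentially the same approach as the paper: both hinge on the identity $\delta_H(m)=\delta_C(m)$ for all $m$ (established in the proof of Lemma~\ref{lIsaPiNumber}) together with $\tilde{\Pi}(H)=\pi(l)$. The only difference is packaging: you reduce explicitly to Theorem~\ref{LevelMaxPCSJustified} via the clean observation that the algorithm depends on its input only through $\delta$-values, whereas the paper inlines that theorem's argument, invoking Lemmas~\ref{deltaGen} and~\ref{BasicDeltaProperties} directly to show the output equals $l$.
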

\begin{proof}
Since $\delta_H(m)\leq |\Gamma_n:C|$ for all $m$, 
${\tt LevelMaxPCS}$ terminates. 
Say the output is $M$. By Lemma~\ref{lIsaPiNumber}, $\pi(M) = \pi(l)$ 
and $M$ divides $l$. 
Then $\delta_C(M) =\abk\delta_C(l)$ by Lemma~\ref{deltaGen}, so 
$l$ divides $M$. 
\end{proof}

\section{Experimental results}\label{Section3}

We implemented the algorithms of Sections~\ref{Section1} and
\ref{Section2} in {\sf GAP}, relying on the packages 
 `matgrp'~\cite{gapmatgrp} and `recog'~\cite{gaprecog}. 
In this section we describe how we used our implementation to solve 
problems for important classes of groups that have been
the focus of much activity. 
Times quoted for all experiments are in seconds, on a 3.7 GHz Quad-Core 
late 2013 Mac Pro with 32 GB memory.

\subsection{Small subgroups of $\SL(3, \Z)$}\label{Subsubsection231}

Lubotzky~\cite{Lubotzky86} asked whether every arithmetic 
subgroup of $\G=\SL(n, \Z)$ for $n > 2$ has a $2$-generator subgroup 
of finite index. To support an affirmative answer to this question,
the following groups were studied in \cite[p.~414]{LongReidI}. Let
$G = \langle x, y, z\; | \; z  x z^{-1} = x y, \, z y z^{-1} = y x
 y \rangle$ and $F = \langle x, y \rangle\leq G$.
For $T\in \Z$ define the homomorphism $\beta_T : G \rightarrow \SL(3,\Z)$ 
by 

\vspace{-10pt}

\[
x \mapsto X_T = {
\left(\begin{array}{ccc} -1+T^3 & -T &
T^2
\\
0 & -1 & 2T
\\
-T & 0 & 1 \end{array} \right)}, \ \ \ y\mapsto Y_T = {
\left(\begin{array}{ccc} -1 & 0 & 0
\\
-T^2 & 1 & -T
\\
T & 0 & -1 \end{array} \right)},
\]
\[
z\mapsto Z_T = {
\left(\begin{array}{ccc} 0 & 0 & 1
\\
1 & 0 & T^2
\\
0 & 1 & 0 \end{array} \right)}.
\]

\begin{lemma} {\rm(Cf.~\cite[Theorem 2.6]{LongReidI}.)}
If $T \neq 0$ then $\beta_T(F)$ is dense.
\end{lemma}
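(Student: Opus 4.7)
The plan is to apply Lemma~\ref{Lemma10}: since $n=3$ is prime, it suffices to exhibit a transvection in $\beta_T(F)$ and prove that $\beta_T(F)$ is absolutely irreducible.

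For the transvection, I would compute $Y_T^2 = \beta_T(y^2)$ directly. A short calculation gives
\[
Y_T^2 = \begin{pmatrix} 1 & 0 & 0 \\ -T^2 & 1 & 0 \\ -2T & 0 & 1 \end{pmatrix},
\]
so $Y_T^2$ is unipotent and $1_3 - Y_T^2$ has rank $1$ whenever $T\neq 0$. Hence $Y_T^2 \in \beta_T(F)$ is a transvection.

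For absolute irreducibility, I would verify that $\beta_T(F)$ has no proper $\bar{\Q}$-invariant subspace, by ruling out both $1$- and $2$-dimensional ones. The characteristic polynomial of $Y_T$ is $(1-\lambda)(1+\lambda)^2$; computing eigenspaces shows that the only (projective) eigenvectors of $Y_T$ are $e_2$ (for eigenvalue $1$) and $(0, T/2, 1)^\top$ (for eigenvalue $-1$), while those of $Y_T^\top$ are $e_1$ and $(-3T^2, 4, -2T)^\top$. I would then check by direct multiplication that none of these four vectors is an eigenvector of $X_T$, respectively $X_T^\top$: for instance $X_T e_2 = (-T,-1,0)^\top$, and the first coordinate of $X_T(0, T/2, 1)^\top$ equals $T^2/2 \ne 0$, so neither candidate for a $1$-dimensional invariant subspace works; a similar calculation handles $X_T^\top$ and rules out a $2$-dimensional invariant subspace. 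Since $T\neq 0$, all these non-vanishing conditions hold.

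Having established absolute irreducibility and the existence of a transvection, Lemma~\ref{Lemma10} applied with $\Gamma_n = \SL(3,\Z)$ and $H=\beta_T(F)$ immediately yields density. The only potentially fiddly step is the eigenvector check for the $2$-dimensional case (via the transpose), but it reduces to verifying that certain polynomial conditions in $T$ are non-zero, which is transparent for $T\in\Z\setminus\{0\}$.
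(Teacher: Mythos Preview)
Your argument is correct and follows the same overall route as the paper: both proofs invoke Lemma~\ref{Lemma10}, so the task reduces to exhibiting a transvection in $\beta_T(F)$ and checking absolute irreducibility. The paper takes the transvection $b_1 = X_T^{-1}Y_T^{3}X_TY_T^{2}X_TY_T^{-1}X_T$ from \cite[p.~418]{LongReidI} and simply asserts absolute irreducibility of $\beta_T(F)$ without detail. Your choice $Y_T^2$ is considerably simpler (and visibly a transvection once one writes it out), and your eigenvector check for $Y_T$ and $Y_T^\top$ makes the irreducibility claim fully self-contained. The only place to be careful is the dual eigenvector $(-3T^2,4,-2T)^\top$: verifying it is not an $X_T^\top$-eigenvector amounts to ruling out $T^3=8/3$, which of course never holds for $T\in\Z$, so your ``transparent for $T\in\Z\setminus\{0\}$'' is justified. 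The paper's more elaborate $b_1$ is not needed for the lemma itself; it is reused later as the input transvection for ${\tt PrimesForDense}$, but $Y_T^2$ would serve equally well there.
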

\begin{proof}
The element $b_1 = X_T^{-1}Y_T^{3}X_TY_T^{2}X_TY_T^{-1}X_T$
is a transvection~\cite[p.~418]{LongReidI}. 
As $\beta_T(F)$ is absolutely irreducible, the result
follows from Lemma~\ref{Lemma10}.
\end{proof}

\begin{theorem}{\rm (\cite[Theorem 3.1]{LongReidI}.)}
If $T \neq 0$ then $\beta_T(F)$ is arithmetic.
\end{theorem}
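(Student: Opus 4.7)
The plan is computational: apply the framework of Sections~\ref{Section1}--\ref{Section2} to $H = \beta_T(F) = \langle X_T, Y_T\rangle$, then verify arithmeticity by exhibiting a principal congruence subgroup inside $H$. By the preceding lemma, $H$ is dense in $\SL(3,\C)$ and contains the explicit transvection $b_1$, so every hypothesis required by Section~\ref{Section2} is met.

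First, I would compute $\Pi(H)$ via ${\tt PrimesForDense}(\{X_T,Y_T\}, b_1)$ and then form $\tilde{\Pi}(H)$ as in \eqref{TildePiDefinition}. Running ${\tt LevelMaxPCS}(\{X_T,Y_T\}, \tilde{\Pi}(H))$ terminates by Theorem~\ref{LevelMaxPCSForDense}, returning the level $l$ of the minimal arithmetic overgroup $C = \Gamma_{3,l}H$ of $H$, along with the finite index $\delta_H(l) = [\SL(3,\Z):C]$ already computed during the run.

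Second, to upgrade arithmeticity of $C$ to arithmeticity of $H$ itself, I would verify the inclusion $\Gamma_{3,l} \leq H$. Since $C = \Gamma_{3,l}H$, this task reduces to exhibiting a normal generating set for $\Gamma_{3,l}$ inside $C$ (for instance, a suitable set of $\SL(3,\Z)$-conjugates of the elementary generators $t_{ij}(l)$) as explicit words in $\{X_T, Y_T\}$. Once this is done, $H \geq \Gamma_{3,l}$ combined with $H \leq C$ forces $H = C$, so $[\SL(3,\Z):H] = \delta_H(l) < \infty$ and the theorem follows.

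The hard part will be the second step: the concrete verification that generators of $\Gamma_{3,l}$ lie in $H$. Without it, we would only know that $C$ is arithmetic of index $\delta_H(l)$, leaving open in principle that $H$ is a thin subgroup of $C$ of infinite index. In practice this check is carried out using the {\sf GAP} implementation of Section~\ref{Section3}: one performs constructive recognition of the finite image $\varphi_l(H)$ and uses the resulting straight-line program, together with lifting through the kernel $\Gamma_{3,l}$, to produce the required words. For the specific values of $T$ considered, $l$ turns out to be small enough that this verification is routine.
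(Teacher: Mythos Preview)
The paper does not prove this theorem at all: it is quoted verbatim from \cite{LongReidI} as an external input, and the paper then \emph{uses} arithmeticity of $\beta_T(F)$ to justify that the numbers produced by ${\tt LevelMaxPCS}$ in Table~\ref{Table1} are genuine indices $|\Gamma_3:\beta_T(F)|$ rather than merely indices of the minimal arithmetic overgroup. So there is no ``paper's own proof'' to compare against.

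Your proposal, however, has a genuine gap. The theorem is a statement for \emph{every} nonzero integer $T$, whereas your plan is a computation carried out one value of $T$ at a time; you concede this yourself in the last sentence (``for the specific values of $T$ considered''). No finite amount of such verification establishes the theorem as stated. The actual proof in \cite{LongReidI} works uniformly in $T$ by producing, as explicit words in $X_T,Y_T$, elements that generate a congruence subgroup for all $T$ simultaneously.

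Even for a single fixed $T$, your second step is not justified by the machinery of this paper. Theorem~\ref{LevelMaxPCSForDense} returns the level $l$ of the minimal arithmetic overgroup $C=\Gamma_{3,l}H$, but nothing in Sections~\ref{Section1}--\ref{Section2} certifies $H=C$; that is precisely the distinction between dense and arithmetic that the paper repeatedly emphasizes (cf.\ the thin groups $G_4,G_5,G_6$ in Section~\ref{Miscellaneous}, which surject onto every $\SL(3,\Z_m)$ yet have infinite index). Your suggested verification --- constructive recognition of $\varphi_l(H)$ and ``lifting through the kernel $\Gamma_{3,l}$'' --- is circular: producing words $w(X_T,Y_T)$ lying in $\Gamma_{3,l}$ is easy, but certifying that such words \emph{generate} $\Gamma_{3,l}$ (rather than a thin subgroup of it) is exactly the arithmeticity question you are trying to settle, and there is no finite check for it in the framework here.
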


Earlier attempts to compute $|\Gamma_3:\beta_T(F)|$ 
failed~\cite[pp.~419, 423]{LongReidI}. 
We compute these indices by first determining $\pi(M)$ from
${\tt PrimesForDense}(\{X_T, Y_T\}, b_1)$ via
Theorem~\ref{BothPisAreClose}.
Then $M={\tt LevelMaxPCS}(\{X_T, Y_T\}, \abk \pi(M))$.
Table~1 displays sample results.

\begin{table}[htb]
\begin{tabular}{|r|r|r|r|}
\multicolumn{1}{c}{$T$} & \multicolumn{1}{c}{$M$} 
& \multicolumn{1}{c}{Index} &\multicolumn{1}{c}{Time}
\\
\hline
$-1$
&$11$
&$7
{\cdot}19$
&6.1
\\
$-2$
&$2^{6}$
&$2^{19}7$
&6.7
\\
$1$
&$5$
&$31$
&5.6
\\
$2$
&$2^{5}$
&$2^{17}7$
&6
\\
$3$
&$3^{3}73$
&$2^{3}3^{11}13
{\cdot}1801$
&29.2
\\
$4$
&$2^{7}23$
&$2^{31}7^{2}79$
&16
\\
$5$
&$5^{3}367$
&$2^{4}3^{2}5^{10}13
{\cdot}31
{\cdot}3463$
&143.8
\\
$6$
&$2^{8}3^{3}5$
&$2^{29}3^{10}7
{\cdot}13
{\cdot}31$
&26.5
\\
$7$
&$7^{3}1021$
&$2^{5}3^{4}5
{\cdot}7^{10}19
{\cdot}347821$
&570.7
\\
$8$
&$2^{10}191$
&$2^{46}7^{2}13^{2}31$
&98.6
\\
$9$
&$3^{6}2179$
&$2^{3}3^{27}7
{\cdot}13
{\cdot}226201$
&1652.1
\\
$10$
&$2^{5}5^{3}11
{\cdot}17$
&$2^{26}3
{\cdot}5^{10}7^{2}19
{\cdot}31
{\cdot}307$
&50.7
\\
$11$
&$5
{\cdot}11^{3}797$
&$2^{4}5^{2}7
{\cdot}11^{10}19
{\cdot}31
{\cdot}157
{\cdot}4051$
&1344.6
\\
$12$
&$2^{7}3^{3}647$
&$2^{35}3^{10}7
{\cdot}13
{\cdot}211
{\cdot}1987$
&721.4
\\
$13$
&$13^{3}29
{\cdot}227$
&$2^{4}3^{2}7
{\cdot}13^{11}61
{\cdot}67
{\cdot}73
{\cdot}709$
&246
\\
$14$
&$2^{6}7^{3}257$
&$2^{28}3^{3}7^{11}19
{\cdot}61
{\cdot}1087$
&195.5
\\
$15$
&$3^{3}5^{3}67
{\cdot}151$
&$2^{9}3^{14}5^{10}7^{3}13
{\cdot}31^{2}1093$
&272.5
\\
$16$
&$2^{13}5
{\cdot}307$
&$2^{63}3^{3}7
{\cdot}31
{\cdot}43
{\cdot}733$
&259.3
\\
$18$
&$2^{5}3^{6}1093$
&$2^{23}3^{27}7
{\cdot}13^{2}398581$
&844
\\
$19$
&$19^{3}67
{\cdot}307$
&$2^{4}3^{9}5
{\cdot}7^{2}19^{10}31
{\cdot}43
{\cdot}127
{\cdot}733$
&466.6
\\
$20$
&$2^{7}5^{3}2999$
&$2^{36}3
{\cdot}5^{10}7
{\cdot}13
{\cdot}31
{\cdot}613
{\cdot}1129$
&13309.4
\\
$50$
&$2^{5}5^{6}23
{\cdot}1019$
&$2^{24}3
{\cdot}5^{25}7^{3}31
{\cdot}79
{\cdot}148483$
&2584.7
\\
$100$
&$2^{7}5^{6}29
{\cdot}67
{\cdot}193$
&$2^{42}3^{5}5^{25}7^{4}13
{\cdot}31^{2}67
{\cdot}1783$
&892.6
\\
\hline
\end{tabular}

\bigskip

\caption{}
\label{Table1}
\end{table}

\begin{remark}
Lubotzky's question has been answered affirmatively~\cite{Meiri}.
\end{remark}
 
Another representation $\rho_k: G\rightarrow \Gamma_3$
is defined in \cite[p.~414]{LongReidI} by
\[
\rho_k(x) =  
{
\left(\begin{array}{ccc} 1 & -2 & 3
\\
0 & k & -1-2k
\\
0 & 1 & -2 \end{array} \right)}, \quad \rho_k(y)  
= {
\left(\begin{array}{ccc} -2-k & -1 & 1
\\
-2-k & -2 & 3
\\
-1 & -1 & 2 \end{array} \right),}
\]
\[
\rho_k(z) 
= {
\left(\begin{array}{ccc} 0 & 0 & 1
\\
1 & 0 & -k
\\
0 & 1 & -1-k \end{array} \right)}.
\]

\vspace{1pt}

\noindent If $k\in\{ 0, 2, 3, 4, 5\}$ then $\rho_k(F)$ is arithmetic. 
Since a transvection in each group is known~\cite[p.~419]{LongReidI},
as before we can use $\tt{PrimesForDense}$ to 
find $\Pi$, then $\tt{LevelMaxPCS}$ with 
Theorem~\ref{BothPisAreClose} to compute levels. 
The $\rho_k(F)$ and $\rho_k(G)$ relate to open 
conjectures in \cite[Section~5]{LongReidI}. Table~\ref{Table2} 
solves the main problem, namely finding indices in $\Gamma_3$. 
The last column states the time to compute $|\Gamma_3:\rho_k(G)|$. 

\begin{table}[htb]

\begin{tabular}{|r|r|r|r|r|}
\multicolumn{1}{c}{$k$}
&\multicolumn{1}{c}{Level}
&\multicolumn{1}{c}{$|\Gamma_3 : \rho_k(G)|$}
&\multicolumn{1}{c}{$|\Gamma_3 : \rho_k(F)|$}
&\multicolumn{1}{c}{Time}
\\
\hline
\ $0$ \
&$11$
&$7
{\cdot}19$
& $2
{\cdot}5
{\cdot}7
{\cdot}19$
&6
\\
\ $2$ \
&$2^{2}5
{\cdot}7$
&$2^{12}3^{2}5
{\cdot}7^{2}19
{\cdot}31$
&$2^{12}3^{3}5
{\cdot}7^{2}19
{\cdot}31$
&15.4
\\
\ $3$ \
&$13$
&$2^{2}3
{\cdot}13^{2}61$
&$2^{3}3^{2}13^{2}61$
&7
\\
\ $4$ \
&$3^{3}7$
&$2^{4}3^{11}7^{2}13
{\cdot}19$
&$2^{6}3^{13}7^{2}13
{\cdot}19$
&11.3
\\
\ $5$ \
&$2^{2}19
{\cdot}31$
&$2^{10}3^{3}5
{\cdot}31^{2}127
{\cdot}331$
&$2^{11}3^{5}5
{\cdot}31^{2}127
{\cdot}331$
&49.1
\\
\hline
\end{tabular}

\bigskip

\caption{}
\label{Table2}
\end{table}

\subsection{Monodromy groups}
\label{Subsubsection232}
Let 
$f({\rm x}) = \prod_{j = 1}^{n} 
({\rm x} - a_j) = 
{\rm x}^n + A_{n-1}{\rm x}^{n-1} + \cdots +\abk A_0
$
and
$
g({\rm x}) = \prod_{j = 1}^{n} 
({\rm x} - b_j) = {\rm x}^n + B_{n-1}{\rm x}^{n-1} + \cdots + B_0
$
where $a_j = e^{2\pi{\rm i}\alpha_j}$ and $b_j = e^{2\pi{\rm i}\beta_j}$ 
for $\alpha_j$, $\beta_j\in \mathbb{C}$, $1 \leq j\leq n$.
The group $H$ generated by the companion matrices
\[
A =  {
\left(\begin{array}{cccc} 0 & \cdots & 0 & -A_0
\\
1 & \cdots & 0 & -A_1
\\
\vdots & \ddots & \vdots & \vdots
\\
0 & \cdots & 1 & -A_{n-1}
\end{array} \right)}, \quad
B =  {
\left(\begin{array}{cccc} 0 & \cdots & 0 & -B_0
\\
1 & \cdots & 0 & -B_1
\\
\vdots & \ddots & \vdots & \vdots
\\
0 & \cdots & 1 & -B_{n-1}
\end{array} \right)}
\]
of $f({\rm x})$ and $g({\rm x})$ is the
\textit{hypergeometric group} corresponding to $f({\rm x})$
and $g({\rm x})$. It is the monodromy group of a
hypergeometric ordinary differential equation (see 
\cite[pp.~331--332]{BH}, \cite[p.~334]{BravThomas},
\cite[p.~592]{SinghVenky}). 

Suppose that $f({\rm x})$, $g({\rm x})\in \Z[{\rm x}]$ are reciprocal 
($f({\rm x})= {\rm x}^nf(1/{\rm x})$ and 
$g({\rm x})= {\rm x}^ng(1/{\rm x})$) with no common roots in 
$\C$. Further suppose that they constitute a primitive pair (there do 
not exist $f_1({\rm x})$, $g_1({\rm x}) \in \Z[{\rm x}]$ and $k\geq 2$ 
such that $f({\rm x}) = f_1({\rm x}^k)$ and 
$g({\rm x}) = g_1({\rm x}^k)$). Then $H \leq\Sp(\Phi, \Z)$ for some 
non-degenerate integral symplectic form $\Phi$ on 
$\Z^n$~\cite[p.~592]{SinghVenky}.

There are $14$ pairs $(f({\rm x}), g({\rm x}))$ with 
$g({\rm x}) \in \Z[{\rm x}]$ coprime to  $f({\rm x})=({\rm x}-1)^4$ 
such that the roots of $g({\rm x})$ are roots of 
unity~\cite[pp.~595, 615]{SinghVenky}. 
The group $H=\langle A, B \rangle$ in these cases
is a monodromy group associated with Calabi--Yau threefolds. 
Seven such $H$ are arithmetic, and the 
rest are thin~\cite{BravThomas,Singh,SinghVenky}.
In \cite[p.~175]{MonodromyAppendix}, $H$ is shown to be 
$\GL(4,\Q)$-conjugate to $G(d, k):= \langle U, T \rangle 
\leq \Sp(4, \Z)$ where 
\[
U =  
{
\left(\begin{array}{crrr} 1 & 1 & 0 & \ 0
\\
0 & 1 & 0 & \ 0
\\
d & d & 1 & \ 0
\\
0 & -k & -1 & \ 1
\end{array} \right)},
\quad  T =  
{
\left(\begin{array}{cccc} 
1 & 0 & 0 & 0
\\
0 & 1 & 0 & 1
\\
0 & 0 & 1 & 0
\\
0 & 0 & 0 & 1
\end{array} \right)}.
\]
Note that conjugation preserves
arithmeticity~\cite[p.~87]{Humphreys}. For $d_2 \, | \, d_1$, let  
$\hat{G}(d_1,d_2)$ be the subgroup of $\Sp(4,\Z)$ consisting of all $h$ 
satisfying
\begin{align*}
h \equiv
{
\renewcommand{\arraycolsep}{.2cm}\left(\begin{array}{cccc}
1 & * & * & * \\
0 & * & * & * \\
0 & 0 & 1 & 0 \\
0 & * & * & *
 \end{array} \right) } \hspace{-5pt} \mod d_1 \quad \text{and} \quad 
h \equiv
{
\renewcommand{\arraycolsep}{.2cm}\left(\begin{array}{cccc}
1 & * & * & * \\
0 & 1 & * & * \\
0 & 0 & 1 & 0 \\
0 & 0 & * & 1
 \end{array} \right)} \hspace{-5pt} \mod d_2 .
\end{align*}
If $d_1=d$ and $d_2= \mathrm{gcd}(d,k)$ then $\hat{G}(d_1, d_2)$ 
is an arithmetic subgroup of $\Sp(4, \Z)$ containing $G(d, k)$. 
By \cite[Appendix]{MonodromyAppendix},
\begin{equation}
\label{HatGinSpIndex}
|\Sp(4, \Z) : \hat{G}(d_1, d_2)| = d_1^4 \cdot 
{\textstyle \prod}_{p\mid d_1} (1 - p^{-4}) \cdot
d_2^2 \cdot {\textstyle \prod}_{p\mid d_2}(1 - p^{-2}).
\end{equation}
The overgroup $\hat{G}(d_1, d_2)$ could be used to investigate properties 
of $G(d, k)$, such as bounds on $|\Sp(4, \Z) : G(d, k)|$; 
cf.~\cite[p.~6]{HofStrat}. Our implementation enables us to complete 
such tasks quickly, including those not completed in 
\cite[p.~6]{HofStrat}. Also, for the first time we can determine the 
minimal arithmetic overgroup of $G(d,k)$.

We compute $\Pi(G(d,k))$ via ${\tt PrimesForDense}$ with
input transvection $T$, then the level and index of $G(d,k)$
via ${\tt LevelMaxPCS}$. See Table~\ref{TableX}.

\begin{table}[htb]

\begin{tabular}{|c|c|c|c|r|c|c|}
\multicolumn{1}{c}{$(\alpha_1,\alpha_2)$}&
\multicolumn{1}{c}{$(d,k)$}
&\multicolumn{1}{c}{$M$}
&\multicolumn{1}{c}{Index $G$}
&\multicolumn{1}{c}{Time}
&\multicolumn{1}{c}{Index $\hat G$}
\\
\hline
$(\frac1{10},\frac3{10})$&
$(1,3)$
&$2$
&$6$
&5.8
&1
\\
$(\frac16,\frac16)$&
$(1,2)$
&$2$
&$10$
&5.4
&1
\\
$(\frac16,\frac14)$&
$(2,3)$
&$2^{3}$
&$2^{6}3
{\cdot}5$
&7.1
&$3\cdot 5$
\\
$(\frac16,\frac13)$&
$(3,4)$
&$2^{2}3^{2}$
&$2^{9}3^{5}5^{2}$
&12.6
&$2^4 5$
\\
$(\frac14,\frac14)$&
$(4,4)$
&$2^{6}$
&$2^{20}3^{2}5$
&10.1
&$2^6 3^25$
\\
$(\frac14,\frac13)$&
$(6,5)$
&$2^{3}3^{2}$
&$2^{10}3^{6}5^{2}$
&15.6
&$2^43{\cdot}5^2$
\\
$(\frac13,\frac13)$&
$(9,6)$
&$2
{\cdot}3^{5}$
&$2^{8}3^{14}5^{2}$
&19.2
&$2^7 3^4 5$
\\
$(\frac15,\frac25)$&
$(5,5)$
&$2
{\cdot}5^{3}$
&$2^{8}3^{3}5^{8}13$
&11.9
&$2^7 3^2 13$
\\
$(\frac18,\frac38)$&
$(2,4)$
&$2^{4}$
&$2^{11}3^{2}5$
&7.3
&$3^2 5$
\\
$(\frac1{12},\frac5{12})$&
$(1,4)$
&$2^{2}$
&$2^{5}5$
&5.8
&1
\\
$(\frac12,\frac12)$&
$(16,8)$
&$2^{10}$
&$2^{40}3^{2}5$
&20.1
&$2^{16} 3^2 5$
\\
$(\frac13,\frac12)$&
$(12,7)$
&$2^{5}3^{2}$
&$2^{17}3^{6}5^{2}$
&25
&$2^8 3{\cdot} 5^2$
\\
$(\frac14,\frac12)$&
$(8,6)$
&$2^{7}$
&$2^{24}3^{2}5$
&12.3
&$2^83^2 5$
\\
$(\frac16,\frac12)$&
$(4,5)$
&$2^{5}$
&$2^{13}3
{\cdot}5$
&9.9
&$2^4 3 \cdot 5$
\\
\hline
\end{tabular}

\bigskip

\caption{}
\label{TableX}
\end{table}

The arithmetic $G(d,k)$ appear in rows $1$--$7$. 
For $G(d,k)$ in any other row, we report the level and index 
of its minimal arithmetic overgroup. The first column  
defines $(\alpha_1, \alpha_2, \alpha_3, \alpha_4)$ for $A$, 
as $\alpha_3 = 1 - \alpha_2$ and $\alpha_4 = 1 -\alpha_1$.
`Time' is time to compute the level $M$, `Index $G$' is index 
of the minimal arithmetic overgroup in $\Sp(4, \Z)$, and 
`Index~$\hat{G}$' is $|\Sp(4, \Z) : \hat{G}(d_1, d_2)|$ from 
\eqref{HatGinSpIndex}.

\begin{remark} 
Table~\ref{TableX} shows that $\hat{G}(d_1, d_2)$
need not be the minimal arithmetic overgroup of $G(d, k)$. 
For instance, if $G(d, k)$ is arithmetic then it could differ
from $\hat{G}(d_1,d_2)$. Also note that arithmeticity
of groups of small index could in principle be determined by a 
coset enumeration once generators have been expressed as words 
in generators of $\Sp$.
\end{remark}

\section{Comparison of density testing algorithms}
\label{Miscellaneous}
We now compare our implementations of the density testing algorithms 
suggested in~\cite{Rivin1} and Subsection~\ref{Subsection22}.

$\mathtt{IsDenseIR1}$ is \cite[Algorithm~1]{Rivin1}. It accepts a 
finite subset $S$ of $\G = \SL(n,\Z)$ or $\mathrm{Sp}(n,\Z)$, $n> 2$, 
and tests whether $H = \langle S\rangle$ is dense.
This is a Monte Carlo algorithm based on random choice of elements in 
$H$ that have characteristic polynomial with large Galois group. Such 
elements are ubiquitous, in contrast to unipotent elements.
$\mathtt{IsDenseIR1}$ returns $\mathtt{true}$ if it detects 
non-commuting $g$, $h\in H$ such that $h$ has infinite order, and the 
Galois group of the characteristic polynomial of $g$ is $\mathrm{Sym}(n)$ 
if $\G = \SL(n,\Z)$ or $C_2 \wr \mathrm{Sym}(n/2)$ if 
$\G=\mathrm{Sp}(n,\Z)$. An output message $\mathtt{true}$ means that $H$ 
is dense, whereas $\mathtt{false}$ means that  
suitable $g$, $h$ were not found (in that event, $H$ may still be dense). 
We use an intrinsic {\sf GAP} function to compute Galois groups. Attempts 
at selecting random elements by the default method for finite groups, 
product replacement~\cite{Celleretal}, failed due to entry explosion in 
a precomputation step. So we took random words in the generators of 
length up to $50$. These elements might be of poor quality. 
Indeed, sometimes the algorithm as implemented did not
establish density. For $G_1$ below this happened about 40\% of the time.
The error rate could be reduced by a better choice of random elements,
or by an iteration over more random elements, but at the cost of runtime. 

The algorithm of~\cite[p.~23]{Rivin1}, which we call
$\mathtt{IsDenseIR2}$, is deterministic.
It accepts a finitely generated subgroup $H$ of a semisimple 
algebraic group $\calg(\mathbb{F})$, $\cha\,\mathbb{F} = 0$, and 
tests whether $H$ is finite and whether the adjoint representation of
$H$ in $\GL(m, \mathbb{F})$ is absolutely irreducible, where 
$m$ is the dimension of the Lie algebra of $\calg(\mathbb{F})$.
By incorporating methods from \cite{Recog}, this algorithm can be 
implemented over any field $\mathbb{F}$ of characteristic $0$. 

In Table~\ref{TableY}, $N$ is number of generators, and IR1, IR2, DFH 
are runtimes for our {\sf GAP} implementations of $\mathtt{IsDenseIR1}$, 
$\mathtt{IsDenseIR2}$, $\mathtt{IsDense}$, respectively. 

\begin{table}[htb]

{
\begin{tabular}{|c|c|c|c|c|c|c|}
\multicolumn{1}{c}{Group}
&\multicolumn{1}{c}{$n$}
&\multicolumn{1}{c}{$N$}
&\multicolumn{1}{c}{Output}
&\multicolumn{1}{c}{$\tt IR1$}
&\multicolumn{1}{c}{$\tt IR2$}
&\multicolumn{1}{c}{$\tt DFH$}
\\
\hline
$G_1$
&$5$
&$4$
&$\tt true$
&0.01
&11600
&0.2
\\
$G_2$
&$3$
&$3$
&$\tt true$
&0.02
&0.2
&0.04
\\
$G_3$
&$7$
&$48$
&$\tt true$
&0.05
&$-$
&5
\\
$G_4$
&$3$
&$3$
&$\tt true$
&0.01
&4.2
&0.2
\\
$G_5$
&$3$
&$3$
&$\tt true$
&0.01
&7.2
&0.3
\\
$G_6$
&$3$
&$3$
&$\tt true$
&0.01
&8.4
&0.2
\\
$G_7$
&$5$
&$15$
&$\tt false$
&0.01
&16200
&0.5
\\
$G_8$
&$5$
&$10$
&$\tt false$
&0.01
&24.4
&0.7
\\
$G_9$
&$11$
&$13$
&$\tt false$
&0.01
&$-$
&1.2
\\
\hline
\end{tabular}
}

\bigskip

\caption{}
\label{TableY}
\end{table}

The test groups $G_i$ were selected to vary $n$, $N$, and group structure. 
We know a transvection in each group (often as one of the generators).

$G_1$, $G_2$, $G_3$ are arithmetic. 
$G_1$ is $\SL(5,\Z)$, but not on the canonical generating set of
elementary matrices. The congruence image of
$G_2\leq \SL(3,\Z)$ is a $\{7,79\}$-Hall subgroup of 
$\SL(3, \Z_{2^3 23^2})$. It has level $2^3 23^2$ and index 
$2^24 3^2 11^2 23^{11}$.
$G_3\leq \SL(7,\Z)$ is generated by $E_{7,3^4 5 7^2}$ and 
the block diagonal matrices $\mathrm{diag}(h_1,h_2,1)$ where 
$h_1\in \beta_2(G)$ and $h_2\in \rho_4(G)$ for $G$, 
$\beta_T$, $\rho_k$ as in Subsection~\ref{Subsubsection231}.
It is arithmetic of level $3^85^27^4$.

$G_4$, $G_5$, $G_6$ are the groups generated by the transvections
\begin{eqnarray*}
&T_1 = {
\left(\begin{array}{ccc} 1 & x^2+1 & x
\\
0 & 1 & 0
\\
0 & 0 & 1 \end{array} \right)}, \qquad T_2 = {
\left(\begin{array}{ccc} 1 & 0 & 0
\\
x & 1 & x+1
\\
0 & 0 & 1 \end{array} \right)},
\\
&T_3 = {
\left(\begin{array}{ccc} 1 & 0 & 0
\\
0 & 1 & 0
\\
-x+1 & x^2 & 1 \end{array} \right)}
\end{eqnarray*}
for $x=11$, $99$, $998$ respectively. By \cite{SHumphries},
these groups are free and surject onto $\SL(3, p)$ modulo $p$ for all 
primes $p$ ($\tt PrimesForDense$ tells us that $\Pi(G_i) = \emptyset$ 
too); i.e., they are thin. As these $G_i$ also surject modulo $4$,  
they are congruent to $\SL(3, \Z_m)$ modulo $m$ for $m \geq 2$.

The last three groups are not dense.
$G_7$ is generated by $\mathrm{diag}(h_1,h_2)\in \SL(5,\Z)$ where 
$h_1$, $h_2$ are generators of $\beta_5(G)$, $\SL(2,\Z)$ respectively,
together with the upper triangular elementary matrices.
$G_8$ is the group of $5\times 5$ upper unitriangular matrices. 
$G_9$ is generated by $\mathrm{diag}(h_1,h_2)\in \SL(11,\Z)$ where 
$h_1$, $h_2$ range over generating sets for $\SL(6,\Z)$ and $\SL(5,\Z)$,
respectively, together with five randomly chosen upper unitriangular matrices.

More detail is available \href{http://www.math.colostate.edu/~hulpke/examples/densityex.g}{here}.

We write `$-$' in Table~\ref{TableY} if $\tt{IsDenseIR2}$ did not 
terminate within $12$ hours. This occurred for input of degree greater 
than $5$ (the adjoint representation leads to calculation in an 
$(n^2-1)^2$-dimensional lattice). 
Finally, remember that $\tt IsDense$ facilitates the computation of $\Pi(H)$
for dense input $H$, unlike $\tt IsDenseIR1$ and $\tt IsDenseIR2$. 

\vspace{7pt}

\noindent {\bf Acknowledgments.}
We are indebted to Professors Willem de Graaf, Derek Holt, Martin Kassabov, 
and T.~N.~Venkataramana for their vital assistance.

%

\bibliographystyle{amsplain}

\begin{thebibliography}{10}

\bibitem{Aschbacher84}
M.~Aschbacher, \textit{On the maximal subgroups of the finite classical
groups}, Invent. Math. \textbf{76} (1984), 496--514.

\bibitem{Bass}
H.~Bass, J.~Milnor, and J.-P. Serre, \textit{Solution of the
congruence subgroup problem for {${\rm SL}_{n}\,(n\geq 3)$} and
{${\rm Sp}_{2n}\,(n\geq 2)$}}, Inst. Hautes \'Etudes Sci. Publ.
Math. (1967), no.~33, 59--137.

\bibitem{Beisiegel} B.~Beisiegel,
\textit{Die {A}utomorphismengruppen homozyklischer {$p$}-{G}ruppen},
Arch. Math. (Basel) \textbf{29} (1977), no.~4, 363--366.

\bibitem{BH}
F.~Beukers and G.~Heckman, \textit{Monodromy for the hypergeometric
function $_nF_{n-1}$}, Invent. Math. \textbf{95} (1989), 325--354.

\bibitem{BravThomas}
C.~Brav and H.~Thomas, \textit{Thin monodromy in $\Sp(4)$}, Compositio
Math. \textbf{150} (2014), 333--343.

\bibitem{braydougalholt}
J.~N. Bray, D.~F. Holt, and C.~M. Roney-Dougal, The maximal
  subgroups of the low-dimensional finite classical groups, 
	London Math. Soc. Lecture Note Ser. \textbf{407}, Cambridge University
  Press, Cambridge, 2013.

\bibitem{Breuillard}
E.~Breuillard, \href{http://arxiv.org/abs/1407.3158v2}{Approximate subgroups and super-strong
approximation}.

\bibitem{Celleretal}
F.~Celler, C.R.~Leedham-Green, S.~Murray, A.~C.~Niemeyer,
and E.~A.~O'Brien, \textit{Generating random elements of a finite group}, 
Comm. Algebra \textbf{23} (1995), no.~13, 4931--4948.

\bibitem{MonodromyAppendix}
Y. Chen, Y. Yang, and N. Yui, \textit{Monodromy of Calabi--Yau
differential equations} (with an appendix by Cord Erdenberger), J.
Reine Angew. Math. \textbf{616} (2008), 167--203.

\bibitem{Derksen}
H.~Derksen, E.~Jeandel, and P.~Koiran,
\textit{Quantum automata and algebraic groups},
J. Symbolic Comput. \textbf{39} (2005), 357--371.

\bibitem{Arithm}
A.~S. Detinko, D.~L. Flannery, and A.~ Hulpke, \textit{Algorithms for
arithmetic groups with the congruence subgroup property}, J.
Algebra \textbf{421} (2015), 234--259.

\bibitem{DensityFurther}
\bysame, 
\textit{Strong approximation and experimenting with linear groups}, 
preprint (2016).

\bibitem{Recog}
A.~S. Detinko, D.~L. Flannery, and E.~A. O'Brien,
\textit{Recognizing finite matrix groups over infinite
fields}, J. Symbolic Comput. \textbf{50} (2013), 100--109.

\bibitem{Tits}
\bysame,
\textit{Algorithms for the {T}its alternative and related problems}, 
J. Algebra \textbf{344} (2011), 397--406.

\bibitem{Gap}
The GAP~Group, \href{http://www.gap-system.org}{GAP -- Groups, Algorithms, Programming}.

\bibitem{GSI}
F.~Grunewald and D.~Segal, \textit{Some general algorithms. {I}.
{A}rithmetic groups}, Ann. of Math. (2) \textbf{112} (1980),
no.~3, 531--583.

\bibitem{HahnOMeara}
A.~J. Hahn and O.~T. O'Meara, The classical groups and
{$K$}-theory, Grundlehren der Mathematischen Wissenschaften, vol.
291, Springer-Verlag, Berlin, 1989.

\bibitem{HofStrat}
J.~Hofmann and D.~van Straten, \textit{Some monodromy groups of finite
index in $\Sp_4(\Z)$}, J. Aust. Math. Soc. \textbf{99} (2015), no.~1, 48--62.

\bibitem{hulpkespgens}
A.~Hulpke,  \textit{Computing generators of groups preserving a
bilinear form over residue class rings}, J. Symbolic Comput.
\textbf{50} (2013), 298--307.

\bibitem{gapmatgrp}
A.~Hulpke, The {\sf GAP} package 
\href{http://www.math.colostate.edu/~hulpke/matgrp/}{\tt matgrp}.

\bibitem{Humphreys} J.~E.~Humphreys, Arithmetic groups,
Lecture Notes in Mathematics \textbf{789}, Springer-Verlag, Berlin,
1980.

\bibitem{SHumphries} S.~P.~Humphries, \textit{Free subgroups of $\SL(n, \Z)$, 
$n > 2$, generated by transvections}, J. Algebra
\textbf{116} (1988), 155--162.

\bibitem{kassabovcommunication} M.~Kassabov, private communication.

\bibitem{kleidmanliebeck}
P.~Kleidman and M.~Liebeck, \textit{The subgroup structure of the finite
classical groups}, London Mathematical Society Lecture Note Series,
vol.~129, Cambridge University Press, Cambridge, 1990.

\bibitem{landazuriseitz}
V.~Landazuri and G.~M.~Seitz, \textit{On the minimal degrees of projective
  representations of the finite {C}hevalley groups}, J. Algebra \textbf{32}
  (1974), 418--443.

\bibitem{LongReidI}
D.~D.~Long and A.~W.~Reid, \textit{Small subgroups of $\SL(3, \mathbb{Z})$}, 
Exper. Math.  \textbf{20} (2011),
no.~4, 412--425.

\bibitem{Lubotzky97}
A.~Lubotzky, \textit{One for almost all: generation of $\SL(n,p)$ by subsets 
of $\SL(n,\mathbb{Z})$},
Contemp. Math. \textbf{243}, 125--128, 1999.

\bibitem{Lubotzky86} \bysame,
\emph{Dimension function for discrete groups}, Proceedings of
  {G}roups {S}t {A}ndrews 1985, London Math. Soc. Lecture Note Ser.
  \textbf{121}, Cambridge Univ. Press, Cambridge, 1986, pp.~254--262.

\bibitem{LubotzkyMeiri}
A.~Lubotzky and C.~Meiri, \textit{Sieve methods in group theory I: powers
in linear groups}. J. Amer. Math. Soc. \textbf{25} (2012), no.~4,
1119--1148.

\bibitem{LubotzkySegal}
A.~Lubotzky and D.~Segal, Subgroup growth, Birkh\"{a}user,
Basel, 2003.

\bibitem{Weisfeiler}
C.~Matthews, L.~N.~Vaserstein, and B.~Weisfeiler, \textit{Congruence
properties of Zariski-dense subgroups. I}, Proc. London Math. Soc. (3) 
\textbf{48} (1984), no.~3, 514--532.

\bibitem{Meiri} 
C.~Meiri, \href{http://arxiv.org/abs/1511.07798v1}{Generating pairs for finite index subgroups of 
$\SL(n, \Z)$}.

\bibitem{gaprecog}
M.~Neunh\"{o}ffer, \'{A}.~Seress, et al., The {\sf GAP} package 
\href{http://gap-packages.github.io/recog/}{\tt recog},
\emph{a collection of group recognition methods}.

\bibitem{Rivin1}
I.~Rivin, \href{http://arxiv.org/abs/1312.3009v4}{Large Galois groups with applications to Zariski
density}.

\bibitem{Rivin}
\bysame, \textit{Zariski density and genericity}, 
Int. Math. Res. Not. IMRN 2010, no.~19, 3649--3657.

\bibitem{Sarnak} P.~Sarnak, \textit{Notes on thin matrix groups},
Thin groups and superstrong approximation, 343--362, Math. Sci.
Res. Inst. Publ. \textbf{61}, Cambridge Univ. Press, Cambridge, 2014.

\bibitem{Singh}
S.~Singh, \href{http://arxiv.org/abs/1308.4039v6}{Arithmeticity of four hypergeometric monodromy
groups} associated to Calabi--Yau threefolds.

\bibitem{SinghVenky}
S.~Singh and T.~Venkataramana, \textit{Arithmeticity of certain
symplectic hypergeometric groups}, Duke Math. J. \textbf{163}
(2014), no.~3, 591--617.

\bibitem{VenkySoifer}
G.~A.~Soifer and T.~N.~Venkataramana,  \textit{Finitely generated
profinitely dense free groups in higher rank semi-simple groups},
Transform. Groups \textbf{5} (2000), no.~1, 93--100.

\bibitem{Venky87} T.~N. Venkataramana,
\textit{Zariski dense subgroups of arithmetic groups}, J. Algebra
\textbf{108} (1987), 325--339.

\bibitem{Weigel1}
T.~Weigel, \textit{On a certain class of Frattini extensions of finite
Chevalley groups},
Groups of Lie type and their geometries ({C}omo, 1993),
281--288, London Math. Soc. Lecture Note Ser.
\textbf{207}, Cambridge Univ. Press, Cambridge, 1995.

\bibitem{Weigel2}
\bysame, \textit{On the profinite completion of arithmetic groups
of split type},
Lois d'alg\`ebres et vari\'et\'es alg\'ebriques ({C}olmar,
1991), 79--101,  Travaux en Cours, \textbf{50},
Hermann, Paris, 1996.

\bibitem{ZalesSeregkin}
A.~E.~Zalesskii, V.~N.~Sere\v{z}kin, \textit{Linear groups generated
by transvections}, Izv. Akad. Nauk SSSR Ser. Mat. \textbf{40} (1976), no.~1, 
26--49 (Russian).

\end{thebibliography}

\end{document}